\newtheorem{theorem}{Theorem}[section]
\newtheorem{lemma}[theorem]{Lemma}
\newtheorem{proposition}[theorem]{Proposition}
\newtheorem{corollary}[theorem]{Corollary}
\newtheorem{definition}[theorem]{Definition}
\newtheorem{remark}[theorem]{Remark}
\numberwithin{equation}{section}
\newtheorem{assumption}{Assumption}
\newcommand{\R}{{\mathbb{R}}}
\newcommand{\N}{{\mathbb{N}}}
\newcommand{\intcc}[1]{\ensuremath{{\left[#1\right]}}}
\newcommand{\intoc}[1]{\ensuremath{{\left]#1\right]}}}
\newcommand{\intco}[1]{\ensuremath{{\left[#1\right[}}}
\newcommand{\intoo}[1]{\ensuremath{{\left]#1\right[}}}
\newcommand{\as}{\overset{\ra}{s}}
\newcommand{\st}{{\rm s.t.}}
\newcommand{\KL}{\mathcal{KL}}
\newcommand{\Kinf}{\mathcal{K}_\infty}
\newcommand{\Rp}{\R_{\ge0}}
\newcommand{\eps}{\varepsilon}
\newcommand{\da}{\downarrow}
\DeclareMathOperator{\diff}{d}
\DeclareMathOperator{\im}{im}
\DeclareMathOperator{\ke}{ker}
\newcommand{\ra}{\rightarrow}
\newcommand{\sigalg}{\mathcal{F}}
\newcommand{\filtration}{\mathds{F}}
\newcommand{\ol}{\overline}
\newcommand{\Let}{:=}
\newcommand{\EE}{\mathds{E}}
\newcommand{\PP}{\mathds{P}}
\begin{document}

\begin{abstract}
In this paper we propose a compositional framework for the construction of approximations of the interconnection of a class of stochastic hybrid systems. As special cases, this class of systems includes both jump linear stochastic systems and linear stochastic hybrid automata. In the proposed framework, an approximation is itself a stochastic hybrid system, which can be used as a replacement of the original stochastic hybrid system in a controller design process. We employ a notion of so-called stochastic simulation function to quantify the error between the approximation and the original system. In the first part of the paper, we derive sufficient conditions which facilitate the compositional quantification of the error between the interconnection of stochastic hybrid subsystems and that of their approximations using the quantified error between the stochastic hybrid subsystems and their corresponding approximations. In particular, we show how to construct stochastic simulation functions for approximations of interconnected stochastic hybrid systems using the stochastic simulation function for the approximation of each component. In the second part of the paper, we focus on a specific class of stochastic hybrid systems, namely, jump linear stochastic systems, and propose a constructive scheme to determine approximations together with their stochastic simulation functions for this class of systems. Finally, we illustrate the effectiveness of the proposed results by constructing an approximation of the interconnection of four jump linear stochastic subsystems in a compositional way.
\end{abstract}

\title[Approximations of Stochastic Hybrid Systems: A Compositional Approach]{Approximations of Stochastic Hybrid Systems: A Compositional Approach}

\author[M. Zamani]{Majid Zamani$^1$} 
\author[M. Rungger]{Matthias Rungger$^1$}
\author[P. Mohajerin Esfahani]{Peyman Mohajerin Esfahani$^2$} 
\address{$^1$Department of Electrical and Computer Engineering, Technische Universit\"at M\"unchen, D-80290 Munich, Germany.}
\email{\{zamani,matthias.rungger\}@tum.de}
\urladdr{http://www.hcs.ei.tum.de}
\address{$^2$Automatic Control Laboratory at ETH Z\"{u}rich and the Risk Analytics and Optimization Chair at ETH Lausanne, Switzerland.}
\email{mohajerin@control.ee.ethz.ch}
\urladdr{http://control.ee.ethz.ch/$\sim$peymanm}

\maketitle

\section{Introduction}
Stochastic hybrid systems are a general class of dynamical systems consisting of continuous and discrete dynamics subject to probabilistic noise and events. In the past few years, this class of systems has become ubiquitous in many different fields due to the need for a rigorous modeling framework for many safety-critical applications. Examples of those applications include air traffic control \cite{glover}, biochemistry \cite{singh}, communication networks \cite{hespanha}, and systems biology \cite{hu}. The design of controllers to enforce certain given complex specifications, e.g. those expressed via formulae in linear temporal logic (LTL) \cite{katoen08}, in a reliable and cost effective way is a grand challenge in the study of many of those safety-critical applications. One promising direction to achieve those objectives is the use of simpler (in)finite approximations of the given systems as a replacement in the controller design process. Those approximations allow us to design controllers for them and then refine the controllers to the ones for the concrete complex systems, while provide us with the quantified errors in this detour controller synthesis scheme. 

In the past few years there have been several results on the (in)finite approximations of {\em continuous-time} stochastic (hybrid) systems. Existing results include the construction of finite approximations for 
stochastic dynamical systems under contractivity assumptions \cite{abate}, restricted to models with no control inputs, a finite Markov decision process approximation of a linear stochastic control system \cite{LAB09}, however without a quantitative relationship between approximation and concrete model, and the construction of finite bisimilar abstractions for stochastic control systems \cite{majid8,majid10}, for stochastic switched systems \cite{majid18}, for randomly switched stochastic systems \cite{majid11}, and the construction of sound finite abstractions for stochastic control systems without any stability property \cite{majid7}. Further, the results in \cite{julius1} check the relationship between infinite approximations and a given class of stochastic hybrid systems via a notion of stochastic (bi)simulation functions. However, the results in \cite{julius1} do not provide any approximations and moreover appear to be computationally intractable in the case of systems with inputs because one requires to solve a game in order to quantify the approximation error. Note that all the proposed results in
\cite{abate,LAB09,majid8,majid10,majid18,majid11,majid7,julius1} take a monolithic view of continuous-time stochastic (hybrid) systems, where the entire system is approximated. This monolithic view interacts
badly with the construction of approximations, whose complexity grows (possibly exponentially) in the number of continuous state variables in the model.

In this paper, we provide a compositional framework for the construction
of \emph{infinite} approximations of the interconnection of a class of stochastic hybrid
systems, in which the continuous dynamics are modeled by stochastic differential equations
and the switches are modeled as Poisson processes. As special cases, this class of systems includes both jump linear stochastic systems (JLSS) and
linear stochastic hybrid automata \cite{julius1}. Our approximation framework is based on a new notion of stochastic
simulation functions. In this framework, an approximation, which is itself a stochastic hybrid system (potentially with lower dimension and simpler interconnection topology), acts as a substitute in the controller design process. The stochastic simulation function
is used to quantify the error in this detour controller synthesis scheme. Although an approximation in our framework might not be directly amenable to
algorithmic synthesis methods based on automata-theoretic concepts \cite{MalerPnueliSifakis95} which require finite approximations, our approach facilitates the construction
of potentially lower-dimensional less-interconnected stochastic hybrid systems as approximations and, hence, can be interpreted as the first pre-processing step in
the construction of a finite approximation.

In the first part of the paper, we derive sufficient small-gain type conditions, similar to the ones in~\cite{DIW11}, under which one can quantify the error between the interconnection of stochastic hybrid subsystems and that of their approximations in a compositional way by using the errors between stochastic hybrid subsystems and their approximations. 
In the second part of the paper, we focus on JLSS and propose a computational scheme to construct infinite approximations of this class of systems, together with the corresponding stochastic simulation functions. To show the effectiveness of the proposed results, we construct an approximation (two disjoint 3 dimensional JLSS) of the interconnection of four JLSS (overall 10 dimensions) in a compositional way and then use the approximation in order to design a safety controller for the original interconnected system. Note that the controller synthesis would not have been possible without the use of the approximation.

The recent work in \cite{majid17} provides a compositional scheme for the construction of infinite approximations of interconnected deterministic control systems without any hybrid dynamic. The results in this paper are complementary to the ones in \cite{majid17} as we extend our focus to the class of stochastic hybrid systems. A preliminary investigation of our results on the compositional construction of infinite approximations of interconnected stochastic hybrid systems appeared in \cite{majid14}. In this paper we
present a detailed and mature description of the results announced in \cite{majid14}, including proposing a new notion of stochastic simulation functions which is computationally more tractable in the case of systems with inputs and providing constructive means to
compute approximations of JLSS.

\section{Stochastic Hybrid Systems}
\subsection{Notation}
We denote by $\N$ the set of nonnegative integer numbers and by
$\R$ the set of real numbers.
We annotate those symbols with subscripts to restrict them in
the obvious way, e.g. $\R_{>0}$ denotes the positive real numbers. The symbols $I_n$, $0_n$, and $0_{n\times{m}}$ denote the identity matrix, zero vector, and zero matrix in $\R^{n\times{n}}$, $\R^n$, and $\R^{n\times{m}}$, respectively. For $a,b\in\R$ with $a\le b$, we denote the closed, open, and half-open intervals in $\R$ by $\intcc{a,b}$,
$\intoo{a,b}$, $\intco{a,b}$, and $\intoc{a,b}$, respectively. For $a,b\in\N$ and $a\le b$, we
use $\intcc{a;b}$, $\intoo{a;b}$, $\intco{a;b}$, and $\intoc{a;b}$ to
denote the corresponding intervals in $\N$.
Given $N\in\N_{\geq1}$, vectors $x_i\in\R^{n_i}$, $n_i\in\N_{\geq1}$ and $i\in\intcc{1;N}$, we
use $x=[x_1;\ldots;x_N]$ to denote the vector in $\R^n$ with
$n=\sum_{i=1}^N n_i$. Similarly, we use $X=[X_1;\ldots;X_N]$ to denote the matrix in $\R^{n\times m}$ with
$n=\sum_{i=1}^N n_i$, given $N\in\N_{\geq1}$, matrices $X_i\in\R^{n_i\times m}$, $n_i\in\N_{\geq1}$, and $i\in\intcc{1;N}$. Given a vector \mbox{$x\in\mathbb{R}^{n}$}, we denote by $\Vert x\Vert$ the Euclidean norm of $x$. The distance of a point $x\in\R^n$ to a set $\mathsf{D}\subseteq\R^n$ is defined as $\Vert x\Vert_{\mathsf{D}}=\inf_{d\in \mathsf{D}}\Vert x-d\Vert$. Given a matrix $P=\{p_{ij}\}\in\R^{n\times{n}}$, we denote by $\text{Tr}({P})=\sum_{i=1}^np_{ii}$ the trace of $P$. 

Given a function $f:\R^n\rightarrow \R^m$ and $\bar x\in\R^m$, we use
$f\equiv \bar x$ to denote that $f(x)=\bar x$ for all $x\in\R^n$. If
$x$ is the zero vector, we simply write $f\equiv 0$. Given a function \mbox{$f:\mathbb{R}_{\geq0}\rightarrow\mathbb{R}^n$}, the (essential) supremum of $f$ is denoted by $\Vert f\Vert_{\infty} \Let \text{(ess)sup}\{\Vert f(t)\Vert,t\geq0\}$. Measurability throughout this paper refers to Borel measurability. A continuous function \mbox{$\gamma:\mathbb{R}_{\geq0}\rightarrow\mathbb{R}_{\geq0}$}, is said to belong to class $\mathcal{K}$ if it is strictly increasing and \mbox{$\gamma(0)=0$}; $\gamma$ is said to belong to class $\mathcal{K}_{\infty}$ if \mbox{$\gamma\in\mathcal{K}$} and $\gamma(r)\rightarrow\infty$ as $r\rightarrow\infty$. A continuous function $\beta:\mathbb{R}_{\geq0}\times\mathbb{R}_{\geq0}\rightarrow\mathbb{R}_{\geq0}$ is said to belong to class $\mathcal{KL}$ if, for each fixed $t$, the map $\beta(r,t)$ belongs to class $\mathcal{K}$ with respect to $r$ and, for each fixed nonzero $r$, the map $\beta(r,t)$ is decreasing with respect to $t$ and $\beta(r,t)\rightarrow 0$ as \mbox{$t\rightarrow\infty$}.

\subsection{Stochastic hybrid systems}
Let $(\Omega, \sigalg, \PP)$ be a probability space endowed with a filtration $\filtration = (\sigalg_s)_{s\geq 0}$ satisfying the usual conditions of completeness and right continuity \cite[p.\ 48]{ref:KarShr-91}. Let $(W_s)_{s \ge 0}$ be a $\widetilde{p}$-dimensional $\filtration$-Brownian motion and $(P_s)_{s\ge0}$ be a $\widetilde{q}$-dimensional $\filtration$-Poisson process. We assume that the Poisson process and the Brownian motion are independent of each other. The Poisson process $P_s\Let\left[P_s^1;\ldots;P_s^{\widetilde q}\right]$ model $\widetilde{q}$ kinds of events whose occurrences are assumed to be independent of each other.

\begin{definition}
\label{Def_control_sys}The class of \emph{stochastic hybrid systems} with which we deal in this paper is the tuple $\Sigma=\left(\R^n,\R^m,\R^p,\mathcal{U},\mathcal{W},f,\sigma,r,\R^q,h\right)$, where $\R^n$, $\R^m$, $\R^p$, and $\R^q$ are the state, external input, internal input, and output spaces, respectively, and
\begin{itemize}
%\item $\R^{n}$ is the state space;
%\item $\R^m$ is the external input space; 
%\item $\R^p$ is the internal input space; 
\item $\mathcal{U}$ is a subset of the set of all $\filtration$-progressively measurable processes with values in $\R^m$; see \cite[Def. 1.11]{ref:KarShr-91}; 
\item $\mathcal{W}$ is a subset of the set of all $\filtration$-progressively measurable processes with values in $\R^p$; 
\item $f:\R^n\times\R^m\times\R^p\rightarrow\R^n$ is the drift term which is globally Lipschitz continuous: there exist constants $L_x,L_u,L_w\in\R_{\geq0}$ such that: $\Vert f(x,u,w)-f(x',u',w')\Vert\leq L_x\Vert x-x'\Vert +  L_u\Vert u-u'\Vert+L_w\Vert w-w'\Vert$ for all $x,x'\in\R^n$, all $u,u'\in\R^m$, and all $w,w'\in\R^p$;
\item $\sigma:\R^n\rightarrow\R^{n\times{\widetilde{p}}}$ is the diffusion term which is globally Lipschitz continuous;
\item $r:\R^n\rightarrow\R^{n\times{\widetilde q}}$ is the reset function which is globally Lipschitz continuous;
%\item $\R^q$ is the output space;
\item $h:\R^n\rightarrow\R^q$ is the output map. 
\end{itemize}
\end{definition}

A stochastic hybrid system $\Sigma$ satisfies
%\begin{small}
\begin{align}\label{eq0}
\Sigma:\left\{\hspace{-2mm}\begin{array}{l}\diff \xi(t)=f(\xi(t),\nu(t),\omega(t))\diff t+\sigma(\xi(t))\diff W_t+r(\xi(t))\diff P_t,\\\hspace{2mm}\zeta(t)=h(\xi(t)),\end{array}\right.
\end{align}
%\end{small}
$\PP$-almost surely ($\PP$-a.s.) for any $\nu\in\mathcal{U}$ and any $\omega\in\mathcal{W}$, where stochastic process \mbox{$\xi:\Omega \times \R_{\geq0} \rightarrow \R^n$} is called a \textit{solution process} of $\Sigma$ and stochastic process \mbox{$\zeta:\Omega \times \R_{\geq0} \rightarrow \R^q$} is called an \textit{output trajectory} of $\Sigma$. We call the tuple $(\xi,\zeta,\nu,\omega)$ a \emph{trajectory}
of $\Sigma$, consisting of a solution process $\xi$, an output
trajectory $\zeta$, and input trajectories
$\nu$ and $\omega$, that satisfies \eqref{eq0} $\PP$-a.s..
We also write $\xi_{a \nu\omega}(t)$ to denote the value of the solution process at time $t\in\R_{\geq0}$ under the input trajectories $\nu$ and $\omega$ from initial condition $\xi_{a \nu\omega}(0) = a$ $\PP$-a.s., in which $a$ is a random variable that is $\sigalg_0$-measurable. We denote by $\zeta_{a\nu\omega}$ the output trajectory corresponding to the solution process $\xi_{a \nu\omega}$. Here, we assume that the Poisson processes $P_s^i$, for any $i\in[1;\widetilde q]$, have the rates of $\lambda_i$. We emphasize that the postulated assumptions on $f$, $\sigma$, and $r$ ensure existence, uniqueness, and strong Markov property of the solution processes \cite{borkar1989}. 

\begin{remark}
We refer the interested readers to Section IV in \cite{julius1} showing how one can cast linear stochastic hybrid automata (LSHA) as jump linear stochastic systems (JLSS) (c.f. Section \ref{s:lin}) which are a specific class of the ones introduced in Definition \ref{Def_control_sys}. 
\end{remark}

\section{Stochastic Simulation Function}
Before introducing the notion of stochastic simulation functions, we first need to define the infinitesimal generator of a stochastic process.
\begin{definition}
Let
$\Sigma=(\R^n,\R^m,\R^p,\mathcal{U},\mathcal{W},f,\sigma,r,\R^q,h)$ and
$\hat\Sigma=(\R^{\hat n},\R^{\hat m},\R^{\hat p},\hat{\mathcal{U}},\hat{\mathcal{W}},\hat f,\hat\sigma,\hat r,\R^{\hat q},\hat h)$ be
two stochastic hybrid systems with solution processes $\xi$ and $\hat\xi$, respectively. Consider a twice continuously differentiable function \mbox{$V:\R^{n}\times \R^{\hat n}\to\R_{\ge0}$}. The infinitesimal generator of the stochastic process $\Xi=[\xi;\hat{\xi}]$, denoted by $\mathcal{L}$, acting on function $V$ is defined in \cite[Section 1.3]{ref:Oks-jump} as:
 \begin{align}\label{generator}
	\mathcal{L} V\left(x, \hat{x}\right)\Let& \left[\partial_xV~~\partial_{\hat{x}}V\right] \begin{bmatrix} f(x,u,w)\\\hat{f}\left(\hat{x},\hat{u},\hat w\right)\end{bmatrix}+\frac{1}{2} \text{Tr} \left(\begin{bmatrix} \sigma(x) \\ \hat\sigma\left(\hat{x}\right) \end{bmatrix}\left[\sigma^T(x)~~\hat\sigma^T\left(\hat{x}\right)\right] \begin{bmatrix}
\partial_{x,x} V & \partial_{x,\hat{x}} V \\ \partial_{\hat{x},x} V & \partial_{\hat{x},\hat{x}} V
\end{bmatrix}\right)\\\notag&+\sum_{i=1}^{\widetilde q}\lambda_i\left(V\left(x+r(x)\mathsf{e}_i,\hat{x}+\hat{r}(\hat{x})\mathsf{e}_i\right)-V(x,\hat{x})\right),
\end{align} 
for every $x\in\R^n$, $\hat x\in\R^{\hat n}$, $u\in\R^m$, $\hat u\in\R^{\hat m}$, $w\in\R^p$, and $\hat w\in\R^{\hat p}$.
\end{definition}

Now, we introduce a notion of stochastic simulation functions, inspired by the notion of simulation function
in~\cite{majid17}, for deterministic control systems distinguishing the role of internal and external inputs.

\begin{definition}\label{d:sf} Let
$\Sigma=(\R^n,\R^m,\R^p,\mathcal{U},\mathcal{W},f,\sigma,r,\R^q,h)$ and
$\hat\Sigma=(\R^{\hat n},\R^{\hat m},\R^p,\hat{\mathcal{U}},\mathcal{W},\hat f,\hat\sigma,\hat r,\R^q,\hat h)$ be
two stochastic hybrid systems with the same internal input and output space
dimension. A twice continuously differentiable 
function \mbox{$V:\R^{n}\times \R^{\hat n}\to\R_{\ge0}$} is
called a \emph{stochastic simulation function from $\hat\Sigma$ to $\Sigma$ in the $k$th moment} (SSF-M$_k$), where $k\geq1$, if it has polynomial growth rate and 
%there exists a measurable function $\nu_{\hat \nu}:\R^n\times\R^{\hat n}\times\R^{\hat m}\times\R^p\to\R^{m}$ which is globally Lipschitz continuous in the first argument such that 
for any $x\in\R^n$ and $\hat x\in\R^{\hat n}$ one has
\begin{IEEEeqnarray}{c}\label{e:sf:1}
\alpha(\Vert h(x)-\hat h(\hat x)\Vert^k)\le V(x,\hat x),
\end{IEEEeqnarray}
and $\forall \hat u\in\R^{\hat m}$ $\forall\hat w\in\R^p$ $\exists u\in\R^m$ $\forall w\in\R^p$ one obtains
%\begin{small}
\begin{align}\label{inequality1} 
	\mathcal{L} V\left(x, \hat{x}\right)	 \leq-\eta (V\left(x,\hat{x}\right))+\rho_{\mathrm{ext}}(\Vert\hat
u\Vert^k)+\rho_{\mathrm{int}}(\Vert w-\hat w\Vert^k),
\end{align} 
%\end{small}
for some $\mathcal{K}_\infty$ functions
$\alpha,\eta,\rho_{\mathrm{ext}},\rho_{\mathrm{int}}$, where $\mathsf{e}_i\in\R^{\widetilde q}$ denotes the vector with $1$ in the $i$th coordinate and $0$'s elsewhere, $\alpha,\eta$ are convex functions, and $\rho_{\mathrm{ext}},\rho_{\mathrm{int}}$ are concave ones.

%A SSF-M$_k$ function $V$ from $\hat\Sigma$ to $\Sigma$ is called a \emph{stochastic bisimulation function between $\hat\Sigma$ and $\Sigma$ in the $k$th moment} (SBF-M$_k$) if for every
%$x\in\R^n$, $\hat x\in\R^{\hat n}$, $u\in\R^{m}$, $w\in\R^q$, there exists $\hat u\in\R^{\hat m}$ such that for all $\hat w\in\R^q$ ineuqlity \eqref{inequality1} holds.
\end{definition}

In the above definition, the symbols $\partial_x$ and $\partial_{x,\hat{x}}$ denote the first and the second order partial derivatives with respect to $x$ and $x$ and $\hat{x}$, respectively.

We say that a stochastic hybrid system $\hat\Sigma$ is \emph{approximately alternatingly simulated in the $k$th moment} by a stochastic hybrid
system $\Sigma$ or $\Sigma$ \emph{approximately alternatingly simulates in the $k$th moment}
$\hat\Sigma$, denoted by $\hat\Sigma\preceq_{\mathcal{AS}}^k\Sigma$, if
there exists a SSF-M$_k$ function $V$ from $\hat\Sigma$ to
$\Sigma$. We call $\hat \Sigma$ an
\emph{abstraction} of $\Sigma$. 
%We say that a stochastic hybrid system $\hat\Sigma$ is \emph{approximately alternatingly bisimilar in the $k$th moment} to a stochastic hybrid
%system $\Sigma$, denoted by $\hat\Sigma\cong_{\mathcal{AS}}^k\Sigma$, if
%there exists a SSF-M$_k$ function $V$ from $\hat\Sigma$ to
%$\Sigma$ and there exists a SSF-M$_k$ function $\widetilde V$ from $\Sigma$ to
%$\hat\Sigma$.

%\begin{remark}
%Note that we do not necessarily require a function $V$ to be a SSF-M$_k$ function from $\hat\Sigma$ to $\Sigma$ and from $\Sigma$ to $\hat\Sigma$ in order to have $\hat\Sigma\cong_{\mathcal{AS}}^k\Sigma$. The next result shows that given a SSF-M$_k$ function $V$ from $\hat\Sigma$ to $\Sigma$, one can always quantify the error between their outputs. Therefore, in the case of $\hat\Sigma\cong_{\mathcal{AS}}^k\Sigma$, two different SSF-M$_k$ functions simply imply two different error bounds between the outputs of $\Sigma$ and $\hat\Sigma$ in which one can always take the maximum of them as the unified error bound between those outputs.
%\end{remark}

\begin{remark}
Note that the notion of SSF-M$_k$ here is different from the notion of stochastic simulation function in \cite[Definition 2]{julius1} requiring the existences of a supermartingale function \cite[Appendix C]{oksendal} whose construction is computationally intractable in the case of (even linear) systems with inputs because one requires to solve a game to compute this function. On the other hand, the notion of stochastic (bi)simulation function in \cite{julius1} is stronger than the notion of SSF-M$_k$ as it provides a lower bound on the probability of satisfaction of specifications for which satisfiability can be obtained at all time instances rather than for a bounded time horizon (cf. Proposition \ref{lemma6}) or at single time instances (cf. Proposition \ref{lemma7}). We refer the interested readers to Subsection V.B in \cite{majid8} for more detailed information about those differences in satisfiability.
\end{remark}

\begin{remark}
If the drift, diffusion, and reset terms in $\Sigma$ and $\hat\Sigma$ in Definition \ref{d:sf} are polynomial, one can use some sum of squares based semidefinite programing tools, such as SOSTOOLS \cite[Subsection 4.2]{antonis}, in order to efficiently search for a (sum of squares) SSF-M$_k$ function from $\hat\Sigma$ to $\Sigma$ which may not exist in general.
\end{remark}

The following theorem shows the importance of the existence of a SSF-M$_k$ function by quantifying the error between the behaviors of $\Sigma$ and the ones of its abstraction $\hat \Sigma$.
\begin{theorem}\label{theorem1}
Let 
$\Sigma=(\R^n,\R^m,\R^p,\mathcal{U},\mathcal{W},f,\sigma,r,\R^q,h)$ and
$\hat\Sigma=(\R^{\hat n},\R^{\hat m},\R^p,\hat{\mathcal{U}},\mathcal{W},\hat f,\hat\sigma,\hat r,\R^q,\hat h)$.
Suppose $V$ is an SSF-M$_k$ function from $\hat\Sigma$ to $\Sigma$. Then, there
exist a $\mathcal{KL}$ function $\beta$ and $\mathcal{K}_\infty$ functions
$\gamma_{\mathrm{ext}}$, $\gamma_{\mathrm{int}}$ such that 
for any
$\hat\nu\in\hat{\mathcal{U}}$, any $\hat \omega \in\mathcal{W}$, and any random variable $a$ and $\hat{a}$ that are $\sigalg_0$-measurable\footnote{Note that $\sigalg_0$ may be the trivial sigma-algebra, i.e., 
$a$ and $\hat{a}$ are deterministic initial conditions.}, there exists $\nu\in{\mathcal{U}}$ such that for all $\omega\in\mathcal{W}$ the following inequality holds:
%\begin{IEEEeqnarray}{rCl}\label{inequality}
%\begin{IEEEeqnarraybox}[][c]{l}
\begin{align}
\label{inequality} 
	\EE[\Vert\zeta_{a\nu\omega}(t) -\hat \zeta_{\hat a \hat \nu\hat\omega}(t)\Vert^k] \le \beta\left(\EE[V(a,\hat a)],t\right ) 
	+ \gamma_{\mathrm{ext}}(\EE[\Vert\hat \nu\Vert^k_\infty]) + \gamma_{\mathrm{int}}(\EE[\Vert\omega-\hat \omega\Vert^k_\infty]).
%\le\\\notag\max\left\{\alpha^{-1}(V(x,\hat x)),\alpha^{-1}(\eta^{-1}\left(\rho(||\hat \nu||_\infty\right)))\right\},\label{inequality}
%\end{IEEEeqnarraybox}
%\end{IEEEeqnarray}
\end{align} 
\end{theorem}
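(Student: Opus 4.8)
The plan is to show that the stochastic simulation function $V$, when evaluated along paired trajectories of $\Sigma$ and $\hat\Sigma$, behaves like a supermartingale-with-drift whose expectation can be controlled, and then to translate the resulting bound on $\EE[V]$ into a bound on $\EE[\|\zeta-\hat\zeta\|^k]$ via the lower bound \eqref{e:sf:1}. The starting point is the input-selection structure in Definition~\ref{d:sf}: for the given $\hat\nu\in\hat{\mathcal U}$ and $\hat\omega\in\mathcal W$, the existential ``$\exists u$'' clause in \eqref{inequality1} lets me select, at each $(x,\hat x,\hat u,\hat w)$, an external input value $u$ so that the generator inequality holds uniformly over $w$. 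I would first argue that this pointwise selection yields a progressively measurable process $\nu\in\mathcal U$ (a measurable-selection argument), so that the paired process $\Xi=[\xi_{a\nu\omega};\hat\xi_{\hat a\hat\nu\hat\omega}]$ is well defined for every $\omega\in\mathcal W$ and satisfies, $\PP$-a.s.,
\begin{align}\label{e:gendrift}
\mathcal L V(\xi(t),\hat\xi(t))\le -\eta\big(V(\xi(t),\hat\xi(t))\big)+\rho_{\mathrm{ext}}(\|\hat\nu(t)\|^k)+\rho_{\mathrm{int}}(\|\omega(t)-\hat\omega(t)\|^k).
\end{align}

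Next I would invoke Dynkin's formula (justified by the polynomial growth rate of $V$ together with the global Lipschitz assumptions on $f,\sigma,r$, which give the needed moment bounds on the solution processes) to obtain
\begin{align}\label{e:dynkin}
\EE[V(\xi(t),\hat\xi(t))]-\EE[V(a,\hat a)]=\EE\Big[\int_0^t \mathcal L V(\xi(s),\hat\xi(s))\,\diff s\Big],
\end{align}
and then substitute \eqref{e:gendrift}. Writing $\upsilon(t)\Let\EE[V(\xi(t),\hat\xi(t))]$ and using Jensen's inequality on the convex $\eta$ and the concave $\rho_{\mathrm{ext}},\rho_{\mathrm{int}}$, I expect to arrive at a differential inequality of the form $\dot\upsilon(t)\le -\eta(\upsilon(t))+\rho_{\mathrm{ext}}(\EE[\|\hat\nu\|_\infty^k])+\rho_{\mathrm{int}}(\EE[\|\omega-\hat\omega\|_\infty^k])$, where I have also used that the essential-supremum norms dominate the instantaneous values. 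A comparison-principle / standard ISS-Lyapunov argument for this scalar differential inequality then produces a $\mathcal{KL}$ function $\beta_0$ and $\mathcal K_\infty$ functions $\hat\gamma_{\mathrm{ext}},\hat\gamma_{\mathrm{int}}$ with
\begin{align}\label{e:vbound}
\upsilon(t)\le \beta_0(\upsilon(0),t)+\hat\gamma_{\mathrm{ext}}(\EE[\|\hat\nu\|_\infty^k])+\hat\gamma_{\mathrm{int}}(\EE[\|\omega-\hat\omega\|_\infty^k]).
\end{align}

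Finally I would combine \eqref{e:vbound} with \eqref{e:sf:1}: taking expectations of $\alpha(\|h(\xi(t))-\hat h(\hat\xi(t))\|^k)\le V(\xi(t),\hat\xi(t))$ and applying Jensen (convexity of $\alpha$) gives $\alpha(\EE[\|\zeta_{a\nu\omega}(t)-\hat\zeta_{\hat a\hat\nu\hat\omega}(t)\|^k])\le \upsilon(t)$, so composing with $\alpha^{-1}$ and distributing the three terms through the $\mathcal K_\infty$ map (using a standard sub-additivity splitting such as $\alpha^{-1}(r_1+r_2+r_3)\le \alpha^{-1}(3r_1)+\alpha^{-1}(3r_2)+\alpha^{-1}(3r_3)$) yields \eqref{inequality} with $\beta(r,t)\Let\alpha^{-1}(3\beta_0(r,t))$, $\gamma_{\mathrm{ext}}\Let\alpha^{-1}(3\hat\gamma_{\mathrm{ext}}(\cdot))$, and $\gamma_{\mathrm{int}}\Let\alpha^{-1}(3\hat\gamma_{\mathrm{int}}(\cdot))$.

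\textbf{The main obstacle} I anticipate is the measurability and admissibility of the selected input $\nu$: the definition only guarantees a \emph{pointwise} existence of $u$ for each $(\hat u,\hat w)$, and promoting this to a genuine $\filtration$-progressively measurable process in $\mathcal U$ that simultaneously makes \eqref{e:gendrift} hold for \emph{all} $\omega\in\mathcal W$ requires a careful measurable-selection theorem and an argument that the resulting feedback does not destroy existence/uniqueness of the coupled SDE. A secondary technical point is rigorously justifying Dynkin's formula \eqref{e:dynkin} despite $V$ being only polynomially bounded rather than compactly supported, which is where the polynomial-growth hypothesis on $V$ and the Lipschitz-induced moment estimates on $\xi,\hat\xi$ are essential.
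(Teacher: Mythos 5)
Your proposal follows essentially the same route as the paper's proof: It\^{o}/Dynkin's formula justified by the polynomial growth of $V$, Jensen's inequality exploiting the convexity of $\eta$ and concavity of $\rho_{\mathrm{ext}},\rho_{\mathrm{int}}$ to reduce to a scalar comparison inequality for $\EE[V(\xi(t),\hat\xi(t))]$, the comparison lemma (the paper's Lemma~\ref{lem:KL}, kept in integral rather than differential form since $y$ is only assumed continuous), and finally the conversion through $\alpha^{-1}$ with a sub-additivity splitting to define $\beta$, $\gamma_{\mathrm{ext}}$, $\gamma_{\mathrm{int}}$. The only differences are cosmetic (your factor-of-$3$ splitting versus the paper's nested factors of $2$ and $4$, and your explicit flagging of the measurable-selection issue, which the paper leaves implicit).
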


The proof of Theorem \ref{theorem1} requires the following preparatory lemma and is provided in the Appendix.

\begin{lemma}
\label{lem:KL}
	Let $g$ be a non-negative constant and $\eta$ be a $\Kinf$ function. Suppose that the function $y:\Rp \ra \Rp$ is continuous and we have $y(t) \le y(t_0) + \int_{t_0}^{t} [-\eta\big(y(\tau)\big)+g] \diff \tau$ for all $t \ge t_0 \ge 0$. Then, there exists a $\KL$ function $\vartheta$ such that 
	$$y(t) \le \max\Big\{\vartheta\big(y(0),t\big), \eta^{-1}\big(2g\big)\Big\}, \quad \forall t \ge 0.$$
\end{lemma}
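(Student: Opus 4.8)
The plan is to read the integral inequality as a differential comparison in two regimes separated by the threshold level $\ell:=\eta^{-1}(2g)$, exploiting the elementary observation that whenever $y\ge \ell$ one has $\eta(y)\ge 2g$, hence $-\eta(y)+g\le -\tfrac12\eta(y)\le 0$. First I would establish forward invariance of the sublevel set $\{y\le\ell\}$: if $y(t_0)\le\ell$ then $y(t)\le\ell$ for all $t\ge t_0$. I would argue this by contradiction. If $y(t_2)>\ell$ for some $t_2>t_0$, set $t_1:=\sup\{s\in[t_0,t_2]:y(s)\le\ell\}$, so that $y(t_1)=\ell$ by continuity and $y(\tau)>\ell$ on $(t_1,t_2]$. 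On that interval the integrand is $-\eta(y(\tau))+g<0$ (using $\eta(y)>2g\ge g$; when $g=0$, so $\ell=0$, one uses instead $y\ge 0$ and $-\eta(y)\le0$), and the hypothesis gives $y(t_2)\le y(t_1)=\ell$, a contradiction.

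Next I would convert the integral bound into a Dini-derivative bound. Since $y$ and $\eta$ are continuous, $t\mapsto\int_0^t[-\eta(y(\tau))+g]\,\diff\tau$ is $C^1$; dividing the hypothesis by $h>0$ and letting $h\to0^+$ yields $D^{+}y(t)\le -\eta(y(t))+g$, which on the regime $y(t)\ge\ell$ improves to $D^{+}y(t)\le-\tfrac12\eta(y(t))$. To extract the $\KL$ part, I would fix a locally Lipschitz $\Kinf$ minorant $\hat\rho$ of $\tfrac12\eta$ (such a minorant always exists for a continuous $\Kinf$ function) and let $\vartheta(r,t)$ be the value at time $t$ of the unique solution of $\dot w=-\hat\rho(w)$, $w(0)=r$. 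Local Lipschitzness guarantees existence, uniqueness, monotone dependence on $r$, and monotone decay to $0$, from which the standard argument shows $(r,t)\mapsto\vartheta(r,t)$ is of class $\KL$. The scalar comparison lemma applied to $D^{+}y\le-\hat\rho(y)$ then gives $y(t)\le\vartheta(y(0),t)$ on any interval $[0,T]$ along which $y\ge\ell$.

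Finally I would patch the estimates together. Fix $t\ge0$. If $y(t)\le\ell$, the claim holds trivially because the right-hand side is at least $\ell$. If $y(t)>\ell$, forward invariance forces $y(s)>\ell$ for every $s\in[0,t]$ (otherwise $y$ would have been trapped below $\ell$ and could not return above it), so the decay inequality holds on all of $[0,t]$ and the comparison estimate gives $y(t)\le\vartheta(y(0),t)$. In either case $y(t)\le\max\{\vartheta(y(0),t),\,\eta^{-1}(2g)\}$, which is the assertion.

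\textbf{Main obstacle.} The delicate point is the construction of $\vartheta$ and the justification of the comparison, since $\eta$ is only assumed continuous (class $\Kinf$) rather than locally Lipschitz, so solutions of $\dot w=-\tfrac12\eta(w)$ need not be unique and the comparison lemma does not apply directly. Passing to a locally Lipschitz $\Kinf$ minorant $\hat\rho\le\tfrac12\eta$ restores uniqueness and hence legitimizes the comparison, while only weakening (slowing) the decay rate and therefore still producing a valid upper bound on $y$. The remaining two-regime bookkeeping—ensuring the $\KL$ estimate above $\ell$ and the invariance bound below $\ell$ merge into the single $\max$—is routine once invariance is in hand.
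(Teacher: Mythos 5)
Your proof is correct, and its skeleton coincides with the paper's: both arguments split the analysis at the threshold $s_0=\eta^{-1}(2g)$, prove forward invariance of $[0,s_0]$ by exactly the contradiction you give, and then establish a $\KL$ decay estimate valid while $y$ stays above $s_0$, merging the two regimes into the stated $\max$. Where you genuinely diverge is in how the $\KL$ function is produced. The paper follows Lemma 4.4 of \cite{LSW96} directly: it defines $\kappa(s)=\int_1^s -\diff r/\min\{\eta(r),r\}$, integrates the hypothesis ``along $y$'' to obtain $\kappa(y(t))-\kappa(y(0))\ge t/2$, and sets $\vartheta(r,t)=\kappa^{-1}(\kappa(r)+t/2)$; the $\min\{\eta(r),r\}$ in the denominator forces $\kappa(s)\to\infty$ as $s\da 0$, which is what makes $\vartheta$ well defined and of class $\KL$. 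You instead pass to the Dini derivative $D^+y\le-\tfrac12\eta(y)$, replace $\tfrac12\eta$ by a locally Lipschitz $\Kinf$ minorant $\hat\rho$, and invoke the comparison lemma against the flow of $\dot w=-\hat\rho(w)$. Your route is cleaner on one point of rigor---the paper's change of variables $\int_{y(0)}^{y(t)}-\diff(y(\tau))/\min\{\cdot\}$ tacitly treats the merely continuous $y$ as differentiable, whereas the Dini-derivative argument uses only the integral hypothesis---at the price of two auxiliary facts you leave as ``standard'': the existence of the Lipschitz minorant, and that the resulting flow map is genuinely of class $\KL$. For the latter you should take $\hat\rho(s)\le\min\{\tfrac12\eta(s),s\}$, so that solutions of $\dot w=-\hat\rho(w)$ satisfy $w(t)\ge w(0)\mathsf{e}^{-t}>0$ and hence cannot collapse to zero in finite time, keeping $r\mapsto\vartheta(r,t)$ strictly increasing; this is precisely the role the term $\min\{\eta(r),r\}$ plays in the paper's $\kappa$. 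The paper's construction, in exchange, is fully explicit. Both are faithful adaptations of the same classical lemma.
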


The proof of Lemma \ref{lem:KL} is provided in the Appendix. 

Note that the importance of the result provided in Theorem~\ref{theorem1} is that one can synthesize a controller
for the abstraction $\hat \Sigma$, which is potentially easier (e.g., lower dimension and simpler interconnection topology) to enforce some complex specification, for example given in LTL. Then there exists a controller for the concrete stochastic hybrid system $\Sigma$ satisfying the same complex specification. The error, introduced in the design process by taking the detour through
the abstraction, is quantified by inequality~\eqref{inequality}. In Section \ref{s:lin}, we show how one can actually \emph{refine} a controller designed for the abstract JLSS to a controller for the original JLSS via a so-called \emph{interface} function.

%A symmetric version of a stochastic simulation function is called a stochastic bisimulation function as defined next.
%
%\begin{definition}\label{d:bsf} Let
%$\Sigma=(\R^n,\R^m,\R^p,\mathcal{U},\mathcal{W},f,\sigma,r,\R^q,h)$ and
%$\hat\Sigma=(\R^{\hat n},\R^{\hat m},\R^p,\hat{\mathcal{U}},\mathcal{W},\hat f,\hat\sigma,\hat r,\R^q,\hat h)$ be
%two stochastic hybrid systems with the same internal input and output space
%dimension. A twice continuously differentiable 
%function \mbox{$V:\R^{n}\times \R^{\hat n}\to\R_{\ge0}$} is
%called a \emph{stochastic bisimulation function between $\hat\Sigma$ and $\Sigma$ in the $k$th moment} (SBF-M$_k$), where $k\geq1$, if it is both a SSF-M$_k$ function from $\Sigma$ to $\hat\Sigma$ and from $\hat\Sigma$ to $\Sigma$
%\end{definition}

The notion of stochastic simulation function in this work can also be used to lower bound the probability that the Euclidean distance between any output trajectory of the abstract model and the corresponding one of the concrete model remains close. 

We make the above statement more precise with the following results.

\begin{proposition}\label{lemma6}
Let $\Sigma$ and
$\hat\Sigma$ be
two stochastic hybrid systems with the same internal input and output space
dimension. Suppose $V$ is an SSF-M$_k$ function from $\hat\Sigma$ to $\Sigma$ and the $\mathcal{K}_\infty$ function $\eta$ in \eqref{inequality1} satisfies $\eta(r)\geq\theta r$ for some $\theta\in\R_{>0}$ and any $r\in\R_{\geq0}$. For any
$\hat\nu\in\hat{\mathcal{U}}$, any $\hat \omega \in\mathcal{W}$, and any random variable $a$ and $\hat{a}$ that are $\sigalg_0$-measurable, there exists $\nu\in{\mathcal{U}}$ such that for all $\omega\in\mathcal{W}$ the following inequalities \eqref{simultaneous1} and \eqref{simultaneous2} hold provided that there exists a constant $\epsilon\geq0$ satisfying $\epsilon\geq\rho_{\mathrm{ext}}(\Vert\hat \nu\Vert^k_\infty)+\rho_{\mathrm{int}}(\Vert \omega-\hat \omega\Vert^k_\infty)$:
%\begin{small}
\begin{align}\label{simultaneous1}
&\PP\left\{\sup_{0\leq t\leq T}\Vert\zeta_{a\nu\omega}(t)-\hat \zeta_{\hat a\hat \nu\hat\omega}(t)\Vert\geq\varepsilon\,\,|\,\,[a;\hat a]\right\}\leq1-\left(1-\frac{V(a,\hat a)}{\alpha\left(\varepsilon^k\right)}\right)\mathsf{e}^{-\frac{\epsilon T}{\alpha\left(\varepsilon^k\right)}},~~~\text{if}~\alpha\left(\varepsilon^k\right)\geq\frac{\epsilon}{\theta},\\\label{simultaneous2}
&\PP\left\{\sup_{0\leq t\leq T}\Vert\zeta_{a\nu\omega}(t)-\hat \zeta_{\hat a\hat \nu\hat\omega}(t)\Vert\geq\varepsilon\,\,|\,\,[a;\hat a]\right\}\leq\frac{\theta V(a,\hat a)+\left(\mathsf{e}^{T\theta}-1\right)\epsilon}{\theta\alpha\left(\varepsilon^k\right)\mathsf{e}^{T\theta}},~~~~~~~~\text{if}~\alpha\left(\varepsilon^k\right)\leq\frac{\epsilon}{\theta}.
\end{align}
%\end{small}
\end{proposition}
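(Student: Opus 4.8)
The plan is to turn both probability estimates into maximal inequalities for the scalar process $V(\Xi(t))$ evaluated along the coupled trajectory $\Xi(t)=[\xi_{a\nu\omega}(t);\hat\xi_{\hat a\hat\nu\hat\omega}(t)]$, and then to obtain each of the two stated bounds from a single optional-stopping argument applied to a suitably time-weighted (sub/super)martingale. First I would fix the interface: given $\hat\nu$ and $\hat\omega$, the ``$\forall\hat u\,\forall\hat w\,\exists u$'' clause of Definition~\ref{d:sf} supplies, pointwise, an external input $u$ as a function of $(x,\hat x,\hat u,\hat w)$, and setting $\nu(t)$ equal to this value evaluated at $(\xi(t),\hat\xi(t),\hat\nu(t),\hat\omega(t))$ makes \eqref{inequality1} hold along $\Xi$ for every $\omega$. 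Invoking $\eta(r)\geq\theta r$, the monotonicity of $\rho_{\mathrm{ext}},\rho_{\mathrm{int}}$, and the hypothesis $\epsilon\geq\rho_{\mathrm{ext}}(\Vert\hat\nu\Vert_\infty^k)+\rho_{\mathrm{int}}(\Vert\omega-\hat\omega\Vert_\infty^k)$, this collapses to the dissipation inequality
\[
\mathcal{L}V(\Xi(t))\leq-\theta\,V(\Xi(t))+\epsilon.
\]
Writing $\lambda\Let\alpha(\varepsilon^k)$, inequality~\eqref{e:sf:1} gives the inclusion $\{\Vert h(x)-\hat h(\hat x)\Vert\geq\varepsilon\}\subseteq\{V(x,\hat x)\geq\lambda\}$, so it suffices to bound $\PP\{\sup_{0\leq t\leq T}V(\Xi(t))\geq\lambda\,|\,[a;\hat a]\}$. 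I would then introduce the first-hitting time $\tau\Let\inf\{t\geq0:V(\Xi(t))\geq\lambda\}$, work conditionally on $\sigalg_0$ (equivalently on $[a;\hat a]$), and split into the two regimes.

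For the case $\lambda\geq\epsilon/\theta$, corresponding to \eqref{simultaneous1}, I would study $Z(t)\Let(\lambda-V(\Xi(t)))\mathsf{e}^{\epsilon t/\lambda}$. Computing its drift via Dynkin's formula and using the dissipation inequality, its generator is at least $\mathsf{e}^{\epsilon t/\lambda}V(\Xi(t))\,(\theta-\epsilon/\lambda)\geq0$, the nonnegativity holding \emph{precisely} because $\lambda\geq\epsilon/\theta$; hence $Z$ is a submartingale. Optional stopping at the bounded time $T\wedge\tau$ gives $\EE[Z(T\wedge\tau)\,|\,\sigalg_0]\geq Z(0)=\lambda-V(a,\hat a)$. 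On $\{\tau\leq T\}$ one has $V(\Xi(\tau))\geq\lambda$, so $Z(\tau)\leq0$, while on $\{\tau>T\}$ nonnegativity of $V$ yields $Z(T)\leq\lambda\,\mathsf{e}^{\epsilon T/\lambda}$. Combining the two upper estimates shows $\lambda-V(a,\hat a)\leq\lambda\,\mathsf{e}^{\epsilon T/\lambda}\,\PP\{\tau>T\,|\,\sigalg_0\}$, which rearranges exactly into \eqref{simultaneous1}.

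For the case $\lambda\leq\epsilon/\theta$, corresponding to \eqref{simultaneous2}, I would instead use $N(t)\Let\mathsf{e}^{\theta t}V(\Xi(t))-\tfrac{\epsilon}{\theta}(\mathsf{e}^{\theta t}-1)$, whose generator equals $\mathsf{e}^{\theta t}(\mathcal{L}V+\theta V-\epsilon)\leq0$, so $N$ is a supermartingale; optional stopping then gives $V(a,\hat a)=N(0)\geq\EE[N(T\wedge\tau)\,|\,\sigalg_0]$. On $\{\tau\leq T\}$, the facts $V(\Xi(\tau))\geq\lambda$, $\tau\leq T$, and $\lambda-\epsilon/\theta\leq0$ combine to give the lower bound $N(\tau)\geq\lambda\mathsf{e}^{\theta T}-\tfrac{\epsilon}{\theta}(\mathsf{e}^{\theta T}-1)$, while on $\{\tau>T\}$ nonnegativity of $V$ gives $N(T)\geq-\tfrac{\epsilon}{\theta}(\mathsf{e}^{\theta T}-1)$. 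Splitting $\EE[N(T\wedge\tau)\,|\,\sigalg_0]$ over $\ind_{\tau\leq T}$ and $\ind_{\tau>T}$ and assembling the two pieces produces $V(a,\hat a)+\tfrac{\epsilon}{\theta}(\mathsf{e}^{\theta T}-1)\geq\lambda\mathsf{e}^{\theta T}\,\PP\{\tau\leq T\,|\,\sigalg_0\}$, which is \eqref{simultaneous2}.

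The algebra above is routine; the main obstacle is the rigorous justification of the martingale machinery for the jump-diffusion $\Xi$. One must apply Dynkin's formula in the form consistent with the generator~\eqref{generator} (so that the Poisson resets enter through the jump term), exploit the postulated polynomial growth of $V$ together with the global-Lipschitz/moment bounds on the solution processes to guarantee that the Brownian and compensated-Poisson stochastic integrals are genuine conditional martingales, and thereby legitimize optional stopping at the bounded stopping time $T\wedge\tau$. The one genuinely delicate point is the \emph{overshoot} of $V$ at $\tau$ caused by the jumps: the level $\lambda$ may be crossed by a discontinuous reset rather than attained exactly. This is benign here because each case only uses the one-sided bound $V(\Xi(\tau))\geq\lambda$, and the estimates are oriented so that jumping strictly past the level can only improve them; nonetheless, verifying measurability of $\tau$ for the right-continuous filtration and the integrability needed for optional stopping is the step that warrants the most care.
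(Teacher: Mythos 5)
Your proposal is correct, and it differs from the paper's proof only in where the probabilistic work is done. The paper's own argument consists exactly of your first two steps -- choosing $\nu$ via the $\exists u$ clause of Definition~\ref{d:sf}, collapsing \eqref{inequality1} to $\mathcal{L}V(\Xi(t))\leq-\theta V(\Xi(t))+\epsilon$ using $\eta(r)\geq\theta r$ and the bound $\epsilon\geq\rho_{\mathrm{ext}}(\Vert\hat\nu\Vert_\infty^k)+\rho_{\mathrm{int}}(\Vert\omega-\hat\omega\Vert_\infty^k)$, and then using \eqref{e:sf:1} together with monotonicity of $\alpha$ to reduce the claim to a maximal inequality for $\sup_{0\leq t\leq T}V(\Xi(t))$ at level $\lambda=\alpha(\varepsilon^k)$ -- after which it simply invokes Theorem 1 of Chapter III in Kushner's book, which is precisely the two-regime bound you derive. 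What you add is a self-contained proof of that cited result: the submartingale $(\lambda-V(\Xi(t)))\mathsf{e}^{\epsilon t/\lambda}$ for $\lambda\geq\epsilon/\theta$ and the supermartingale $\mathsf{e}^{\theta t}V(\Xi(t))-\tfrac{\epsilon}{\theta}(\mathsf{e}^{\theta t}-1)$ for $\lambda\leq\epsilon/\theta$, each stopped at $T\wedge\tau$; I checked the drift computations and the case splits over $\{\tau\leq T\}$ and $\{\tau>T\}$, and they reproduce \eqref{simultaneous1} and \eqref{simultaneous2} exactly (note the conditional versions given $[a;\hat a]$ go through verbatim since $Z(0)$ and $N(0)$ are $\sigalg_0$-measurable). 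Your closing remarks on the genuinely delicate points -- justifying Dynkin's formula with the jump term in \eqref{generator}, integrability from the polynomial growth of $V$ and the moment bounds on the solution processes, and the harmlessness of the overshoot $V(\Xi(\tau))>\lambda$ at a reset because both estimates only use the one-sided bound $V(\Xi(\tau))\geq\lambda$ -- are exactly the hypotheses under which Kushner's theorem applies, so nothing is lost relative to the paper; what your route buys is independence from the external reference at the cost of a longer write-up.
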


The proof of Proposition \ref{lemma6} is provided in the Appendix.

As an alternative to the previous result, we now use the notion of stochastic simulation function to lower bound the probability of the Euclidean distance between any output trajectory of the abstract model and the corresponding one of the concrete model point-wise in time: this error bound is sufficient
to work with those specifications for which satisfiability can be
achieved at single time instances, such as next ($\bigcirc$) and
eventually ($\Diamond$) in LTL. Please look at the explanation after the proof
of Proposition 5.11 in \cite{majid8} for more details.

\begin{proposition}\label{lemma7}
Let $\Sigma$ and
$\hat\Sigma$ be
two stochastic hybrid systems with the same internal input and output space
dimension. Suppose $V$ is an SSF-M$_k$ function from $\hat\Sigma$ to $\Sigma$. For any
$\hat\nu\in\hat{\mathcal{U}}$, any $\hat \omega \in\mathcal{W}$, and any random variable $a$ and $\hat{a}$ that are $\sigalg_0$-measurable, there exists $\nu\in{\mathcal{U}}$ such that for all $\omega\in\mathcal{W}$ the following inequality holds for all $t\in\R_{\geq0}$:
\begin{align}\label{instances}
&\PP\left\{\Vert\zeta_{a\nu\omega}(t)-\hat \zeta_{\hat a\hat \nu\hat\omega}(t)\Vert\geq\varepsilon\right\}\leq\frac{\left(\beta\left(\EE[V(a,\hat a)],t\right )+ \gamma_{\mathrm{ext}}(\EE[\Vert\hat \nu\Vert^k_\infty]) + \gamma_{\mathrm{int}}(\EE[\Vert\omega-\hat \omega\Vert^k_\infty])\right)^{\frac{1}{k}}}{\varepsilon},
\end{align}
where $\beta$, $\gamma_{\mathrm{ext}}$, and $\gamma_{\mathrm{int}}$ are the functions appearing in \eqref{inequality}.
\end{proposition}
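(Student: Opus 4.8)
The plan is to obtain Proposition~\ref{lemma7} as a short corollary of Theorem~\ref{theorem1} combined with two standard probabilistic inequalities. Since Theorem~\ref{theorem1} already supplies, for the given $\hat\nu\in\hat{\mathcal{U}}$, $\hat\omega\in\mathcal{W}$, and $\sigalg_0$-measurable $a$, $\hat a$, an external input $\nu\in\mathcal{U}$ for which the $k$th-moment estimate~\eqref{inequality} holds for every $\omega\in\mathcal{W}$, I would simply reuse this same $\nu$; no new controller construction is required, which is why the existential quantifier on $\nu$ in the statement is inherited directly from the theorem.

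First, I would fix $t\in\R_{\geq0}$ and apply Markov's inequality to the nonnegative random variable $\Vert\zeta_{a\nu\omega}(t)-\hat\zeta_{\hat a\hat\nu\hat\omega}(t)\Vert$, which gives
\begin{align*}
\PP\left\{\Vert\zeta_{a\nu\omega}(t)-\hat \zeta_{\hat a\hat \nu\hat\omega}(t)\Vert\geq\varepsilon\right\}\leq\frac{\EE\left[\Vert\zeta_{a\nu\omega}(t)-\hat \zeta_{\hat a\hat \nu\hat\omega}(t)\Vert\right]}{\varepsilon}.
\end{align*}
Second, since $k\geq1$ the map $s\mapsto s^k$ is convex on $\R_{\geq0}$, so Jensen's inequality (equivalently, the Lyapunov moment inequality) yields $\EE[\Vert\cdot\Vert]\leq\big(\EE[\Vert\cdot\Vert^k]\big)^{1/k}$ applied to the output difference. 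Third, I would bound the resulting $k$th moment using the right-hand side of~\eqref{inequality}; because $s\mapsto s^{1/k}$ is increasing, raising that bound to the power $1/k$ produces precisely the numerator appearing in~\eqref{instances}, and dividing by $\varepsilon$ completes the proof for every $t\in\R_{\geq0}$.

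I do not anticipate a substantive obstacle here, as the result is genuinely a corollary of Theorem~\ref{theorem1}. The only point that needs care is the ordering of the two inequalities: one must apply Markov to the \emph{first} moment of the norm and then invoke Jensen, rather than applying Markov directly to the $k$th power $\Vert\cdot\Vert^k$. The latter route would yield the valid but differently normalized bound $(\cdots)/\varepsilon^k$, whereas the path through Jensen produces exactly the stated form $(\cdots)^{1/k}/\varepsilon$ that matches~\eqref{inequality} without further manipulation.
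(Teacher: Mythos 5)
Your proposal is correct and follows exactly the paper's own argument: Markov's inequality applied to the first moment of the output mismatch, then the Lyapunov/Jensen bound $\EE[\Vert\cdot\Vert]\leq(\EE[\Vert\cdot\Vert^k])^{1/k}$, and finally the estimate~\eqref{inequality} from Theorem~\ref{theorem1} with the same choice of $\nu$. Your remark about the ordering of the two inequalities (Markov on the first moment rather than on the $k$th power) accurately identifies why the bound takes the stated form $(\cdots)^{1/k}/\varepsilon$.
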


The proof of Proposition \ref{lemma7} is provided in the Appendix.

In the next section, we work with interconnected stochastic hybrid systems \emph{without}
internal inputs, resulting from the interconnection of stochastic hybrid subsystems having both internal and external signals. In this case, the interconnected stochastic hybrid systems reduce
to the tuple $\Sigma=\left(\R^n,\R^m,\mathcal{U},f,\sigma,r,\R^q,h\right)$ and the drift term becomes $f:\R^n\times\R^m\to\R^n$. 
In this view, inequality \eqref{inequality1} is not quantified
over $w,\hat w\in\R^p$, and, hence, the term $\rho_{\mathrm{int}}(\Vert w-\hat w\Vert^k)$ is omitted as well.
Similarly, the results in Theorem~\ref{theorem1} and Propositions \ref{lemma6} and \ref{lemma7}
are modified accordingly, i.e., for systems without internal inputs the
inequalities~\eqref{inequality}, \eqref{simultaneous1}, \eqref{simultaneous2}, and \eqref{instances} are not quantified
over $\omega,\hat \omega\in\mathcal{W}$ and, hence, the term
$\gamma_{\mathrm{int}}(\EE[\Vert\omega-\hat \omega\Vert^k_\infty])$ is omitted in inequalities \eqref{inequality} and \eqref{instances} and $\epsilon$ is lower bounded as $\epsilon\geq\rho_{\mathrm{ext}}(\Vert\hat \nu\Vert^k_\infty)$ in Proposition \ref{lemma6} as well.

The next corollary provides a similar result as the one of Proposition \ref{lemma6} but by considering an infinite time horizon and interconnected stochastic hybrid systems and assuming $\hat\nu\equiv0$, resulting in $\epsilon=0$. The relation proposed in this corollary recovers the one proposed in \cite[Theorem 3]{julius1}.

\begin{corollary}\label{lemma8}
Let $\Sigma$ and
$\hat\Sigma$ be
two interconnected stochastic hybrid systems with the same output space
dimension. Suppose $V$ is an SSF-M$_k$ function from $\hat\Sigma$ to $\Sigma$. For
$\hat\nu\equiv0$ and any random variable $a$ and $\hat{a}$ that are $\sigalg_0$-measurable, there exists $\nu\in{\mathcal{U}}$ such that the following inequality holds:
\begin{align}\nonumber
%\label{simultaneous3}
\PP\left\{\sup_{0\leq t< \infty}\Vert\zeta_{a\nu}(t)-\hat \zeta_{\hat a 0}(t)\Vert>\varepsilon\,\,|\,\,[a;\hat a]\right\}\leq\frac{V(a,\hat a)}{\alpha\left(\varepsilon^k\right)}.
\end{align}
\end{corollary}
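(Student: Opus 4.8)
The plan is to observe that, under the hypotheses of the corollary, the stochastic simulation function evaluated along the joint trajectory is a nonnegative supermartingale, and then to extract the probability bound from a maximal (Ville-type) inequality on the infinite horizon. Concretely, since we are in the interconnected, internal-input-free setting and $\hat\nu\equiv0$, the defining inequality \eqref{inequality1} loses its $\rho_{\mathrm{int}}$ term and has $\hat u=0$, so it reduces to $\mathcal{L}V(x,\hat x)\le-\eta(V(x,\hat x))+\rho_{\mathrm{ext}}(0)=-\eta(V(x,\hat x))\le0$, using $\rho_{\mathrm{ext}}(0)=0$ and nonnegativity of $\eta\in\Kinf$. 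The $\forall\hat u\,\exists u$ structure of Definition \ref{d:sf}, lifted by a measurable-selection argument exactly as in the proof of Theorem \ref{theorem1}, fixes an external input $\nu\in\mathcal{U}$ realizing this bound along the trajectory. Writing $\Xi\Let[\xi_{a\nu};\hat\xi_{\hat a0}]$ and $M_t\Let V(\Xi(t))$, Dynkin's formula for jump-diffusions gives $M_t=M_0+\int_0^t\mathcal{L}V(\Xi(s))\,\diff s+(\text{martingale part})$, and the nonpositivity of $\mathcal{L}V$ suggests that $(M_t)_{t\ge0}$ is a nonnegative supermartingale.

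The step I expect to be the main obstacle is promoting this heuristic to an honest supermartingale statement rather than a merely local one, since the state space is unbounded. I would introduce the stopping times $\tau_n$ at which $\Xi$ first exits the ball of radius $n$, apply Dynkin's formula on $[0,t\wedge\tau_n]$ where the Brownian stochastic integral and the compensated Poisson integral are genuine martingales, and then let $n\to\infty$. The polynomial growth rate of $V$ postulated in Definition \ref{d:sf}, combined with the moment estimates on $\Xi$ that follow from the global Lipschitz hypotheses on $f,\sigma,r$ (via standard Gr\"onwall-type bounds and the strong Markov property of the solution), yields the uniform integrability needed to pass to the limit and obtain $\EE[M_t\mid\sigalg_s]\le M_s$ for the unstopped process and all $t\ge s\ge0$.

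Finally, I would invoke the conditional form of Ville's inequality for nonnegative supermartingales on $[0,\infty)$, which gives $\PP\{\sup_{0\le t<\infty}M_t\ge\lambda\mid[a;\hat a]\}\le V(a,\hat a)/\lambda$ almost surely (note that $M_0=V(a,\hat a)$ is $\sigalg_0$-measurable once we condition on $[a;\hat a]$), specialized to $\lambda=\alpha(\varepsilon^k)$. It then remains to translate the event on $M_t$ into the output event using the lower bound \eqref{e:sf:1}: because $\alpha\in\Kinf$ is strictly increasing, $\Vert\zeta_{a\nu}(t)-\hat\zeta_{\hat a0}(t)\Vert>\varepsilon$ forces $\alpha(\varepsilon^k)<\alpha(\Vert h(\xi(t))-\hat h(\hat\xi(t))\Vert^k)\le M_t$, whence $\{\sup_{t}\Vert\zeta_{a\nu}(t)-\hat\zeta_{\hat a0}(t)\Vert>\varepsilon\}\subseteq\{\sup_{t}M_t\ge\alpha(\varepsilon^k)\}$. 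Monotonicity of conditional probability under this inclusion, together with the supermartingale inequality, delivers the claimed bound $V(a,\hat a)/\alpha(\varepsilon^k)$.
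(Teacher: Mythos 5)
Your argument is correct and follows essentially the same route as the paper: specialize \eqref{inequality1} with $\hat u=0$ and no internal inputs to get $\mathcal{L}V\le-\eta(V)\le0$, conclude that $V$ along the joint trajectory is a nonnegative supermartingale, and apply the supermartingale maximal inequality together with \eqref{e:sf:1} to convert the output event into the $V$-event. The only difference is that you spell out the localization and uniform-integrability step behind the supermartingale claim, which the paper delegates to a citation, and you invoke Ville's inequality where the paper cites Kushner's Lemma~1 --- these are the same estimate.
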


The proof of Corollary \ref{lemma8} is provided in the Appendix.

Note that under the assumptions of Corollary \ref{lemma8} any SSF-M$_k$ function is also a stochastic simulation function as in \cite{julius1}.

\section{Compositionality Result}
\label{s:inter}

In this section, we analyze interconnected stochastic hybrid systems and show
how to construct an abstraction of an
interconnected stochastic hybrid system together with the corresponding stochastic simulation function. The
definition of the interconnected stochastic hybrid system is based on the notion of
interconnected systems introduced in~\cite{TI08}. 

\subsection{Interconnected stochastic hybrid systems}

We consider $N\in\N_{\geq1}$ stochastic hybrid subsystems
\begin{IEEEeqnarray*}{c'c}
\Sigma_i=\left(\R^{n_i},\R^{m_i},\R^{p_i},\mathcal{U}_i,\mathcal{W}_i,f_i,\sigma_i,r_i,\R^{q_i},h_i\right),
&i\in\intcc{1;N}
\end{IEEEeqnarray*}
with partitioned internal inputs and outputs
\begin{align}\notag
w_i&=\intcc{w_{i1};\ldots;w_{i(i-1)};w_{i(i+1)};\ldots;w_{iN}},~ w_{ij}\in\R^{p_{ij}}\\\label{e:iop}
y_i&=\intcc{y_{i1};\ldots;y_{iN}},~ y_{ij}\in\R^{q_{ij}}
\end{align}
and output function
\begin{IEEEeqnarray}{c'c}\label{e:of}
h_i(x_i)=\intcc{h_{i1}(x_i);\ldots;h_{iN}(x_i)},
\end{IEEEeqnarray}
as depicted schematically in Figure \ref{system}.
\begin{figure}[ht]
\begin{center}
\includegraphics[width=6cm]{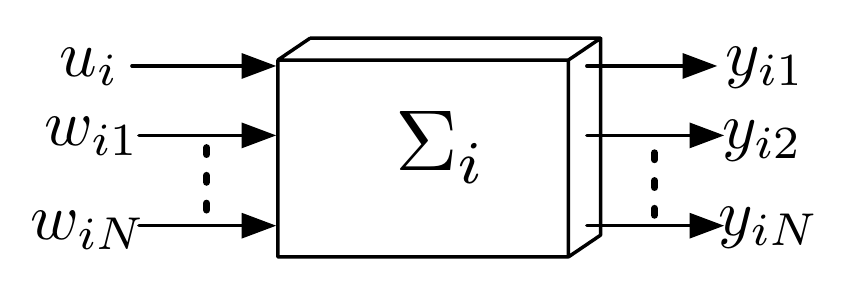}
\caption{Input/output configuration of stochastic hybrid subsystem~$\Sigma_i$.}
\label{system}
\end{center}
\end{figure}

We interpret the outputs $y_{ii}$ as \emph{external} ones, whereas the outputs $y_{ij}$ with $i\neq j$ are \emph{internal}
ones which are used to define the
interconnected stochastic hybrid systems. In particular, we assume that the
dimension of $w_{ij}$ is equal to the dimension of $y_{ji}$, i.e.,
the following \emph{interconnection constraints} hold:
\begin{IEEEeqnarray}{c'c}\label{e:ic}
p_{ij}=q_{ji},&\forall i,j\in\intcc{1;N},~i\neq j.
\end{IEEEeqnarray}
If there is no connection from stochastic hybrid subsystem $\Sigma_{i}$ to
$\Sigma_j$, then we assume that the connecting output function is identically
zero for all arguments, i.e., $h_{ij}\equiv 0$. We define the \emph{interconnected stochastic hybrid system} as the following.

\begin{definition}\label{d:ics}
Consider $N\in\N_{\geq1}$ stochastic hybrid subsystems $
\Sigma_i=\left(\R^{n_i},\R^{m_i},\R^{p_i},\mathcal{U}_i,\mathcal{W}_i,f_i,\sigma_i,r_i,\R^{q_i},h_i\right)$,
$i\in\intcc{1;N}$, with the input-output configuration given
by~\eqref{e:iop}-\eqref{e:ic}. The \emph{interconnected stochastic hybrid
system} $\Sigma=\left(\R^{n},\R^{m},\mathcal{U},f,\sigma,r,\R^{q},h\right)$, denoted by
$\mathcal{I}(\Sigma_1,\ldots,\Sigma_N)$, follows by $n=\sum_{i=1}^Nn_i$,
$m=\sum_{i=1}^Nm_{i}$, $q=\sum_{i=1}^Nq_{ii}$, and functions
\begin{align*}
f(x,u)&\Let\intcc{f_1(x_1,u_1,w_1);\ldots;f_N(x_N,u_N,w_N)},\\
\sigma(x)&\Let\intcc{\sigma_1(x_1);\ldots;\sigma_N(x_n)},\\
r(x)&\Let\intcc{r_1(x_1);\ldots;r_N(x_n)},\\
h(x)&\Let\intcc{h_{11}(x_1);\ldots;h_{NN}(x_N)},
\end{align*}
where $u=\intcc{u_{1};\ldots;u_{N}}$ and $x=\intcc{x_{1};\ldots;x_{N}}$ and
with the interconnection variables constrained by
$w_{ij}=y_{ji}$ for all $i,j\in\intcc{1;N},i\neq j$.
\end{definition}

The interconnection of two stochastic hybrid subsystems $\Sigma_i$
and $\Sigma_j$ from a group of $N$ subsystems is illustrated in Figure \ref{system1}.
\begin{figure}[ht]
\begin{center}
\includegraphics[width=6cm]{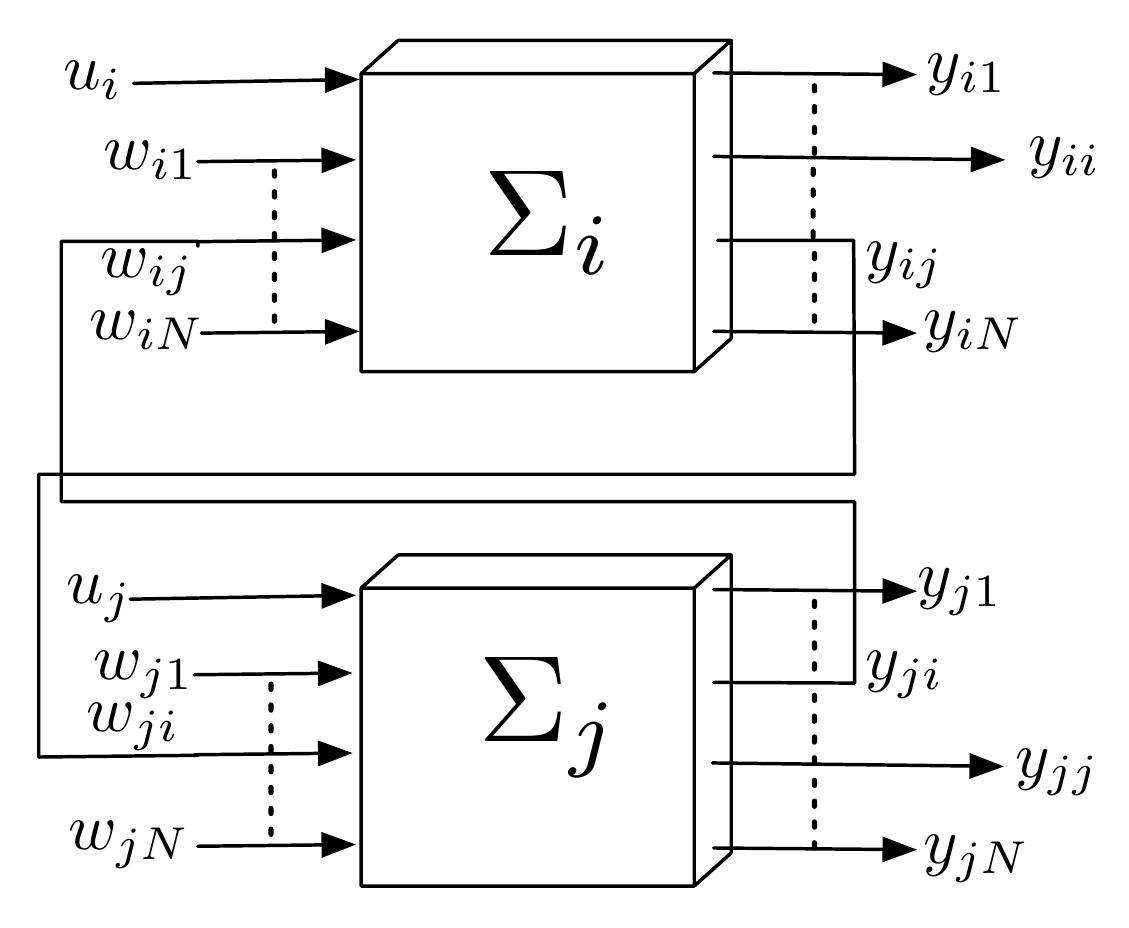}
\caption{Interconnection of two stochastic hybrid subsystems $\Sigma_i$ and $\Sigma_j$.}
\label{system1}
\end{center}
\end{figure}

\subsection{Compositional construction of abstractions and simulation functions}
We assume that we are given $N$ stochastic hybrid subsystems
$\Sigma_i=\left(\R^{n_i},\R^{m_i},\R^{p_i},\mathcal{U}_i,\mathcal{W}_i,f_i,\sigma_i,r_i,\R^{q_i},h_i\right),$
together with their corresponding abstractions $\hat\Sigma_i=(\R^{\hat n_i},\R^{\hat
m_i},\R^{p_i},\hat{\mathcal{U}}_i,\mathcal{W}_i,\hat f_i,\hat\sigma_i,\hat r_i,\R^{q_i},\hat
h_i)$ and with SSF-M$_k$ functions $V_i$ from
$\hat\Sigma_i$ to $\Sigma_i$. %We use $\eta_i$, $\alpha_i$, $\rho_{i\mathrm{ext}}$, and $\rho_{i\mathrm{int}}$ to denote the corresponding $\mathcal{K}_\infty$ functions appearing in Definition~\ref{d:sf}. 
In order to provide the main compositionality result, we require the following assumption:

\begin{assumption}\label{assumption1}
For any $i,j\in\intcc{1;N}$, $i\neq j$, there exist $\mathcal{K}_\infty$ convex functions $\gamma_i$ and constants $\widetilde\lambda_i\in\R_{>0}$ and $\delta_{ij}\in\R_{\geq0}$ such that for any $s\in\R_{\geq0}$
\begin{small}
\begin{IEEEeqnarray}{l}\label{assumption}
\IEEEyesnumber
\hspace{-0.5cm}\IEEEyessubnumber\label{a:1a}  \eta_i(s)\geq\widetilde\lambda_i\gamma_i(s)\\
\hspace{-0.5cm}\IEEEyessubnumber\label{a:1b}  h_{ji}\equiv 0\implies \delta_{ij}=0 \text{ and }\\
\hspace{-0.5cm}\IEEEyessubnumber\label{a:1c}  h_{ji}\not\equiv 0\implies \rho_{i\mathrm{int}}((N-1)^{\max\{\frac{k}{2},1\}}\alpha_j^{-1}(s))\leq\delta_{ij}\gamma_j(s),
\end{IEEEeqnarray}
\end{small}
where $\eta_i$, $\alpha_i$, and $\rho_{i\mathrm{int}}$ represent the corresponding $\mathcal{K}_\infty$ functions of subsystems~$\Sigma_i$ appearing in Definition~\ref{d:sf}. 
\end{assumption}

For notational simplicity in the rest of the paper, we define matrices $\Lambda$ and $\Delta$ in $\R^{N\times{N}}$ with their components
given by
$\Lambda_{ii}=\widetilde\lambda_i$, $\Delta_{ii}=0$ for $i\in\intcc{1;N}$ and 
$\Lambda_{ij}=0$, $\Delta_{ij}=\delta_{ij}$ for
$i,j\in\intcc{1;N}, i\neq j$. Moreover, we define $\Gamma(\as):=\intcc{\gamma_1(s_1);\ldots;\gamma_N(s_N)}$, where $\as=\intcc{s_1;\ldots;s_N}$.

The next theorem provides a compositional approach on the construction of abstractions of interconnected stochastic hybrid systems and that of the corresponding SSF-M$_k$ functions.

\begin{theorem}\label{t:ic}
Consider the interconnected stochastic hybrid system
$\Sigma=\mathcal{I}(\Sigma_1,\ldots,\Sigma_N)$ induced by $N\in\N_{\geq1}$ stochastic
hybrid subsystems~$\Sigma_i$.
Suppose that each stochastic hybrid subsystem $\Sigma_i$ approximately alternatingly simulates a stochastic hybrid subsystem $\hat \Sigma_i$ with the corresponding SSF-M$_k$ function $V_i$. If Assumption \ref{assumption1} holds and there exists a vector
$\mu\in\R^N_{>0}$ such that the inequality
\begin{IEEEeqnarray}{c}\label{e:SGcondition}
\mu^T(-\Lambda+\Delta)< 0
\end{IEEEeqnarray}
is satisfied\footnote{We interpret the inequality component-wise, i.e.,
for $x\in\R^N$ we have $x<0$ iff every entry $x_i<0$,
  $i\in\{1,\ldots,N\}$.}, then 
\begin{IEEEeqnarray*}{c}
V(x,\hat x)\Let\sum_{i=1}^N\mu_iV_i(x_i,\hat x_i)
\end{IEEEeqnarray*}
is an SSF-M$_k$
function from $\hat \Sigma=\mathcal{I}(\hat
\Sigma_1,\ldots,\hat\Sigma_N)$ to $\Sigma$. 

%If each $V_i$ is additionally a SBF-M$_k$ function between $\hat\Sigma_i$ and
%$\Sigma_i$, then $V$ is a SBF-M$_k$ function between $\hat\Sigma$
%and~$\Sigma$.
\end{theorem}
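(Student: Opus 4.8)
The plan is to verify the two defining properties of an SSF-M$_k$ from Definition \ref{d:sf} for the candidate $V(x,\hat x)\Let\sum_{i=1}^N\mu_iV_i(x_i,\hat x_i)$: the output lower bound \eqref{e:sf:1} and the dissipation inequality \eqref{inequality1} (the latter with no $\rho_{\mathrm{int}}$ term, since $\Sigma=\mathcal{I}(\Sigma_1,\ldots,\Sigma_N)$ carries no internal input), while polynomial growth of $V$ is immediate from that of the $V_i$. For \eqref{e:sf:1} I would use that the external block $h_{ii}(x_i)$ is a sub-block of $h_i(x_i)$, so $\Vert h_{ii}(x_i)-\hat h_{ii}(\hat x_i)\Vert\le\Vert h_i(x_i)-\hat h_i(\hat x_i)\Vert$. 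Combining this with the elementary inequality $\Vert[z_1;\ldots;z_N]\Vert^k\le N^{\max\{k/2-1,0\}}\sum_i\Vert z_i\Vert^k$, the per-subsystem estimate $\alpha_i(\Vert h_i-\hat h_i\Vert^k)\le V_i$, and $\mu_iV_i\le V$, I obtain $\Vert h(x)-\hat h(\hat x)\Vert^k\le\widehat\alpha(V)$ with $\widehat\alpha(r)\Let N^{\max\{k/2-1,0\}}\sum_i\alpha_i^{-1}(r/\mu_i)$. Since each $\alpha_i^{-1}$ is concave and $\mathcal{K}_\infty$, so is $\widehat\alpha$, whence $\alpha\Let\widehat\alpha^{-1}$ is convex and $\mathcal{K}_\infty$ and yields \eqref{e:sf:1}.

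The core of the argument is the dissipation inequality, for which I would first establish the separation $\mathcal{L}V(x,\hat x)=\sum_{i=1}^N\mu_i\mathcal{L}_iV_i(x_i,\hat x_i)$, where $\mathcal{L}_i$ is the generator associated with the pair $(\Sigma_i,\hat\Sigma_i)$. This follows by inspecting the three parts of \eqref{generator}: the block-diagonal structure of $f,\sigma,r$ from Definition \ref{d:ics} together with the additive separable form of $V$ makes the Hessian of $V$ block diagonal in the $(x_i,\hat x_i)$ coordinates, so the drift contribution splits and, upon taking the trace against this Hessian, the diffusion cross-terms between distinct subsystems vanish, while the common Poisson jumps act block-wise. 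Into each summand I substitute the interconnection relations $w_{ij}=h_{ji}(x_j)$ and $\hat w_{ij}=\hat h_{ji}(\hat x_j)$ and invoke the SSF-M$_k$ inequality for $V_i$: given the abstract inputs, the alternating-simulation clause supplies a $u_i\in\R^{m_i}$ for which \eqref{inequality1} holds for \emph{every} $w_i$, in particular the one dictated by the interconnection; concatenating $u=[u_1;\ldots;u_N]$ then realises the $\forall\hat u\,\exists u$ requirement at the interconnection level.

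Next I would bound the internal-input terms $\rho_{i\mathrm{int}}(\Vert w_i-\hat w_i\Vert^k)$. Writing $w_i-\hat w_i$ as the concatenation of the $N-1$ blocks $h_{ji}(x_j)-\hat h_{ji}(\hat x_j)$, the same power-norm inequality pulls out a factor bounded by $(N-1)^{\max\{k/2,1\}}$, concavity (hence subadditivity) of $\rho_{i\mathrm{int}}$ distributes it over the sum, and $\Vert h_{ji}(x_j)-\hat h_{ji}(\hat x_j)\Vert^k\le\Vert h_j(x_j)-\hat h_j(\hat x_j)\Vert^k\le\alpha_j^{-1}(V_j)$ together with \eqref{a:1b}--\eqref{a:1c} gives $\rho_{i\mathrm{int}}(\Vert w_i-\hat w_i\Vert^k)\le\sum_{j\neq i}\delta_{ij}\gamma_j(V_j)$, the vanishing blocks being handled by \eqref{a:1b}. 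Replacing $\eta_i(V_i)$ by $\widetilde\lambda_i\gamma_i(V_i)$ via \eqref{a:1a} and collecting, the $\gamma$-terms assemble, with $\Lambda,\Delta,\Gamma$ as defined preceding the theorem, into $\mu^{T}(-\Lambda+\Delta)\,\Gamma([V_1;\ldots;V_N])$, which the small-gain condition \eqref{e:SGcondition} renders a strictly negative combination $-\sum_{j}c_j\gamma_j(V_j)$ with every $c_j>0$; meanwhile $\sum_i\mu_i\rho_{i\mathrm{ext}}(\Vert\hat u_i\Vert^k)\le\rho_{\mathrm{ext}}(\Vert\hat u\Vert^k)$ with $\rho_{\mathrm{ext}}\Let\sum_i\mu_i\rho_{i\mathrm{ext}}$ concave and $\mathcal{K}_\infty$ (using $\Vert\hat u_i\Vert\le\Vert\hat u\Vert$).

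The remaining, and main, obstacle is to convert $\sum_j c_j\gamma_j(V_j)$ into $\eta(V)$ for a single \emph{convex} $\mathcal{K}_\infty$ function $\eta$, since a naive choice such as $\min_j\gamma_j$ need not be convex. I would set $\eta(r)\Let\inf\{\sum_j c_j\gamma_j(s_j):s_j\ge0,\ \sum_j\mu_j s_j\ge r\}$: convexity of each $\gamma_j$ makes this value function convex, because a convex combination of points feasible for $r_1,r_2$ is feasible for the corresponding convex combination of the arguments with no larger objective; the lower estimate $\eta(r)\ge(\min_jc_j)\,\min_j\gamma_j\big(r/(N\max_j\mu_j)\big)$ forces $\eta\in\mathcal{K}_\infty$; and by construction $\eta(V)\le\sum_j c_j\gamma_j(V_j)$ since $(V_1,\ldots,V_N)$ is feasible for $r=V$. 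Combining the three estimates gives $\mathcal{L}V\le-\eta(V)+\rho_{\mathrm{ext}}(\Vert\hat u\Vert^k)$, which is \eqref{inequality1}, completing the verification that $V$ is an SSF-M$_k$ from $\hat\Sigma$ to $\Sigma$.
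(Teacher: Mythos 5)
Your proposal is correct and follows essentially the same route as the paper: the same norm-power inequality, the same substitution of the interconnection constraints into the per-subsystem dissipation inequalities, the same use of Assumption \ref{assumption1} and the small-gain condition \eqref{e:SGcondition}, with the composite gains $\alpha$, $\eta$, $\rho_{\mathrm{ext}}$ obtained as value functions of optimization problems over the simplex $\mu^{T}\as=s$. The only differences are cosmetic packaging (e.g., your explicit $\sum_i\alpha_i^{-1}(r/\mu_i)$ and direct convexity argument for $\eta$ versus the paper's appeal to perturbation-function theory), plus a welcome explicit justification of the generator splitting $\mathcal{L}V=\sum_i\mu_i\mathcal{L}_iV_i$ that the paper states without comment.
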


\begin{proof}
Note that for any $x=\intcc{x_1;\ldots;x_N}$, where $x_i\in\R^{n_i}$ and $i\in[1;N]$, one obtains: $$\Vert x\Vert^k\leq\sum_{i=1}^N\Vert x_i\Vert^k,$$ for any $k\in\intcc{1,2}$ due to triangle inequality and appropriate equivalency between different norms and $$\Vert x\Vert^k=(\Vert x\Vert^2)^{\frac{k}{2}}= (\sum_{i=1}^N\Vert x_i\Vert^2)^{\frac{k}{2}}\leq N^{\frac{k}{2}-1} \sum_{i=1}^N\Vert x_i\Vert^k,$$ for any $k>2$ due to Jensen's inequality \cite{ref:Boyd} for convex functions. By combining the previous inequalities, one gets 
\begin{equation}\label{jenson}
\Vert x\Vert^k\leq N^{\max\{\frac{k}{2},1\}-1} \sum_{i=1}^N\Vert x_i\Vert^k,
\end{equation}
for any $k\geq1$ and any $x=\intcc{x_1;\ldots;x_N}$, where $x_i\in\R^{n_i}$ and $i\in[1;N]$.

First we show that inequality \eqref{e:sf:1} holds for some convex
$\mathcal{K}_\infty$ function $\alpha$. Using \eqref{jenson}, for any
$x=\intcc{x_1;\ldots;x_N}\in\R^{n}$ and 
$\hat x=\intcc{\hat x_1;\ldots;\hat x_N}\in\R^{\hat n}$, one gets:
\begin{align*}
\Vert \hat h(\hat x)-h(x) \Vert^k
&\le
N^{\max\{\frac{k}{2},1\}-1}\sum_{i=1}^N \Vert \hat h_{ii}(\hat x_i)-h_{ii}(x_i) \Vert^k\le 
N^{\max\{\frac{k}{2},1\}-1}\sum_{i=1}^N \Vert \hat h_{i}(\hat x_i)-h_{i}(x_i) \Vert^k\\ 
&\le
N^{\max\{\frac{k}{2},1\}-1}\sum_{i=1}^N \alpha_i^{-1}(V_i( x_i, \hat x_i))\le \ol\alpha(V(x,\hat x)),
%\sum_{i=1}^N (\alpha_i^{-1})^{1/k}\left(\frac{\mu_i V_i( x_i,\hat x_i)}{\hat\mu}\right) \\
%\le 
%\sum_{i=1}^N \overline{\alpha}^{1/k}\left(\frac{\mu_i V_i( x_i, \hat x_i)}{\hat\mu}\right)
%\leq
%N \overline\alpha^{1/k}\left(\sum_{i=1}^N\frac{\mu_i V_i( x_i, \hat x_i)}{\hat\mu}\right)
\end{align*}where
$\overline\alpha$ is a $\mathcal{K}_\infty$ function defined as 
\begin{align}\notag
	\ol\alpha(s) &\Let \left \{ \begin{array}{cc} \max\limits_{\as {\ge 0}} & N^{\max\{\frac{k}{2},1\}-1}\sum_{i=1}^N \alpha_i^{-1}(s_i) \\ \st & \mu^T \as=s. \end{array} \right. 
\end{align}
Now we show that $\ol\alpha$ is a concave function. Let us recall that by assumptions $\alpha_i$ are convex functions and, hence, $\alpha_i^{-1}$ are concave\footnote{\label{inverse}Note that the inverse of a strictly increasing convex (resp. concave) function is a strictly
increasing concave (resp. convex) one.}. Thus, from an optimization point of view, the function $\ol\alpha$ is a \emph{perturbation} function which is known to be a concave function; see \cite[Section 5.6.1, p.\ 249]{ref:Boyd} for further details. By defining the convex\footnotemark[\getrefnumber{inverse}]
$\mathcal{K}_\infty$ function $\alpha(s)=\overline\alpha^{-1}(s)$, $\forall s\in\R_{\ge0}$, one obtains
$$\alpha(\Vert \hat h(\hat x)-h(x)\Vert^k)\le V( x, \hat x),$$satisfying inequality \eqref{e:sf:1}.
Now we show that inequality \eqref{inequality1} holds as well.
Consider any 
$x=\intcc{x_1;\ldots;x_N}\in\R^{n}$,
$\hat x=\intcc{\hat x_1;\ldots;\hat x_N}\in\R^{\hat n}$, and
$\hat u=\intcc{\hat u_{1};\ldots;\hat u_{N}}\in\R^{\hat m}$. For any $i\in[1;N]$, there exists $u_i\in\R^{m_i}$, consequently, a vector $u=\intcc{u_{1};\ldots;u_{N}}\in\R^{m}$, satisfying~\eqref{inequality1} for each pair of subsystems $\Sigma_i$ and $\hat\Sigma_i$
with the internal inputs given by $w_{ij}=h_{ji}(x_j)$ and $\hat
w_{ij}=\hat h_{ji}(\hat x_j)$.
We derive the chain of inequalities in \eqref{chain}, where we
use the inequalities \eqref{jenson} and: 
%\begin{align}\notag
%&\Vert x_1;\ldots;x_{N-1}\Vert^k\leq\sum_{i=1}^{N-1}(N-1)^{\max\{\frac{k}{2},1\}-1}\Vert x_i\Vert^{k},
%\end{align}
%and
$$
\rho_{i\mathrm{int}}(r_1+\cdots+r_{N-1})\leq\sum_{i=1}^{N-1}\rho_{i\mathrm{int}}((N-1)r_i),
$$
which are valid for any $k\geq1$, $\rho_{i\mathrm{int}}\in\mathcal{K}_\infty$, $x_i\in \R^{n_i}$, and
any $r_i\in\R_{\ge0}$, \mbox{$i\in\intco{1;N}$}. Note that if
$\rho_{i\mathrm{int}}$ satisfies the triangle inequality, one gets the less conservative inequality
$$
\rho_{i\mathrm{int}}(r_1+\cdots+r_{N-1})\leq\sum_{i=1}^{N-1}\rho_{i\mathrm{int}}(r_i),
$$
and it suffices that~\eqref{assumption2} holds instead
of~\eqref{a:1c}.
\begin{figure*}[t]
\rule{\textwidth}{0.1pt}
%\begin{small}
\begin{align}\notag
\mathcal{L} V\left(x, \hat{x}\right)
&=
\sum_{i=1}^N\mu_i\mathcal{L} V_i\left( x_i,
\hat{x}_i\right)\leq\sum_{i=1}^N\mu_i\left(-\eta_i(V_i( x_i,\hat x_i))+\rho_{i\mathrm{int}}(\left\Vert w_{i}-\hat w_{i}\right\Vert^k)+\rho_{i\mathrm{ext}}(\Vert \hat u_i\Vert^k)\right)\\\notag
&
\leq
\sum_{i=1}^N\mu_i\left(-\eta_i(V_i( x_i,\hat x_i))+\sum_{j=1,j\neq i}^N\rho_{i\mathrm{int}}((N-1)^{\max\{\frac{k}{2},1\}}\left\Vert w_{ij}-\hat w_{ij}\right\Vert^k)+\rho_{i\mathrm{ext}}(\Vert \hat u_i\Vert^k)\right)\\\notag
&
\le
\sum_{i=1}^N\mu_i\left(-\eta_i(V_i( x_i,\hat x_i))+\sum_{j=1,j\neq i}^N\rho_{i\mathrm{int}}((N-1)^{\max\{\frac{k}{2},1\}}\left\Vert y_{ji}-\hat y_{ji}\right\Vert^k)+\rho_{i\mathrm{ext}}(\Vert \hat u_i\Vert^k)\right)\\\notag
&\le
\sum_{i=1}^N\mu_i\left(-\eta_i(V_i( x_i,\hat x_i))+\sum_{j=1,j\neq i}^N\rho_{i\mathrm{int}}((N-1)^{\max\{\frac{k}{2},1\}}\Vert h_{j}(x_j)-\hat h_{j}(\hat x_j)\Vert^k)+\rho_{i\mathrm{ext}}(\Vert \hat u_i\Vert^k)\right)\\\notag
&
\le
\sum_{i=1}^N\mu_i\left(-\eta_i(V_i( x_i,\hat x_i))+\sum_{j=1,j\neq i}^N\rho_{i\mathrm{int}}\left((N-1)^{\max\{\frac{k}{2},1\}}\alpha_j^{-1}\left(V_j( x_j,\hat x_j)\right)\right)+\rho_{i\mathrm{ext}}(\Vert \hat u_i\Vert^k)\right)\\\notag
&
\leq
\sum_{i=1}^N\mu_i\left(-\widetilde\lambda_i\gamma_i(V_i( x_i,\hat x_i))+\sum_{i\neq{j},j=1}^N\delta_{ij}\gamma_j\left(V_j\left( x_j,\hat{x}_j\right)\right)+\rho_{i\mathrm{ext}}(\left\Vert \hat{u}_i\right\Vert^k)\right)\\\label{chain}
&
=
\mu^\top\left(-\Lambda+\Delta\right)\Gamma\left([V_1\left( x_1,\hat{x}_1\right);\ldots;V_N\left( x_N,\hat{x}_N\right)]\right)+\sum_{i=1}^N\mu_i\rho_{i\mathrm{ext}}(\left\Vert \hat{u}_i\right\Vert^k).
\end{align}
%\end{small}
\rule{\textwidth}{0.1pt}
\end{figure*}
Define the functions 
\begin{subequations}
\begin{align}
	\label{lambda}
	\eta(s) &\Let \left \{ \begin{array}{cc} \min\limits_{\as  {\ge 0}} & -\mu^T\left(-\Lambda+\Delta\right)\Gamma(\as) \\ \st & \mu^T \as=s, \end{array} \right. \\
	\label{rho}
	\rho_{\mathrm{ext}}(s) &\Let \left \{ \begin{array}{cc} \max\limits_{\as {\ge 0}} & \sum_{i=1}^N\mu_i\rho_{i\mathrm{ext}}(s_i) \\ \st & \|\as \| \le s. \end{array} \right. 
\end{align}
\end{subequations}
By construction, we readily have
%\begin{IEEEeqnarray*}{rCl}
%\eta(s)&\Let&\min\left\{-\mu^T\left(-\Lambda+\Delta\right)\Gamma\left(\hat s\right)\mid\mu^T\hat s=s\right\},\\
%\rho_{\mathrm{ext}}(s)&\Let&\max\left\{\sum_{i=1}^N\mu_i\rho_{i\mathrm{ext}}\left(s_i\right)\mid\Vert\intcc{s_1;\ldots;s_N}\Vert=s\right\},
%\end{IEEEeqnarray*}
%where $\hat s\in\R^N$,
%$V_{\text{vec}}(x,\hat{x})=\intcc{V_{1}(x_1,\hat{x}_1);\ldots;V_{N}(x_N,\hat{x}_N)}$,
%we obtain:
\begin{align}\notag
\dot V\left( x,\hat{x}\right)\leq-\eta\left(V\left( x,\hat{x}\right)\right)+\rho_{\mathrm{ext}}(\left\Vert \hat{u}\right\Vert^k),
\end{align}
where the functions $\eta$ and $\rho_{\mathrm{ext}}$ are $\mathcal{K}_\infty$ functions. It remains to show that $\eta$ is a convex function and $\rho_{\mathrm{ext}}$ is a concave one. Let us recall that by assumptions $\mu^T\left(-\Lambda+\Delta\right) < 0$ and $\gamma_i$, the $i$-th element of $\Gamma$, is convex. Thus, the function $\eta$ in \eqref{lambda} is a perturbation function which is a convex one. Note that by assumption each function $\rho_{i\mathrm{ext}}$ is concave, and for the same reason as above, the function \eqref{rho} is also concave. Hence, we conclude that $V$ is an SSF-M$_k$ function from $\hat \Sigma$ to $\Sigma$. 
%In the derivation
%of \eqref{chain}, we can interchange
%the role of $\hat u$ and $u$ and see that for every
%$x\in\R^n$, $\hat x\in\R^{\hat n}$, $u\in\R^{\hat m}$ there
%exists $\hat u\in\R^m$ so that~\eqref{chain} holds. It follows that
%$V$ is a \MZ{SBF-M$_k$ function between $\hat \Sigma$ and $\Sigma$}.
%which completes the proof since it is clear that $\eta,\rho_{\mathrm{ext}}\in\mathcal{K}_\infty$.
\end{proof}

\begin{remark}\label{r:diw11}
As shown in~\cite[Lemma~3.1]{DIW11}, a vector
$\mu\in\R^N_{>0}$ satisfying $\mu^T(-\Lambda+\Delta)< 0$ exists if and only if the spectral radius of
$\Lambda^{-1}\Delta$ is strictly less than one.
\end{remark}

\begin{remark}\label{r:triangle}
If the functions $\rho_{i\mathrm{int}}$, $i\in[1;N]$, satisfy the triangle
inequality, 
$\rho_{i\mathrm{int}}(a+b)\leq\rho_{i\mathrm{int}}(a)+\rho_{i\mathrm{int}}(b)$
for all non-negative values of $a$ and $b$, then the condition~\eqref{a:1c} reduces to
%\begin{small}
\begin{align}\label{assumption2}
h_{ji}\not\equiv 0\implies\rho_{i\mathrm{int}}((N-1)^{\max\{\frac{k}{2},1\}-1}\alpha_j^{-1}(s))\leq\delta_{ij}\gamma_j(s).
\end{align}
%\end{small}
\end{remark}

Figure \ref{composition1} illustrates schematically the result of
Theorem~\ref{t:ic}. 

\begin{figure}[t]
\begin{center}
\includegraphics[width=10cm]{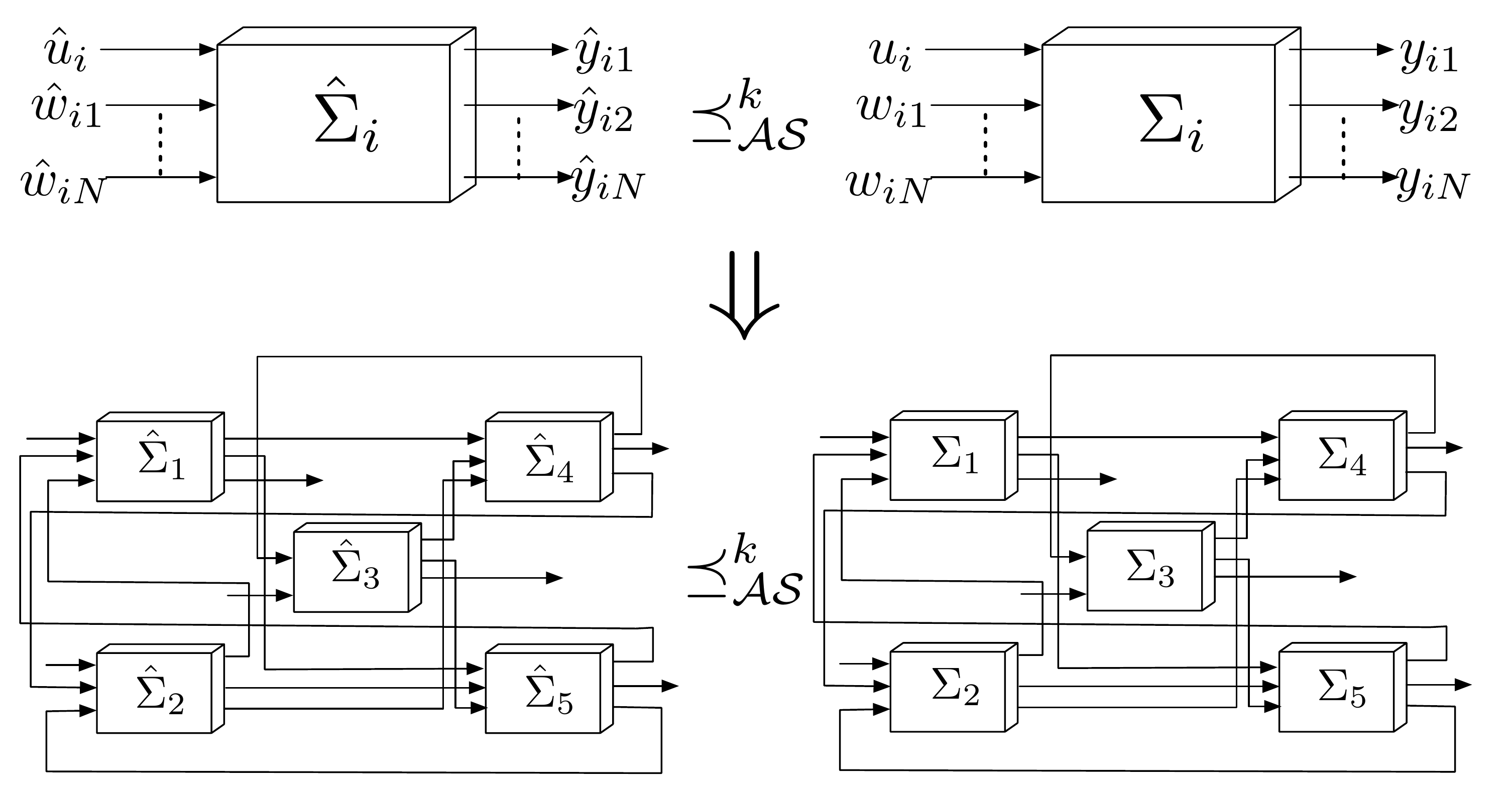}
\caption{Compositionality results.}
\label{composition1}
\end{center}
\end{figure}

\section{Jump Linear Stochastic Systems}
\label{s:lin}

In this section, we 
%follow the work of Girard and Pappas~\cite{GP09}
%and 
focus on a specific class of stochastic hybrid systems, namely, jump linear stochastic systems (JLSS) \cite{julius1} and
\emph{quadratic} SSF-M$_2$ functions $V$. 
In the first part, we assume that we are given an abstraction $\hat
\Sigma$ and provide conditions under which $V$ is an SSF-M$_2$
function. In the second part we show how to construct the
abstraction $\hat \Sigma$ together with the SSF-M$_2$ function $V$.

A JLSS is defined as a stochastic hybrid system with 
the drift, diffusion, reset, and output functions given by
%\begin{small}
\begin{align}\notag
\hspace{-10mm}\diff \xi(t)=&(A\xi(t)+B\nu(t)+D\omega(t))\diff t+E\xi(t)\diff W_t+\sum_{i=1}^{\widetilde{q}}R_i\xi(t)\diff P^i_t,\\\label{e:lin:sys}
\zeta(t)=&C\xi(t),
\end{align}where
\begin{IEEEeqnarray}{c,c,c,c,c,c,c}\nonumber
A\in\R^{n\times n},&
B\in\R^{n\times m},&
D\in\R^{n\times p},& 
E\in\R^{n\times n},&
R_i\in\R^{n\times n},&\forall i\in[1;\widetilde{q}],&
C\in\R^{q\times n}.
\end{IEEEeqnarray}
The matrices $R_i$, $\forall i\in[1;\widetilde{q}]$,
parametrize the jump associated with event $i$. 
%We use the two types of inputs, in order
%to explicitly distinguish between \emph{external inputs~$\nu(t)$} and
%\emph{internal inputs $\omega(t)$}.
We use the tuple
\begin{IEEEeqnarray*}{c}
\Sigma=(A,B,C,D,E,\mathsf{R}),
\end{IEEEeqnarray*}
where $\mathsf{R}=\left\{R_1,\ldots,R_{\widetilde{q}}\right\}$, to refer to a JLSS of
the form~\eqref{e:lin:sys}. Note that in this section we consider JLSS driven by a scalar Brownian motion for the sake of simple presentation, though the proposed results can be readily generalized for the systems driven by multi-dimensional Brownian motions as well.

\subsection{Quadratic SSF-M$_2$ functions}

In this section, we assume that for some constant
$\widehat\kappa\in\R_{>0}$ there exist a positive definite matrix $M\in\R^{n\times n}$ and matrix
$K\in\R^{m\times n}$ such that the matrix inequalities
\begin{align}\label{e:lin:con1}
C^TC&\preceq M, \\\label{e:lin:con11}
\left(A+BK+\sum_{i=1}^{\widetilde q}\lambda_iR_i\right)^TM+M\left(A+BK+\sum_{i=1}^{\widetilde q}\lambda_iR_i\right)+E^TME+\sum_{i=1}^{\widetilde q}\lambda_iR_i^TMR_i&\preceq -\widehat\kappa M,
\end{align}
hold. 

Note that condition \eqref{e:lin:con11} is sufficient and necessary for the asymptotic stability of $\Sigma=(A,B,C,0_{n\times p},E,\mathsf{R})$ equipped with a linear feedback control law $u=Kx$ in the mean square sense (second moment)\footnote{A stochastic hybrid system $\Sigma$ is said to be asymptotically stable in the mean square sense if all $\sigalg_0$-measurable initial states $a$ yield $\lim_{t\rightarrow\infty}\EE[\Vert\xi(t)\Vert^2]=0$.} as showed in the next lemma. Condition \eqref{e:lin:con1} is always satisfied for any positive definite matrix $M$ up to multiplication by some positive scalar which does not violate the satisfaction of \eqref{e:lin:con11}.

\begin{lemma}\label{lemma111}
A JLSS $\Sigma=(A,B,C,0_{n\times p},E,\mathsf{R})$ equipped with a linear feedback control law $u=Kx$ is asymptotically stable in the mean square sense if and only if there exists a positive definite matrix $M\in\R^{n\times n}$ such that the matrix inequality \eqref{e:lin:con11} is satisfied for given feedback gain $K$ and some positive constant $\widehat\kappa$.
\end{lemma}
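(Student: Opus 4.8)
The plan is to use the quadratic candidate $V(x)=x^\top M x$ and to observe that the matrix on the left-hand side of \eqref{e:lin:con11} is exactly the matrix representing the action of the infinitesimal generator on $V$. First I would specialize the generator \eqref{generator} to a single JLSS $\Sigma=(A,B,C,0_{n\times p},E,\mathsf{R})$ closed by $u=Kx$ (so the drift is $(A+BK)x$, the diffusion is $\sigma(x)=Ex$, and each reset acts as $x+r(x)\mathsf{e}_i=(I_n+R_i)x$). Using $\partial_x V=2x^\top M$ and $\partial_{x,x}V=2M$, a direct computation gives
\[
\mathcal{L}V(x)=x^\top\Big[(A+BK)^\top M+M(A+BK)+E^\top ME+\sum_{i=1}^{\widetilde q}\lambda_i\big(R_i^\top M+MR_i+R_i^\top MR_i\big)\Big]x=:x^\top\mathcal{M}x,
\]
and regrouping the linear-in-$R_i$ terms into $A_K:=A+BK+\sum_{i=1}^{\widetilde q}\lambda_iR_i$ shows that $\mathcal{M}$ is precisely the left-hand matrix of \eqref{e:lin:con11}. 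Hence the matrix inequality $\mathcal{M}\preceq-\widehat\kappa M$ is equivalent to the pointwise decrease condition $\mathcal{L}V(x)\le-\widehat\kappa V(x)$ for all $x$, and the whole lemma reduces to linking this decrease condition to mean-square stability.

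For sufficiency (\eqref{e:lin:con11} $\Rightarrow$ mean-square stability), I would apply Dynkin's formula to $V(\xi(t))$, which is legitimate since $V$ has polynomial growth and the coefficients are globally Lipschitz, yielding $\tfrac{\diff}{\diff t}\EE[V(\xi(t))]=\EE[\mathcal{L}V(\xi(t))]\le-\widehat\kappa\,\EE[V(\xi(t))]$. Gr\"onwall's inequality then gives $\EE[V(\xi(t))]\le\mathsf{e}^{-\widehat\kappa t}\EE[V(a)]$, and sandwiching with $\lambda_{\min}(M)\Vert x\Vert^2\le V(x)\le\lambda_{\max}(M)\Vert x\Vert^2$ produces $\EE[\Vert\xi(t)\Vert^2]\le\tfrac{\lambda_{\max}(M)}{\lambda_{\min}(M)}\mathsf{e}^{-\widehat\kappa t}\EE[\Vert a\Vert^2]\to0$, i.e.\ mean-square asymptotic stability.

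The necessity direction is the main obstacle and would go through the second-moment dynamics. Setting $Q(t):=\EE[\xi(t)\xi(t)^\top]$, applying the generator to the coordinate functions $x\mapsto x^\top Nx$ shows that $Q$ obeys the linear matrix ODE $\dot Q=\mathcal{G}^\ast(Q)$, where $\mathcal{G}^\ast$ is the adjoint of the Lyapunov operator $\mathcal{G}(N):=\mathcal{M}(N)$ (the map from the first paragraph) with respect to $\langle X,Y\rangle=\Tr(XY)$. Since $\EE[\Vert\xi(t)\Vert^2]=\Tr(Q(t))$ and $Q(t)\succeq0$, stability is equivalent to $Q(t)\to0$ for every positive semidefinite initial matrix; because the rank-one matrices $aa^\top$ span the symmetric matrices, this is equivalent to $\mathcal{G}^\ast$ — and hence its adjoint $\mathcal{G}$, which shares the same spectrum — being Hurwitz.

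The delicate step is to turn Hurwitzness of $\mathcal{G}$ into a genuinely positive definite certificate $M$, and this is where the positivity structure is essential. Writing $\mathcal{G}=\mathcal{D}+\Psi$ with $\mathcal{D}(N)=A_K^\top N+NA_K$ (generating the congruence semigroup $N\mapsto\mathsf{e}^{A_K^\top t}N\mathsf{e}^{A_K t}$, which preserves the positive semidefinite cone) and the completely positive map $\Psi(N)=E^\top NE+\sum_{i}\lambda_i R_i^\top NR_i$, a Trotter product argument shows that the semigroup $\mathsf{e}^{t\mathcal{G}}$ maps the positive semidefinite cone into itself. Fixing any $Q\succ0$ and setting $M:=\int_0^\infty\mathsf{e}^{t\mathcal{G}}(Q)\,\diff t$ — a convergent integral because $\mathcal{G}$ is Hurwitz — gives $M\succ0$ (the integrand is positive definite near $t=0$) together with $\mathcal{G}(M)=\big[\mathsf{e}^{t\mathcal{G}}(Q)\big]_0^\infty=-Q\prec0$. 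Choosing $\widehat\kappa:=\lambda_{\min}(Q)/\lambda_{\max}(M)>0$ then yields $\widehat\kappa M\preceq Q$, i.e.\ $\mathcal{G}(M)\preceq-\widehat\kappa M$, which is exactly \eqref{e:lin:con11}. The crux of the argument is this cone-preserving property of $\mathsf{e}^{t\mathcal{G}}$, which is precisely what upgrades spectral Hurwitzness into a positive definite Lyapunov matrix.
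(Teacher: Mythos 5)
Your proof is correct, and it follows the same backbone as the paper's argument --- both reduce the problem to the linear matrix ODE for the second moment $\Phi(t)=\EE[\xi(t)\xi^T(t)]$ (your $\dot Q=\mathcal{G}^\ast(Q)$ is exactly the paper's equation \eqref{M}, with $\mathcal{G}$ the Lyapunov operator whose value at $M$ is the left-hand side of \eqref{e:lin:con11}). The difference is in how much is actually proved. The paper derives \eqref{M} and then disposes of both directions in one sentence by asserting, ``from linear system theory,'' that asymptotic stability of \eqref{M} is equivalent to $\Tr(M\Phi(t))$ being a Lyapunov function for some positive definite $M$ satisfying \eqref{e:lin:con11}. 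You supply the two ingredients that this appeal actually hides: for sufficiency, the Dynkin--Gr\"onwall--sandwich argument (equivalently, $\tfrac{\diff}{\diff t}\Tr(M\Phi)=\Tr(\mathcal{G}(M)\Phi)\le-\widehat\kappa\Tr(M\Phi)$); and, more importantly, for necessity the converse-Lyapunov construction $M=\int_0^\infty \mathsf{e}^{t\mathcal{G}}(Q)\,\diff t$, whose positive definiteness rests on the cone-preservation of $\mathsf{e}^{t\mathcal{G}}$ obtained by Trotter-splitting $\mathcal{G}$ into the congruence semigroup generated by $A_K=A+BK+\sum_i\lambda_iR_i$ and the completely positive perturbation $E^\top(\cdot)E+\sum_i\lambda_iR_i^\top(\cdot)R_i$. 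This is precisely the nontrivial point --- for a general Hurwitz operator on symmetric matrices there is no reason a positive definite certificate should exist, and it is the positivity structure of $\mathcal{G}$ that rescues the claim --- so your version is a genuine strengthening of the paper's sketch rather than a detour. Two minor points: the passage from ``$\EE[\Vert\xi(t)\Vert^2]\to0$ for all initial states'' to ``$\mathcal{G}^\ast$ Hurwitz'' implicitly restricts to initial conditions with finite second moment (as does the paper), and your identification of $\mathcal{L}V$ with the left-hand side of \eqref{e:lin:con11} via the regrouping $R_i^\top M+MR_i+R_i^\top MR_i$ is exactly the computation the paper performs inside the proof of Theorem \ref{t:lin:suf} (with $P=0$, $\hat{x}$ absent), so it is consistent with the rest of the paper.
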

The proof of Lemma \ref{lemma111} is provided in the Appendix.

The matrices $K$ and $M$ in \eqref{e:lin:con1} and \eqref{e:lin:con11} can be computed jointly
using semidefinite programming as explained in the following lemma.

\begin{lemma}\label{l:lmi}
Denoting $\ol K=KM^{-1}$ and $\ol M=M^{-1}$, matrix inequalities \eqref{e:lin:con1} and \eqref{e:lin:con11} are equivalent to the following linear matrix inequalities:
\begin{align}\label{e:lin:con1hat}
\begin{bmatrix}\ol M & \ol MC^T \\ C\ol M & I_q\end{bmatrix}\succeq0\\\label{e:lin:con11hat}
\begin{bmatrix}\ol M & 0 & \cdots & 0 & E\ol M \\ 0 & \ol M & \ddots & \vdots & \lambda_{\widetilde q}^{\frac{1}{2}}R_{\widetilde q}\ol M \\ \vdots & \ddots & \ddots & 0 & \vdots \\ 0 & \cdots & 0 & \ol M & \lambda_1^{\frac{1}{2}}R_1\ol M \\ \ol ME^T & \lambda_{\widetilde q}^{\frac{1}{2}}\ol MR_{\widetilde q}^T & \cdots & \lambda_1^{\frac{1}{2}}\ol MR_1^T & \ol Q \end{bmatrix}\succeq0,
\end{align}
where $0$'s denote zero matrices of appropriate dimensions and 
\begin{align*}
\ol Q\Let&-\widehat\kappa\ol M-\ol M\Big(A+\sum_{i=1}^{\widetilde q}\lambda_iR_i\Big)^T-\Big(A+\sum_{i=1}^{\widetilde q}\lambda_iR_i\Big)\ol M-\ol K^TB^T-B\ol K.
\end{align*}
\end{lemma}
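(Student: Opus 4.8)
The plan is to derive each linear matrix inequality from its nonlinear counterpart by a single congruence transformation with $\ol M=M^{-1}$ followed by one Schur complement, and — crucially for an \emph{equivalence} rather than a mere implication — to observe that both operations are reversible. Since $M\succ0$ is equivalent to $\ol M\succ0$, and congruence by the invertible matrix $M$ undoes congruence by $\ol M$, every step I take can be read in both directions.

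First I would handle \eqref{e:lin:con1}. Because $M\succ0$, conjugating $C^TC\preceq M$ by the symmetric positive definite matrix $\ol M$ preserves the inequality and gives $\ol M C^TC\ol M\preceq\ol M$, i.e. $\ol M-\ol M C^T(I_q)^{-1}C\ol M\succeq0$. The Schur complement lemma (see \cite[Section A.5.5]{ref:Boyd}) with respect to the block $I_q\succ0$ then shows this is equivalent to \eqref{e:lin:con1hat}, and conjugating back by $M$ recovers $C^TC\preceq M$, so the two are equivalent.

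For \eqref{e:lin:con11} I would set $\mathcal A\Let A+BK+\sum_{i=1}^{\widetilde q}\lambda_iR_i$ and rewrite the inequality as $-\widehat\kappa M-\mathcal A^TM-M\mathcal A-E^TME-\sum_i\lambda_iR_i^TMR_i\succeq0$. Conjugating by $\ol M$ and using the substitution $\mathcal A\ol M=(A+\sum_i\lambda_iR_i)\ol M+B\ol K$ — which is exactly where $\ol K=KM^{-1}$ enters, via $BKM^{-1}=B\ol K$ — collects the affine part into the block $\ol Q$ of the statement, while each quadratic term becomes $\ol M E^TME\ol M=(E\ol M)^T\ol M^{-1}(E\ol M)$ and $\lambda_i\ol M R_i^TMR_i\ol M=(\lambda_i^{1/2}R_i\ol M)^T\ol M^{-1}(\lambda_i^{1/2}R_i\ol M)$. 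The conjugated condition is thus $\ol Q-\mathcal B^T\mathcal D^{-1}\mathcal B\succeq0$, where $\mathcal D=\diag(\ol M,\ldots,\ol M)$ has $\widetilde q+1$ blocks and $\mathcal B$ stacks $[E\ol M;\lambda_{\widetilde q}^{1/2}R_{\widetilde q}\ol M;\ldots;\lambda_1^{1/2}R_1\ol M]$. A final Schur complement with respect to the positive definite block $\mathcal D$ produces exactly \eqref{e:lin:con11hat}.

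The argument has no genuine obstacle; the one thing to be careful about is bookkeeping. I must verify that the off-diagonal blocks in \eqref{e:lin:con11hat} are positioned so that $\mathcal B^T\mathcal D^{-1}\mathcal B$ reproduces $E^TME+\sum_i\lambda_iR_i^TMR_i$ after undoing the conjugation, and that the substitution $BKM^{-1}=B\ol K$ is applied symmetrically on both $\mathcal A\ol M$ and $\ol M\mathcal A^T$ so that $\ol Q$ acquires the symmetric term $-B\ol K-\ol K^TB^T$. Since the congruence and both Schur complements are equivalences under $\ol M\succ0$, the chain closes in both directions and the stated equivalence follows.
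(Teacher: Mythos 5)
Your proposal is correct and is exactly the argument the paper has in mind: the paper omits the proof, stating only that it "is a simple consequence of using Schur complements," and your congruence-by-$\ol M$ followed by a Schur complement with respect to $I_q$ (resp. $\diag(\ol M,\ldots,\ol M)$) is that standard argument, with the reversibility of both steps under $M\succ0$ correctly noted to obtain equivalence rather than mere implication.
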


The proof is a simple consequence of using Schur complements \cite{ref:Boyd} and is omitted here for the sake of brevity.

Here, we consider a quadratic SSF-M$_2$ function of the following form 
\begin{align}\label{e:lin:sf}
V(x,\hat x)=(x-P\hat x)^TM(x-P\hat x),
%\tilde V(x,\hat x)&=\big((\hat Px-\hat x)^T\tilde M(\hat Px-\hat x)\big)^{\tfrac{1}{2}}
\end{align}
%with the associated linear interface $\nu_{\hat \nu}$ given by
%\begin{IEEEeqnarray}{c}\label{e:lin:int}
%\nu_{\hat \nu}(x,\hat x,\hat u,\hat w)=K(x-P\hat x)+Q\hat x+R\hat u+S \hat w
%\end{IEEEeqnarray}
where $P$ is a matrix of appropriate
dimension. 
%$Q$, $R$, and $S$ are matrices of appropriate
%dimensions. 
Assume
that the equalities 
\begin{IEEEeqnarray}{rCl}\label{e:lin:con2} \IEEEyesnumber
\IEEEyessubnumber\label{e:lin:con2a} AP&=&P\hat A-BQ\\
\IEEEyessubnumber\label{e:lin:con2b} D&=&P\hat D-BS\\
\IEEEyessubnumber\label{e:lin:con2c} CP&=&\hat C\\
%\IEEEyessubnumber\label{e:lin:con2d} \hat B&=&[\hat PB\;\; \hat PAG]\\
%\IEEEyessubnumber\label{e:lin:con2e} I_{\hat n}&=& \hat PP\\
%\IEEEyessubnumber\label{e:lin:con2f}  I_n&=&P\hat P+GF\\
\IEEEyessubnumber\label{e:lin:con2g}  EP&=&P\hat E\\
\IEEEyessubnumber\label{e:lin:con2h}  R_iP&=&P \hat R_i,~~\forall i\in[1;\widetilde q],
\end{IEEEeqnarray}
%and the inequality \eqref{e:lin:con2g}
%\begin{figure*}
%\begin{small}
%\begin{align}\notag
%&-\widehat\kappa\begin{bmatrix}M & -MP \\ -P^TM & P^TMP\end{bmatrix}+\frac{1}{2}\begin{bmatrix}E^T & 0 \\ 0 & \hat E^T\end{bmatrix}\begin{bmatrix}M & -MP \\ -P^TM & P^TMP\end{bmatrix}\begin{bmatrix}E & 0 \\ 0 & \hat E\end{bmatrix}+\frac{1}{2}\sum_{i=1}^{\widetilde q}\lambda_i\begin{bmatrix}R_i^T & 0 \\ 0 & \hat R_i^T\end{bmatrix}\begin{bmatrix}M & -MP \\ -P^TM & P^TMP\end{bmatrix}\\\label{e:lin:con2g}&+\frac{1}{2}\begin{bmatrix}M & -MP \\ -P^TM & P^TMP\end{bmatrix}\sum_{i=1}^{\widetilde q}\lambda_i\begin{bmatrix}R_i & 0 \\ 0 & \hat R_i\end{bmatrix}+\frac{1}{2}\sum_{i=1}^{\widetilde q}\lambda_i\begin{bmatrix}R_i^T & 0 \\ 0 & \hat R_i^T\end{bmatrix}\begin{bmatrix}M & -MP \\ -P^TM & P^TMP\end{bmatrix}\begin{bmatrix}R_i & 0 \\ 0 & \hat R_i\end{bmatrix}\preceq -\breve\lambda\begin{bmatrix}M & -MP \\ -P^TM & P^TMP\end{bmatrix}
%\end{align}
%\end{small}
%\end{figure*}
hold for some matrices $Q$ and $S$ of appropriate dimensions. In the following theorem, we show that those conditions imply that \eqref{e:lin:sf} is an SSF-M$_2$ function from $\hat \Sigma$ to $\Sigma$. 
%with the interface given in~\eqref{e:lin:int}.

\begin{theorem}\label{t:lin:suf}
Consider two JLSS $\Sigma=(A,B,C,D,E,\mathsf{R})$
and $\hat \Sigma=(\hat A,\hat B,\hat C,\hat
D,\hat E,\hat {\mathsf{R}})$ with $p=\hat p$ and  $q=\hat q$. Suppose that there exist matrices 
$M$, $K$, $P$, $Q$, and $S$ satisfying~\eqref{e:lin:con1},~\eqref{e:lin:con11}, and~\eqref{e:lin:con2}, for some constant $\widehat\kappa\in\R_{>0}$. Then, $V$ defined in~\eqref{e:lin:sf} is an SSF-M$_2$ function from~$\hat\Sigma$ to~$\Sigma$.
% with the
%interface $\nu_{\hat \nu}$ given in~\eqref{e:lin:int}.
\end{theorem}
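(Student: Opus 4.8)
The plan is to verify the two defining properties of a SSF-M$_2$ function from Definition~\ref{d:sf}; the required polynomial growth rate is immediate since $V$ in~\eqref{e:lin:sf} is quadratic. Throughout I abbreviate $e\Let x-P\hat x$, so that $V(x,\hat x)=e^TMe$. I would first dispatch the output inequality~\eqref{e:sf:1}: since $h(x)=Cx$ and $\hat h(\hat x)=\hat C\hat x$, the equality~\eqref{e:lin:con2c} gives $h(x)-\hat h(\hat x)=Cx-CP\hat x=Ce$, whence $\Vert h(x)-\hat h(\hat x)\Vert^2=e^TC^TCe\le e^TMe=V(x,\hat x)$ by~\eqref{e:lin:con1}. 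Thus~\eqref{e:sf:1} holds with the convex $\mathcal K_\infty$ function $\alpha=\mathrm{id}$.

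The core of the argument is inequality~\eqref{inequality1}. Its quantifier pattern $\forall\hat u\,\forall\hat w\,\exists u\,\forall w$ permits choosing $u$ as an (affine, hence admissible) function of $(x,\hat x,\hat w)$, and I would take the \emph{interface} $u\Let Ke+Q\hat x+S\hat w$. Plugging the JLSS vector fields into the drift block of the generator~\eqref{generator} and writing $Ax=Ae+AP\hat x$, the identities~\eqref{e:lin:con2a} and~\eqref{e:lin:con2b} force the $\hat x$- and $\hat w$-dependent terms to cancel, leaving
\begin{align*}
f(x,u,w)-P\hat f(\hat x,\hat u,\hat w)=(A+BK)e+D(w-\hat w)-P\hat B\hat u,
\end{align*}
so the first-order part of $\mathcal L V$ equals $2e^TM(A+BK)e+2e^TMD(w-\hat w)-2e^TMP\hat B\hat u$.

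Next I would handle the second-order and jump contributions. Because the diffusion is driven by a scalar Brownian motion, the diffusion block is the rank-one quadratic form $\tfrac12\,[x^TE^T,\,\hat x^T\hat E^T]\,H\,[Ex;\hat E\hat x]$, with $H$ the Hessian of $V$; invoking $EP=P\hat E$ from~\eqref{e:lin:con2g} collapses it to $e^TE^TMEe$. For each jump, $R_iP=P\hat R_i$ from~\eqref{e:lin:con2h} yields $(x+R_ix)-P(\hat x+\hat R_i\hat x)=(I+R_i)e$, so the jump sum in~\eqref{generator} becomes $\sum_{i=1}^{\widetilde q}\lambda_i\,e^T\big(R_i^TM+MR_i+R_i^TMR_i\big)e$. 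Collecting all $e$-quadratic terms reproduces exactly the matrix on the left-hand side of~\eqref{e:lin:con11}, which is $\preceq-\widehat\kappa M$; hence this part is bounded by $-\widehat\kappa\,e^TMe=-\widehat\kappa V(x,\hat x)$.

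It then remains to absorb the two cross terms. Applying Young's inequality $2a^Tb\le\pi\Vert a\Vert^2+\pi^{-1}\Vert b\Vert^2$ with $a=M^{1/2}e$ to both $2e^TMD(w-\hat w)$ and $-2e^TMP\hat B\hat u$, and choosing the free constants so their sum stays below $\widehat\kappa$, I obtain $\mathcal L V\le-\eta(V)+\rho_{\mathrm{ext}}(\Vert\hat u\Vert^2)+\rho_{\mathrm{int}}(\Vert w-\hat w\Vert^2)$ with $\eta,\rho_{\mathrm{ext}},\rho_{\mathrm{int}}$ all \emph{linear} (thus simultaneously convex and concave) $\mathcal K_\infty$ functions, matching Definition~\ref{d:sf}. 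The main obstacle is the residual term $-2e^TMP\hat B\hat u$: since the hypotheses impose no relation on $\hat B$ (unlike on $\hat A,\hat D,\hat E,\hat R_i$), it cannot be eliminated by the interface and must instead be dominated by the external-input gain $\rho_{\mathrm{ext}}$; the careful use of the commutation identities~\eqref{e:lin:con2g}--\eqref{e:lin:con2h} inside the diffusion and jump terms is the other delicate point, as they are precisely what make the $e$-quadratic form close up into~\eqref{e:lin:con11}.
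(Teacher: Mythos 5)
Your proof is correct and follows essentially the same route as the paper: same output-inequality argument via~\eqref{e:lin:con2c} and~\eqref{e:lin:con1}, same interface-based cancellation via~\eqref{e:lin:con2a}--\eqref{e:lin:con2b}, same collapse of the diffusion and jump blocks via~\eqref{e:lin:con2g}--\eqref{e:lin:con2h} into the matrix of~\eqref{e:lin:con11}, and the same Young-inequality absorption of the cross terms into linear gains. The only difference is that you take the interface without the $\widetilde R\hat u$ feedthrough term (i.e., $\widetilde R=0$), which the paper keeps as a free parameter so that $\rho_{\mathrm{ext}}$ can later be minimized via~\eqref{e:lin:con:tildeR}; this does not affect the validity of the theorem.
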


\begin{proof}
%First we show that $V$ is a SSF-M$_1$ function from~$\hat\Sigma$ to~$\Sigma$. 
Note that $V$ is twice continuously differentiable.
%\footnote{Here, we just need $V$ to be twice continuously differentiable over $\R^n\times\R^{\hat n}\backslash V_0$, where $V_0=\left\{(x,\hat x)\in\R^n\times\R^{\hat n}\,\,|\,\,V(x,\hat x)=0\right\}$.}. 
%and
%$\nu_{\hat\nu}$ is globally Lipschitz continuous in its first argument.
We show that for every
$x\in\R^n$, $\hat x\in\R^{\hat n}$, $\hat u\in\R^{\hat m}$, $\hat w\in\R^p$, there exists $u\in\R^m$ such that for all $w\in\R^p$, $V$ satisfies $\Vert Cx-\hat C\hat x\Vert^2\le V(x,\hat
x)$ and
%\begin{small}
\begin{align}\notag
\mathcal{L} V(x,\hat x)
\Let&
\frac{\partial V(x,\hat x)}{\partial x}
(Ax+Bu+Dw)+
\frac{\partial V(x,\hat x)}{\partial \hat x}
(\hat A\hat x+\hat B\hat u+\hat D\hat w)\\\notag&+\frac{1}{2} \text{Tr} \left(\begin{bmatrix} Ex \\ \hat E\hat{x} \end{bmatrix}\left[x^TE^T~~\hat x^T\hat E^T\right] \begin{bmatrix}
\partial_{x,x} V & \partial_{x,\hat{x}} V \\ \partial_{\hat{x},x} V & \partial_{\hat{x},\hat{x}} V
\end{bmatrix}	\right)+\sum_{i=1}^{\widetilde q}\lambda_i(V(x+R_ix,\hat{x}+\hat R_i\hat{x})-V(x,\hat{x}))\\\label{e:t:lin:main:1}
\le&-(\widehat\kappa-\pi) V(x,\hat x)+\frac{2\Vert\sqrt{M}D\Vert^2}{\pi}\Vert w-\hat w\Vert^2+\frac{2\Vert\sqrt{M}(B\widetilde R-P\hat B)\Vert^2}{\pi}\Vert\hat u\Vert^2,
\end{align}
%\end{small}
for any positive constant $\pi<\widehat\kappa$ and some matrix $\widetilde R$ of appropriate dimension.

From~\eqref{e:lin:con2c}, we have $\Vert Cx-\hat C\hat x\Vert^2=(x-P\hat
x)^TC^TC(x-P\hat x)$ and using $M\succeq C^TC$, it can be readily verified that 
$\Vert Cx-\hat C\hat x\Vert^2\le V(x,\hat x)$ holds for all $x\in\R^n$, $\hat
x\in\R^{\hat n}$. We proceed with showing the inequality in~\eqref{e:t:lin:main:1}. 
Note that 
\begin{align*}
&\partial_x V(x,\hat x)
= 
2(x-P\hat x)^TM,~ 
\partial_{\hat x} V(x,\hat x)
=
-2(x-P\hat x)^TMP,~\partial_{x,x} V(x,\hat x)
= 2M,~\text{and}\\&\partial_{\hat x,\hat x} V(x,\hat x) =P^T \partial_{x,x} V(x,\hat x)  P,~
\partial_{x,\hat x} V(x,\hat x)= \left(\partial_{\hat x, x} V(x,\hat x)\right)^T=-\partial_{x,x} V(x,\hat x) P
\end{align*}
holds. Given any $x\in\R^n$, $\hat x\in\R^{\hat n}$, $\hat u\in\R^{\hat m}$, and $\hat w\in\R^p$, we choose $u\in\R^{m}$ via the following linear \emph{interface} function:
\begin{align}\label{e:lin:int}
u=\nu_{\hat \nu}(x,\hat x,\hat u,\hat w):=K(x-P\hat x)+Q\hat x+\widetilde R\hat u+S \hat w,
\end{align}
for some matrix $\widetilde R$ of appropriate dimension.

By using the equations~\eqref{e:lin:con2a}
and~\eqref{e:lin:con2b} and the definition of the interface function in~\eqref{e:lin:int}, we simplify
\begin{IEEEeqnarray*}{l}
Ax+B\nu_{\hat \nu}(x,\hat x, \hat u,\hat w)+Dw-P(\hat A\hat x+\hat
B\hat u+\hat D\hat w)
\end{IEEEeqnarray*}
to $(A+BK)(x-P\hat x)+D(w-\hat w)+(B\widetilde R-P\hat B)\hat u$
and obtain the following expression for $\mathcal{L} V(x,\hat x)$:
%\begin{small}
\begin{IEEEeqnarray*}{l}
\mathcal{L} V(x,\hat x)=2(x-P\hat x)^TM
\big[
(A+BK)(x-P\hat x)
+
D(w-\hat w)
+
(B\widetilde R-P\hat B)\hat u
\big]\\
+\begin{bmatrix}x \\ \hat x\end{bmatrix}^T\begin{bmatrix}E^T & 0 \\ 0 & \hat E^T\end{bmatrix}\begin{bmatrix}M & -MP \\ -P^TM & P^TMP\end{bmatrix}\begin{bmatrix}E & 0 \\ 0 & \hat E\end{bmatrix}\begin{bmatrix}x \\ \hat x\end{bmatrix}+\begin{bmatrix}x \\ \hat x\end{bmatrix}^T\sum_{i=1}^{\widetilde q}\lambda_i\begin{bmatrix}R_i^T & 0 \\ 0 & \hat R_i^T\end{bmatrix}\begin{bmatrix}M & -MP \\ -P^TM & P^TMP\end{bmatrix}\begin{bmatrix}x \\ \hat x\end{bmatrix}\\
+\begin{bmatrix}x \\ \hat x\end{bmatrix}^T\begin{bmatrix}M & -MP \\ -P^TM & P^TMP\end{bmatrix}\sum_{i=1}^{\widetilde q}\lambda_i\begin{bmatrix}R_i & 0 \\ 0 & \hat R_i\end{bmatrix}\begin{bmatrix}x \\ \hat x\end{bmatrix}\\
+\begin{bmatrix}x \\ \hat x\end{bmatrix}^T\sum_{i=1}^{\widetilde q}\lambda_i\begin{bmatrix}R_i^T & 0 \\ 0 & \hat R_i^T\end{bmatrix}\begin{bmatrix}M & -MP \\ -P^TM & P^TMP\end{bmatrix}\begin{bmatrix}R_i & 0 \\ 0 & \hat R_i\end{bmatrix}\begin{bmatrix}x \\ \hat x\end{bmatrix},
\end{IEEEeqnarray*}
%\end{small}
where $0$'s denote zero matrices of appropriate dimensions. We use~\eqref{e:lin:con2g} and~\eqref{e:lin:con2h} to obtain the following expression for $\mathcal{L} V(x,\hat x)$:
%\begin{small}
\begin{align*}
\mathcal{L} V(x,\hat x)=& (x-P\hat x)^T\Big[\Big(A+BK+\sum_{i=1}^{\widetilde q}\lambda_iR_i\Big)^TM+M\Big(A+BK+\sum_{i=1}^{\widetilde q}\lambda_iR_i\Big)\\&+E^TME+\sum_{i=1}^{\widetilde q}\lambda_iR_i^TMR_i\Big](x-P\hat x)+2(x-P\hat x)^TM
\big[
D(w-\hat w)
+
(B\widetilde R-P\hat B)\hat u
\big].
\end{align*}
%\end{small}
Using Young's inequality \cite{Young225} as $$ab\leq \frac{\epsilon}{2}a^2+\frac{1}{2\epsilon}b^2,$$ for any $a,b\geq0$ and any $\epsilon>0$,
and with the help of Cauchy-Schwarz inequality and \eqref{e:lin:con11} one gets the following upper bound for $\mathcal{L} V(x,\hat x)$:
\begin{align*}
&\mathcal{L} V(x,\hat x)\le -\widehat\kappa V(x,\hat x)+\pi V(x,\hat x)+\frac{2\Vert\sqrt{M}D\Vert^2}{\pi}\Vert w-\hat w\Vert^2+\frac{2\Vert\sqrt{M}(B\widetilde R-P\hat B)\Vert^2}{\pi}\Vert\hat u\Vert^2,
\end{align*}
for any positive constant $\pi<\widehat\kappa$.

Using this computed upper bound, we obtain~\eqref{e:t:lin:main:1} which completes the proof. Note that the $\mathcal{K}_\infty$ functions
$\alpha$, $\eta$, $\rho_{\mathrm{ext}}$, and $\rho_{\mathrm{int}}$,
in Definition~\ref{d:sf} associated with the SSF-M$_2$ function
in \eqref{e:lin:sf}
are given by $\alpha(s):=s$, $\eta(s):=(\widehat\kappa-\pi) s$, $\rho_{\mathrm{ext}}(s):=\frac{2\Vert\sqrt{M}(B\widetilde R-P\hat B)\Vert^2}{\pi} s$ and
$\rho_{\mathrm{int}}(s):=\frac{2\Vert\sqrt{M}D\Vert^2}{\pi} s$, $\forall s\in\R_{\ge0}$.

\end{proof}

\begin{remark}\label{bound}
Using the linear functions $\alpha$ and $\eta$, as computed in Theorem \ref{t:lin:suf}, the functions $\beta$, $\gamma_{\mathrm{ext}}$, and $\gamma_{\mathrm{int}}$, appearing in Theorem \ref{theorem1}, are simplified as the following: $\beta(r,t)\Let r\mathsf{e}^{-(\widehat\kappa-\pi) t}$, $\gamma_{\mathrm{ext}}(r)\Let \frac{1}{\widehat\kappa-\pi}\rho_{\mathrm{ext}}(r)$, and $\gamma_{\mathrm{int}}(r)\Let \frac{1}{\widehat\kappa-\pi}\rho_{\mathrm{int}}(r)$ for any $r,t\in\R_{\geq0}$.
\end{remark}

\begin{remark}
Note that Theorem~\ref{t:lin:suf} does not impose any
condition on matrix $\widetilde R$. Similar to the results in~\cite[Proposition 1]{GP09} for the deterministic case, we propose a
choice of $\widetilde R$ which minimize function $\rho_{\mathrm{ext}}$ for $V$. The choice of $\widetilde R$ minimizing $\rho_{\mathrm{ext}}$ is given by
\begin{IEEEeqnarray}{c}\label{e:lin:con:tildeR}
\widetilde R=(B^TMB)^{-1} B^TM P\hat B.
\end{IEEEeqnarray}
\end{remark}

\begin{remark}\label{r:cutConnection}
Consider $\Sigma_i=(A_i,B_i,C_i,D_i,E_i,\mathsf{R}_i)$ and its abstraction $\hat\Sigma_i=(\hat A_i,\hat B_i,\hat C_i,\hat D_i,\hat E_i,\hat{\mathsf{R}}_i)$. Assume $D_i=\intcc{d_i^1\cdots d_i^p}$ and $\hat D_i=\intcc{\hat d_i^1 \cdots \hat d_i^p}$. Using equation \eqref{e:lin:con2b}, one can readily conclude that if $d_i^j\in \im B$, for some $j\in\intcc{1;p}$, then the corresponding $\hat d_i^j$ can be chosen as $\hat d_i^j=0_{\hat n}$. This choice for columns of $\hat D$ makes the interconnection topology of abstract subsystems potentially simpler and, hence, their analysis easier. We refer the interested readers to Section \ref{case_study} for an example of such choice for $\hat D$.
\end{remark}

As of now, we derived various conditions on the
original system $\Sigma$, the abstraction $\hat \Sigma$, and the matrices appearing in~\eqref{e:lin:sf} and~\eqref{e:lin:int}, to
ensure that~\eqref{e:lin:sf} is
an SSF-M$_2$ function from $\hat\Sigma$ to $\Sigma$ with the corresponding interface function in \eqref{e:lin:int} lifting any control policy designed for $\hat\Sigma$ to the one for $\Sigma$.
However, those conditions do not impose any requirements on the abstract external
input matrix $\hat B$. As an example, one can choose $\hat B=I_{\hat n}$ which makes the abstract system $\hat \Sigma$ fully actuated and, hence, the synthesis problem over $\hat \Sigma$ much easier.
Similar to~\cite[Subsection 4.1]{GP09} in the context of deterministic control systems, one can also choose an external input matrix $\hat B$ which
\emph{preserves} all the behaviors of the original JLSS $\Sigma$ on the
abstraction $\hat \Sigma$: for every trajectory
$(\xi,\zeta,\nu,\omega)$ of $\Sigma$ there exists a trajectory
$(\hat \xi,\hat \zeta,\hat \nu,\hat \omega)$ of $\hat \Sigma$ such
that $\hat\zeta=\zeta$ $\PP$-a.s..

Note that using the following choice of external input matrix $\hat B$, the results in \cite{majid17} for the linear deterministic control system are fully recovered by the corresponding ones here providing that the JLSS is not affected by any noise, implying that $E$, $\hat E$, $R_i$, and $\hat R_i$, $\forall i\in[1;\widetilde q]$, are identically zero.

\begin{theorem}\label{t:B}
Consider two JLSS $\Sigma=(A,B,C,D,E,\mathsf{R})$
and $\hat \Sigma=(\hat A,\hat B,\hat C,\hat
D,\hat E,\hat {\mathsf{R}})$ with $p=\hat p$ and  $q=\hat q$. Suppose that there
exist matrices $P$, $Q$, and $S$ satisfying~\eqref{e:lin:con2} and that the abstract external input matrix $\hat B$ is
given by
\begin{IEEEeqnarray}{c}\label{e:lin:con:B}
\hat B=[\hat PB\;\; \hat PAG],
\end{IEEEeqnarray}
where $\hat P$ and $G$ are assumed to satisfy 
\begin{IEEEeqnarray}{rCl}\label{e:lin:con3} \IEEEyesnumber
\IEEEyessubnumber\label{e:lin:con3a} C&=&\hat C\hat P\\
\IEEEyessubnumber\label{e:lin:con3b} I_n&=&P\hat P+GF\\
\IEEEyessubnumber\label{e:lin:con3c} I_{\hat n}&=& \hat PP\\
\IEEEyessubnumber\label{e:lin:con3d} 0_{\hat n\times n}&=&\hat PEGF\\
\IEEEyessubnumber\label{e:lin:con3e}  0_{\hat n\times n}&=&\hat PR_iGF,~~\forall i\in[1;\widetilde q],
\end{IEEEeqnarray}
for some matrix $F$. Then, for every trajectory
  $(\xi,\zeta,\nu,\omega)$~of~$\Sigma$ there exists a trajectory
  $(\hat \xi,\hat \zeta,\hat \nu,\hat \omega)$ of $\hat \Sigma$ so
  that $\zeta=\hat \zeta$ holds $\PP$-a.s..% for all~$t\in\R_{\ge0}$.
\end{theorem}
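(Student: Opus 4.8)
The plan is to exhibit, for a given trajectory $(\xi,\zeta,\nu,\omega)$ of $\Sigma$, an explicit trajectory of $\hat\Sigma$ whose output coincides with $\zeta$. The natural candidate is the \emph{projected} solution process $\hat\xi\Let\hat P\xi$, together with $\hat\omega\Let\omega$ (legitimate since $p=\hat p$) and a suitably chosen external input $\hat\nu$. Output matching is then immediate: by \eqref{e:lin:con3a} one has $\hat\zeta=\hat C\hat\xi=\hat C\hat P\xi=C\xi=\zeta$ $\PP$-a.s.

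The core of the argument is to show that $\hat\xi=\hat P\xi$ solves the SDE defining $\hat\Sigma$ for the correct choice of $\hat\nu$. Applying $\hat P$ to the dynamics of $\Sigma$ produces drift $\hat P(A\xi+B\nu+D\omega)$, diffusion $\hat P E\xi$, and jump terms $\hat P R_i\xi$. The workhorse identity throughout is the decomposition $I_n=P\hat P+GF$ from \eqref{e:lin:con3b}. For the diffusion I would write $\hat P E=\hat P E(P\hat P+GF)=\hat P E P\hat P+\hat P E G F$; the second summand vanishes by \eqref{e:lin:con3d}, while $EP=P\hat E$ from \eqref{e:lin:con2g} together with $\hat P P=I_{\hat n}$ from \eqref{e:lin:con3c} yields $\hat P E P\hat P=\hat E\hat P$, so $\hat P E\xi=\hat E\hat\xi$. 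The identical computation, now invoking \eqref{e:lin:con2h} and \eqref{e:lin:con3e}, gives $\hat P R_i=\hat R_i\hat P$ and hence $\hat P R_i\xi=\hat R_i\hat\xi$ for every $i$. Thus the diffusion and jump channels of $\hat\Sigma$ are reproduced automatically.

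For the drift I would perform the same expansion on $\hat P A$ and on $\hat P D$. Using $AP=P\hat A-BQ$ from \eqref{e:lin:con2a} together with \eqref{e:lin:con3c} one finds $\hat P A=\hat A\hat P-\hat P B Q\hat P+\hat P A G F$, and using $D=P\hat D-BS$ from \eqref{e:lin:con2b} one finds $\hat P D=\hat D-\hat P B S$. Collecting terms, the projected drift equals $\hat A\hat\xi+\hat D\hat\omega+\hat P B(\nu-Q\hat P\xi-S\omega)+\hat P A G\,F\xi$. Recalling the block structure $\hat B=[\hat P B\;\;\hat P A G]$ from \eqref{e:lin:con:B}, this is precisely $\hat A\hat\xi+\hat B\hat\nu+\hat D\hat\omega$ provided one sets the interface
\begin{align*}
\hat\nu=\begin{bmatrix}\nu-Q\hat P\xi-S\omega\\ F\xi\end{bmatrix}.
\end{align*}
This is the step I expect to be the crux: it is where the engineered shape of $\hat B$ and the splitting $I_n=P\hat P+GF$ conspire so that the ``unreduced'' part $GF\xi$ of the state is fed back through the auxiliary input block $\hat P A G$, while the genuine control and disturbance corrections enter through $\hat P B$.

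Finally I would verify admissibility: $\hat\nu$ and $\hat\omega$ are affine, measurable functions of the progressively measurable process $\xi$ and of the admissible inputs $\nu,\omega$, hence themselves $\filtration$-progressively measurable and thus valid inputs of $\hat\Sigma$. Since $\hat\xi=\hat P\xi$ then satisfies the defining SDE of $\hat\Sigma$ $\PP$-a.s.\ under $(\hat\nu,\hat\omega)$ and produces output $\hat\zeta=\zeta$, the tuple $(\hat\xi,\hat\zeta,\hat\nu,\hat\omega)$ is a trajectory of $\hat\Sigma$ with $\zeta=\hat\zeta$ holding $\PP$-a.s., as required.
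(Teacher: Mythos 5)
Your proposal is correct and follows essentially the same route as the paper's proof: projecting via $\hat\xi=\hat P\xi$, expanding with $I_n=P\hat P+GF$, invoking \eqref{e:lin:con2} and \eqref{e:lin:con3} to match the drift, diffusion, and jump terms, and defining $\hat\nu=[\nu-Q\hat P\xi-S\omega;\,F\xi]$ to absorb the residual terms through the block structure of $\hat B$. The added remark on progressive measurability of $\hat\nu$ and $\hat\omega$ is a small but welcome completion not made explicit in the paper.
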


\begin{proof}
Let  $(\xi,\zeta,\nu,\omega)$ be a trajectory of~$\Sigma$. We are
going to show that $(\hat \xi,\hat \zeta,\hat \nu,\omega)$ with 
\begin{IEEEeqnarray*}{c;c;t;c}
\hat \zeta=\zeta,~&
\hat \xi= \hat P\xi,~
& and &
\hat \nu
=
\begin{bmatrix}
\nu-Q\hat P\xi-S\omega\\
F\xi
\end{bmatrix},
\end{IEEEeqnarray*}
$\PP$-a.s. is a trajectory of $\hat \Sigma$. We use \eqref{e:lin:con2g}, \eqref{e:lin:con2h}, \eqref{e:lin:con3b}, \eqref{e:lin:con3c}, \eqref{e:lin:con3d}, and \eqref{e:lin:con3e} and derive
\begin{align*}
\diff\hat P \xi&=(\hat PA\xi+\hat PB\nu+\hat PD\omega)\diff t+\hat PE\xi\diff W_t+\sum_{i=1}^{\widetilde q}\hat PR_i\xi\diff P_t^i\\
&=
(\hat PAP\hat P\xi+\hat PA(I_n-P\hat P)\xi+\hat PB\nu+\hat PD\omega)\diff t+\hat PE(P\hat P+GF)\xi\diff W_t+\sum_{i=1}^{\widetilde q}\hat PR_i(P\hat P+GF)\xi\diff P_t^i\\
&=
(\hat PAP\hat P\xi+\hat PAGF\xi+\hat PB\nu+\hat PD\omega)\diff t+\hat PP\hat E\hat P\xi\diff W_t+\sum_{i=1}^{\widetilde q}\hat PP\hat R_i\hat P\xi\diff P_t^i\\
&=
(\hat PAP\hat P\xi+\hat PAGF\xi+\hat PB\nu+\hat PD\omega)\diff t+\hat E\hat P \xi\diff W_t+\sum_{i=1}^{\widetilde q}\hat R_i\hat P \xi\diff P_t^i.
\end{align*}
Now we use the equations~\eqref{e:lin:con2a} and \eqref{e:lin:con2b} and the definition of $\hat B$ and $\hat \nu$ to derive
\begin{align*}
\diff\hat P \xi
=&
\big(\hat P(P \hat A-BQ)\hat P\xi+\hat PAGF\xi+\hat PB\nu+\hat P(P\hat
D-BS)\omega\big)\diff t+\hat E\hat P \xi\diff W_t+\sum_{i=1}^{\widetilde q}\hat R_i\hat P \xi\diff P_t^i\\
=&
(\hat A\hat P \xi+[\hat PB\; \hat PAG]\hat\nu+\hat D\omega)\diff t+\hat E\hat P \xi\diff W_t+\sum_{i=1}^{\widetilde q}\hat R_i\hat P \xi\diff P_t^i\\
=&
(\hat A\hat P \xi+\hat B\hat\nu+\hat D\omega)\diff t+\hat E\hat P \xi\diff W_t+\sum_{i=1}^{\widetilde q}\hat R_i\hat P \xi\diff P_t^i
\end{align*}
showing that $(\hat P \xi,\hat \zeta,\hat \nu,\omega)$ is a trajectory of
$\hat \Sigma$. From $C=\hat C\hat P$ in \eqref{e:lin:con3a}, it follows that $\hat \zeta=\zeta$ $\PP$-a.s.
which concludes the proof.
\end{proof}

\subsection{Construction of abstractions}
In this subsection, we provide constructive methods to compute the
abstraction $\hat \Sigma$ along with the various matrices involved in
the definition of the stochastic simulation function and its corresponding interface function. 

First, let us recall Lemma 2 in~\cite{GP09}, showing that there
exist matrices $\hat A$ and $Q$ satisfying~\eqref{e:lin:con2a} if and only if columns of $P$
span an $(A,B)$-controlled invariant subspace, see e.g. \cite[Definition~4.1.1]{BM92}.

\begin{lemma}\label{l:lin:con:AQ}
Consider matrices $A$, $B$, and $P$. There exist matrices $\hat A$ and
$Q$ satisfying~\eqref{e:lin:con2a} if and only if
\begin{IEEEeqnarray}{c}\label{e:lin:con:AQ}
\im AP\subseteq \im P+\im B.
\end{IEEEeqnarray}
\end{lemma}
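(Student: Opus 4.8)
The plan is to recognize \eqref{e:lin:con2a}, namely $AP = P\hat A - BQ$, as a linear matrix equation in the unknowns $\hat A$ and $Q$, and to reduce the claim to the elementary solvability criterion for such equations. First I would rewrite \eqref{e:lin:con2a} as
$$\begin{bmatrix} P & -B \end{bmatrix}\begin{bmatrix}\hat A\\ Q\end{bmatrix} = AP,$$
so that a pair $(\hat A, Q)$ satisfying~\eqref{e:lin:con2a} exists if and only if the matrix equation $MX = AP$ is solvable in $X$, where $M \Let \begin{bmatrix} P & -B\end{bmatrix}$.

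The key fact I would invoke is the standard column-by-column criterion: a matrix equation $MX = N$ admits a solution $X$ if and only if $\im N \subseteq \im M$, since the $j$th column of $MX$ is $M$ times the $j$th column of $X$, so each column of $N$ must lie in the column space of $M$ and, conversely, any such membership can be witnessed by choosing a preimage column of $X$. Applying this with $N = AP$ yields solvability precisely when $\im(AP) \subseteq \im\begin{bmatrix} P & -B\end{bmatrix}$, and since $\im(-B) = \im B$ one has $\im\begin{bmatrix} P & -B\end{bmatrix} = \im P + \im B$, which is exactly~\eqref{e:lin:con:AQ}.

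Spelling this out in both directions: for the ``only if'' part, if $\hat A$ and $Q$ satisfy $AP = P\hat A - BQ$, then every column $AP\mathsf{e}_j = P(\hat A\mathsf{e}_j) - B(Q\mathsf{e}_j)$ manifestly lies in $\im P + \im B$, giving $\im AP \subseteq \im P + \im B$. For the ``if'' part, under~\eqref{e:lin:con:AQ} each column $AP\mathsf{e}_j$ can be written as $P\hat a_j - Bq_j$ for some $\hat a_j\in\R^{\hat n}$ and $q_j\in\R^m$; assembling $\hat A \Let \begin{bmatrix}\hat a_1 & \cdots & \hat a_{\hat n}\end{bmatrix}$ and $Q \Let \begin{bmatrix} q_1 & \cdots & q_{\hat n}\end{bmatrix}$ recovers~\eqref{e:lin:con2a}.

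I do not expect any genuine obstacle here: the entire content is the per-column solvability of a linear equation over $\R$, and the statement is precisely the characterization that the columns of $P$ span an $(A,B)$-controlled invariant subspace, matching \cite[Lemma 2]{GP09} and \cite[Definition 4.1.1]{BM92}. The only points I would state with care are the identification $\im\begin{bmatrix} P & -B\end{bmatrix} = \im P + \im B$ and the equivalence between existence of the block $(\hat A, Q)$ and the column-wise membership, both of which are routine.
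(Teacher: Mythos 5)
Your proof is correct. The paper does not actually prove this lemma itself (it is recalled from \cite{GP09}), but your column-by-column solvability argument for the linear equation $\begin{bmatrix} P & -B\end{bmatrix}X = AP$ is exactly the argument the paper uses in the Appendix to prove the analogous Lemma \ref{l:lin:con:DS} for $\hat D$ and $S$, so you are taking essentially the same route.
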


Given that $P$ satisfies~\eqref{e:lin:con:AQ}, it is straightforward to
compute $\hat A$ and $Q$ such that~\eqref{e:lin:con2a} holds, by solving $\hat n$ linear equations.

Similar to  Lemma~\ref{l:lin:con:AQ}, we give necessary and sufficient conditions for the
existence of matrices $\hat D$ and $S$ appearing in
condition~\eqref{e:lin:con2b}.
\begin{lemma}\label{l:lin:con:DS}
Given $P$ and $B$, there exist matrices $\hat D$ and
$S$ satisfying~\eqref{e:lin:con2b} if and only if
\begin{IEEEeqnarray}{c}\label{e:lin:con:DS}
\im D\subseteq \im P+\im B.
\end{IEEEeqnarray}
\end{lemma}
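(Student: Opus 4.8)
The plan is to mirror the argument behind Lemma~\ref{l:lin:con:AQ} (equivalently Lemma~2 in~\cite{GP09}), with the fixed matrix $D$ now playing the role that $AP$ plays there. The key observation is that equation~\eqref{e:lin:con2b}, namely $D=P\hat D-BS$, is nothing more than the assertion that every column of $D$ lies in the subspace $\im P+\im B$; once this is recognised, the equivalence reduces to a routine column-by-column translation between a matrix factorisation and an image containment.

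For the necessity (``only if'') direction I would assume that matrices $\hat D$ and $S$ satisfying $D=P\hat D-BS$ are given. Then for any vector $v$ of appropriate dimension one has $Dv=P(\hat Dv)-B(Sv)\in\im P+\im B$, and since $v$ is arbitrary this yields $\im D\subseteq\im P+\im B$.

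For the sufficiency (``if'') direction I would assume $\im D\subseteq\im P+\im B$ and write $D=[d^1\;\cdots\;d^p]$ column-wise. Each column satisfies $d^j\in\im D\subseteq\im P+\im B$, so there exist vectors $a^j$ and $b^j$ with $d^j=Pa^j-Bb^j$ (the minus sign is immaterial and is chosen only to match the form of~\eqref{e:lin:con2b}). Collecting these into $\hat D=[a^1\;\cdots\;a^p]$ and $S=[b^1\;\cdots\;b^p]$ gives $D=P\hat D-BS$ exactly, as required.

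I do not anticipate a genuine obstacle here: the entire content is the elementary identification of the image of a product $PX$ with linear combinations of the columns of $P$, so the ``hard part'' is really just the correct setup. The only points demanding a little care are the bookkeeping of the sign on the $B$-term and the verification that the reconstructed $\hat D$ and $S$ have dimensions compatible with those already fixed by~\eqref{e:lin:con2}; both are immediate once the columns are assembled.
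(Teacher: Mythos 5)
Your proof is correct and follows essentially the same route as the paper's: both directions reduce to the column-by-column identification of $\im P+\im B$ with vectors of the form $P\hat d-Bs$. The only cosmetic difference is that you argue necessity directly while the paper states it contrapositively; the content is identical.
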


%The proof is recovered from the one of Lemma \ref{l:lin:con:AQ} by substituting $A$, $\hat A$, and $Q$ with $I_n$, $\hat D$, and $S$, respectively.
The proof of Lemma \ref{l:lin:con:DS} is provided in the Appendix.

Now we provide necessary and sufficient conditions for the
existence of matrices $\hat E$ and $\hat R_i$, $\forall i\in[1;\widetilde q]$, appearing in
conditions~\eqref{e:lin:con2g} and~\eqref{e:lin:con2h}.

\begin{lemma}\label{l:lin:con:E}
Given $P$ and $E$, there exists a matrix $\hat E$ satisfying~\eqref{e:lin:con2g} if and only if
\begin{IEEEeqnarray}{c}\label{e:lin:con:E}
\im EP\subseteq \im P.
\end{IEEEeqnarray}
\end{lemma}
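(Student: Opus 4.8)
The plan is to prove the two implications of the stated equivalence separately, following exactly the template of Lemma~\ref{l:lin:con:AQ} and Lemma~\ref{l:lin:con:DS}. The present statement is in fact the cleanest of the three, since no external input matrix $B$ enters equation~\eqref{e:lin:con2g}; consequently the right-hand side of the subspace inclusion is simply $\im P$ rather than $\im P+\im B$.

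For \emph{necessity}, I would assume that a matrix $\hat E$ satisfying~\eqref{e:lin:con2g} exists, i.e. $EP=P\hat E$. Then for every $v\in\R^{\hat n}$ one has $EPv=P(\hat E v)\in\im P$, so that $\im EP\subseteq\im P$, which is precisely~\eqref{e:lin:con:E}. This direction is immediate and requires no computation.

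For \emph{sufficiency}, I would assume~\eqref{e:lin:con:E} and construct $\hat E$ column by column. Writing $e_j$ for the $j$-th standard basis vector of $\R^{\hat n}$, the inclusion $\im EP\subseteq\im P$ guarantees that $EPe_j\in\im P$, so there exists $w_j\in\R^{\hat n}$ with $Pw_j=EPe_j$. Collecting these as $\hat E:=[w_1\cdots w_{\hat n}]$ then yields $P\hat E=EP$, i.e.~\eqref{e:lin:con2g}. Equivalently, and more compactly, one may take $\hat E:=P^{+}EP$, where $P^{+}$ denotes the Moore--Penrose pseudoinverse of $P$: since $PP^{+}$ is the orthogonal projector onto $\im P$ and $\im EP\subseteq\im P$, one obtains $P\hat E=PP^{+}EP=EP$.

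There is no genuine obstacle here, as the result is elementary linear algebra with an explicit construction. The only subtlety worth recording is that $\hat E$ need not be unique when $P$ lacks full column rank (the vectors $w_j$ are determined only up to $\ke P$), and the pseudoinverse formula merely selects one canonical solution. Finally, I would remark that the existence of each $\hat R_i$, $i\in[1;\widetilde q]$, in condition~\eqref{e:lin:con2h} follows verbatim by the same argument, since those equations have the identical form $R_iP=P\hat R_i$ and thus hold if and only if $\im R_iP\subseteq\im P$.
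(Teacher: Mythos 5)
Your proposal is correct and follows essentially the same route as the paper, which obtains this lemma by specializing the column-by-column construction of Lemma~\ref{l:lin:con:AQ} (see also the proof of Lemma~\ref{l:lin:con:DS}) to the case $B=0_{n\times m}$. The explicit pseudoinverse formula $\hat E=P^{+}EP$ and the remark on non-uniqueness are harmless additions but do not change the argument.
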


The proof is recovered from the one of Lemma \ref{l:lin:con:AQ} by substituting $A$, $\hat A$, and $B$ with $E$, $\hat E$, and $0_{n\times m}$, respectively.

\begin{lemma}\label{l:lin:con:R}
Given $P$ and $R_i$, $\forall i\in[1;\widetilde q]$, there exists matrices $\hat R_i$, $\forall i\in[1;\widetilde q]$, satisfying~\eqref{e:lin:con2h} if and only if
\begin{IEEEeqnarray}{c}\label{e:lin:con:R}
\im R_iP\subseteq \im P,
\end{IEEEeqnarray}
for any $i\in[1;\widetilde q]$.
\end{lemma}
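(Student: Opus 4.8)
The plan is to reduce the claim to the single-matrix version already handled in Lemma~\ref{l:lin:con:E}, since the conditions \eqref{e:lin:con2h} decouple across the index $i$. Indeed, for each fixed $i\in[1;\widetilde q]$, condition~\eqref{e:lin:con2h} reads $R_iP = P\hat R_i$, which has exactly the form of \eqref{e:lin:con2g}, namely $EP=P\hat E$, after the substitution $E\mapsto R_i$ and $\hat E\mapsto \hat R_i$. Therefore the whole lemma follows by applying Lemma~\ref{l:lin:con:E} once for each $i$.

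Concretely, the first step is to observe that the only content of \eqref{e:lin:con2h} for a given $i$ is the solvability of the linear matrix equation $P\hat R_i = R_iP$ in the unknown $\hat R_i$. The underlying linear-algebraic fact is the standard column-space solvability criterion: an equation $PX = M$ admits a solution $X$ if and only if every column of $M$ lies in the range of $P$, i.e.\ $\im M\subseteq \im P$. Taking $M = R_iP$ yields that $\hat R_i$ exists if and only if $\im R_iP\subseteq \im P$.

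The second step is simply to range over $i$: the existence of the full family $\{\hat R_i\}_{i\in[1;\widetilde q]}$ is equivalent to the simultaneous solvability of the individual equations, which in turn is equivalent to $\im R_iP\subseteq \im P$ holding for every $i\in[1;\widetilde q]$. This is precisely the stated condition~\eqref{e:lin:con:R}, so both implications are obtained at once.

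I do not anticipate any genuine obstacle here; the statement is a direct corollary of Lemma~\ref{l:lin:con:E} (itself recovered from Lemma~\ref{l:lin:con:AQ}). The only point worth flagging is the decoupling observation—that the reset matrices enter \eqref{e:lin:con2h} independently for each event $i$, so no joint/simultaneous subspace condition beyond the per-index range inclusions is needed. Given this, the cleanest write-up is to state that the proof is recovered from that of Lemma~\ref{l:lin:con:E} by substituting $E$ and $\hat E$ with $R_i$ and $\hat R_i$ and invoking it for all $i\in[1;\widetilde q]$.
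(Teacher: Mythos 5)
Your proposal is correct and matches the paper's approach: the paper likewise disposes of this lemma by reducing it, index by index, to the single-equation case (it cites Lemma~\ref{l:lin:con:AQ} with $B=0_{n\times m}$, which is exactly Lemma~\ref{l:lin:con:E}). Your explicit remarks on the column-space solvability criterion and the decoupling across $i$ are accurate but add nothing beyond what the paper's one-line reduction already contains.
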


The proof is recovered from the one of Lemma \ref{l:lin:con:AQ} by substituting $A$, $\hat A$, and $B$ with $R_i$, $\hat R_i$, $\forall i\in[1;\widetilde q]$, and $0_{n\times m}$, respectively.

Lemmas~\ref{l:lin:con:AQ},~\ref{l:lin:con:DS},~\ref{l:lin:con:E}, and~\ref{l:lin:con:R} provide
necessary and sufficient conditions on $P$ which lead to the
construction of matrices $\hat A$, $\hat D$, $\hat E$, and $\hat R_i$, $\forall i\in[1;\widetilde q]$, together with the matrices
$Q$, $S$ appearing in the definition of the interface function in \eqref{e:lin:int}. The
output matrix $\hat C$ simply
follows by $\hat C=CP$. As we already discussed, the abstract external input matrix can be chosen arbitrarily. For example one can choose $\hat B=I_{\hat n}$ making the abstract system $\hat \Sigma$ fully actuated and, hence, the synthesis problem over it much simpler. One can also choose $\hat B$ as in~\eqref{e:lin:con:B} guaranteeing preservation of all behaviors of $\Sigma$ on $\hat\Sigma$ under extra conditions in \eqref{e:lin:con3}. Lemma 3 in \cite{GP09}, as recalled next, provides necessary and
sufficient conditions on $P$ and $C$ for the existence of $\hat P$,
$G$, and $F$ satisfying \eqref{e:lin:con3a}, \eqref{e:lin:con3b}, and \eqref{e:lin:con3c}.

\begin{lemma}
Consider matrices $C$ and $P$ with $P$ being injective and let $\hat C=CP$. There exists
matrix $\hat P$ satisfying~\eqref{e:lin:con3a}, \eqref{e:lin:con3b}, and \eqref{e:lin:con3c}, for some matrices $G$ and $F$ of appropriate dimensions, if and only
if
\begin{IEEEeqnarray}{c}\label{e:lin:con:BB}
\im P+\ke C=\R^n.
\end{IEEEeqnarray}
\end{lemma}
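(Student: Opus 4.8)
The plan is to translate the three matrix identities \eqref{e:lin:con3a}--\eqref{e:lin:con3c} into a single statement about subspaces of $\R^n$ and then settle that statement by a direct-sum argument. First I would analyze \eqref{e:lin:con3c}: the identity $\hat P P=I_{\hat n}$ says precisely that $\hat P$ is a left inverse of the injective matrix $P$. For any such $\hat P$ the matrix $\Pi\Let P\hat P$ is idempotent, since $\Pi^2=P(\hat P P)\hat P=P\hat P=\Pi$; moreover $\im\Pi=\im P$ and, by injectivity of $P$, $\ke\Pi=\ke\hat P$. Hence every left inverse induces a splitting $\R^n=\im P\oplus\ke\hat P$, and conversely every complement $\mathcal S$ of $\im P$ is realized as $\ke\hat P$ for the unique left inverse that sends $Pw\mapsto w$ and kills $\mathcal S$.

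Next I would dispose of \eqref{e:lin:con3b} and reinterpret \eqref{e:lin:con3a}. Given a left inverse $\hat P$, equation \eqref{e:lin:con3b} only requires $GF=I_n-P\hat P$, and since any square matrix factors trivially (for instance $G=I_n-P\hat P$, $F=I_n$), suitable $G,F$ exist for every admissible $\hat P$; thus \eqref{e:lin:con3b} imposes no constraint. Using $\hat C=CP$, equation \eqref{e:lin:con3a} becomes $C(I_n-P\hat P)=0$; since $I_n-P\hat P$ projects onto $\ke\hat P$ along $\im P$, this is equivalent to $\ke\hat P\subseteq\ke C$. Collecting these observations, the three conditions are simultaneously solvable if and only if there is a subspace $\mathcal S$ with $\R^n=\im P\oplus\mathcal S$ and $\mathcal S\subseteq\ke C$.

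It then remains to show that such an $\mathcal S$ exists if and only if $\im P+\ke C=\R^n$. The forward implication is immediate, as $\R^n=\im P\oplus\mathcal S\subseteq\im P+\ke C$. For the converse I would construct $\mathcal S$ explicitly as a complement of $\im P\cap\ke C$ inside $\ke C$, so that $\ke C=(\im P\cap\ke C)\oplus\mathcal S$. Then $\im P+\mathcal S=\im P+\ke C=\R^n$ because $\im P\cap\ke C\subseteq\im P$, while $\im P\cap\mathcal S\subseteq(\im P\cap\ke C)\cap\mathcal S=\{0\}$ by the choice of $\mathcal S$; hence $\R^n=\im P\oplus\mathcal S$ with $\mathcal S\subseteq\ke C$, as required.

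I expect the only genuinely delicate point to be this converse construction: one must pick the complement \emph{inside} $\ke C$ (rather than an arbitrary complement of $\im P$) so that $\mathcal S\subseteq\ke C$ and the direct-sum property hold simultaneously, and the hypothesis $\im P+\ke C=\R^n$ is exactly what forces $\im P+\mathcal S$ to fill all of $\R^n$. The remaining verifications (idempotency of $P\hat P$, the kernel and image identifications, and the trivial factorization producing $G$ and $F$) are routine linear algebra.
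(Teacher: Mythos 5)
Your argument is correct and complete. Note that the paper itself does not prove this statement---it is recalled verbatim as Lemma~3 of \cite{GP09}---so there is no in-paper proof to compare against; your reduction of \eqref{e:lin:con3a}--\eqref{e:lin:con3c} to the existence of a complement of $\im P$ contained in $\ke C$ (via the idempotent $P\hat P$, the observation that \eqref{e:lin:con3b} is vacuous for the trivial factorization $G=I_n-P\hat P$, $F=I_n$, and the choice of a complement of $\im P\cap\ke C$ inside $\ke C$) is exactly the standard route and all steps check out.
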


The conditions~\eqref{e:lin:con:AQ}-\eqref{e:lin:con:R} (resp. \eqref{e:lin:con:AQ}-\eqref{e:lin:con:BB}) complete the characterization of matrix $P$, together with
the system matrices $\{A,B,C,D\}$ leading to the abstract matrices
$\{\hat A,\hat B,\hat C,\hat D\}$, where $\hat B$ can be chosen arbitrarily (resp. $\hat B$ is computed as in \eqref{e:lin:con:B} for the sake of preservation of all behaviors of $\Sigma$ on $\hat\Sigma$ as long as conditions \eqref{e:lin:con3d} and \eqref{e:lin:con3e} are also satisfied).
Note that there always exists an injective matrix $P\in\R^{n\times \hat n}$ that
satisfies the conditions~\eqref{e:lin:con:AQ}-\eqref{e:lin:con:BB}. In the worst-case scenario, we can pick the identity matrix with $\hat n=n$. Of course, we
would like to have the abstraction $\hat\Sigma$ as simple as possible
and, therefore, we should aim at a $P$ with $\hat n$ as small as possible.

We summarize the construction of the abstraction $\hat \Sigma$ in Table~\ref{tb:1}.

\begin{table}[h]
\centering
\begin{tabular}{|p{0.0\columnwidth}l|}
\hline
1.&Compute matrices $M$ and $K$ satisfying~\eqref{e:lin:con1} and~\eqref{e:lin:con11}.\\
2.&Pick an injective $P$ satisfying \eqref{e:lin:con:AQ}-\eqref{e:lin:con:R} (resp. \eqref{e:lin:con:AQ}-\eqref{e:lin:con:BB} only\\& if the computed matrices $\hat P$, $G$, and $F$ satisfy \eqref{e:lin:con3d} and \eqref{e:lin:con3e});\\
3.&Compute $\hat A$ and $Q$ from~\eqref{e:lin:con2a};\\
4.&Compute $\hat D$ and $S$ from~\eqref{e:lin:con2b};\\
5.&Compute $\hat C=CP$;\\
6.&Choose $\hat B$ arbitrarily (resp. $\hat B=[\hat PB\;\; \hat PAG]$);\\
7.&Compute $\widetilde R$, appearing in \eqref{e:lin:int}, from \eqref{e:lin:con:tildeR};\\
8.&Compute $\hat E$ from~\eqref{e:lin:con2g} (resp. $\hat E=\hat PEP$);\\
9.&For any $i\in[1;\widetilde q]$, compute $\hat R_i$ from~\eqref{e:lin:con2h} (resp. $\hat R_i=\hat PR_iP$).\\\hline
\end{tabular}
\caption{Construction of an abstract JLSS $\hat\Sigma$ for a given JLSS $\Sigma$.}\label{tb:1}
\end{table}

%\begin{remark}\label{r:lin:diffusion}
%The abstract diffusion term $\hat E$ and reset terms $\hat R_i$, $\forall i\in[1;\widetilde q]$, in \eqref{e:lin:con2g} should be chosen such that they facilitate the satisfaction of the inequality in \eqref{e:lin:con2g}, e.g. $\Vert \hat E\Vert\leq\Vert E\Vert$ and $\Vert \hat R_i\Vert\leq\Vert R_i\Vert$, $\forall i\in[1;\widetilde q]$. As an example, one can choose $\hat E=\hat R_i=0_{\hat n\times\hat n}$, $\forall i\in[1;\widetilde q]$, which makes $\hat\Sigma$ a deterministic control system and, hence, the control synthesis problem over $\hat\Sigma$ much easier. \MZ{Matthias \& Peyman: double check my claim in this remak!}
%\end{remark}

\section{An Example}\label{case_study}

Let us demonstrate the effectiveness of the proposed results by
synthesizing a controller for an interconnected system
  consisting of
four JLSS $\Sigma=\mathcal{I}(\Sigma_1,\Sigma_2,\Sigma_3,\Sigma_4)$. The interconnection scheme of $\Sigma$ is illustrated in
Figure~\ref{f:ex1}. 
\begin{figure}[t]
\centering

  \begin{tikzpicture}[auto, node distance=2cm, >=latex]

  \tikzset{%
      block/.style    = {thick,draw, rectangle, minimum height = 3em, minimum width = 3em}
      }

    % draw systems
    \node[block] (sys3) at (3,0) {$\Sigma_3$};
    \node[block] (sys4) at (3,-1.5) {$\Sigma_4$};
    \node[block] (sys1) at (0,0) {$\Sigma_1$};
    \node[block] (sys2) at (0,-1.5) {$\Sigma_2$};

    % external singals sys3
    %\draw[<-] ($(sys3.west)+(0,0.25)$) -- node[above] {$u_{1}$} ++(-1.5,0);
    \draw[->] ($(sys3.east)+(0,0.25)$) -- node[near end,below] {$y_{33}$} ++(1.5,0);

    % external singals sys4
    %\draw[<-] ($(sys4.west)+(0,-.25)$) -- node[below] {$u_{2}$} ++(-1.5,0);
    \draw[->] ($(sys4.east)+(0,-.25)$) -- node[near end, above] {$y_{44}$} ++(1.5,0);

    % external singals sys1
    \draw[<-] ($(sys1.west)+(0,-.25)$) -- node[above, near end] {$u_{1}$} ++(-1.5,0);

    % external singals sys2
    \draw[<-] ($(sys2.west)+(0,0.25)$) -- node[below, near end] {$u_{2}$} ++(-1.5,0);

    % internal S1 -> S3
    \draw[->] ($(sys3.east)+(.5,.25)$) |- node[near start, right] {$y_{31}$}  ($(sys1.west)+(-.5,.75)$) |- ($(sys1.west)+(0,.25)$) ;
    \draw[fill] ($(sys3.east)+(.5,.25)$) circle (1pt);

    % internal S2 -> S4
    \draw[->] ($(sys4.east)+(.5,-.25)$) |-node[near start, right] {$y_{42}$} ($(sys2.west)+(-.5,-.75)$) |- ($(sys2.west)+(0,-.25)$);
    \draw[fill] ($(sys4.east)+(.5,-.25)$) circle (1pt);

    % internal S3 -> S2
    \draw[->] ($(sys1.east)$)              -- node[above] {$y_{14}$} 
              ($(sys1.east)+(0.5,0.00)$)  --
              ($(sys4.west)+(-.5,0)$) -- (sys4.west);

    % internal S4 -> S1
    \draw[->] ($(sys2.east)$)              -- node[below] {$y_{23}$} 
              ($(sys2.east)+(0.5,0.00)$)  --
              ($(sys3.west)+(-.5,0)$) -- (sys3.west);

  \end{tikzpicture}
  \caption{The interconnected system
  $\Sigma=\mathcal{I}(\Sigma_1,\Sigma_2,\Sigma_3,\Sigma_4)$.}\label{f:ex1}
\end{figure}
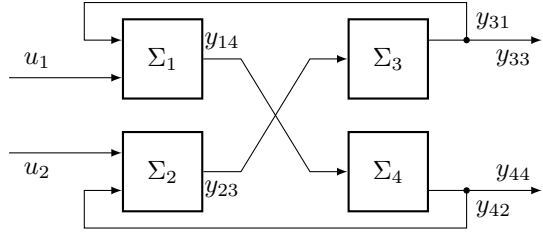%
The system has two outputs and we synthesize a
controller to enforce them to stay approximately (in the $2$nd
moment metric) within the safety constraint
\begin{IEEEeqnarray*}{c}
  S=\intcc{0~5}\times \intcc{0~5}.
\end{IEEEeqnarray*}
We refer the interested readers to the explanation provided before \cite[Remark 5.5]{majid8} or to \cite[Subsection 5.1]{majid10} concerning the interpretation of the satisfaction of a safety constraint in the moment over the concrete stochastic systems.

In designing a controller for $\Sigma$ we proceed as follows. In the first step, we compute abstractions
$\hat \Sigma_i$ of the individual subsystems to obtain an abstraction
$\hat \Sigma=\mathcal{I}(\hat \Sigma_1,\hat \Sigma_2,\hat
\Sigma_3,\hat \Sigma_4)$ of the interconnected system $\Sigma$. The
interconnection scheme changes for $\hat \Sigma$ (see
Remark~\ref{r:cutConnection}) and the abstract system is given by two 
identical independent interconnected systems $\hat\Sigma_{14}=\mathcal{I}(\hat \Sigma_1,\hat
\Sigma_4)$ and $\hat \Sigma_{23}=\mathcal{I}(\hat
\Sigma_2,\hat \Sigma_3)$.
The abstract
system $\hat \Sigma$ is illustrated in Figure~\ref{f:ex2}. 
\begin{figure}[t]
\centering

  \begin{tikzpicture}[auto, node distance=2cm, >=latex]

  \tikzset{%
      block/.style    = {thick,draw, rectangle, minimum height = 3em, minimum width = 3em}
      }

    % draw systems
    \node[block] (sys3) at (3,0) {$\hat\Sigma_3$};
    \node[block] (sys4) at (3,-1.5) {$\hat\Sigma_4$};
    \node[block] (sys1) at (0,0) {$\hat\Sigma_1$};
    \node[block] (sys2) at (0,-1.5) {$\hat\Sigma_2$};

    % external singals sys3
    %\draw[<-] ($(sys3.west)+(0,0.25)$) -- node[above] {$u_{1}$} ++(-1.5,0);
    \draw[->] ($(sys3.east)+(0,0)$) -- node[near end,below] {$\hat y_{33}$} ++(1.5,0);

    % external singals sys4
    %\draw[<-] ($(sys4.west)+(0,-.25)$) -- node[below] {$u_{2}$} ++(-1.5,0);
    \draw[->] ($(sys4.east)+(0,0)$) -- node[near end, above] {$\hat y_{44}$} ++(1.5,0);

    % external singals sys1
    \draw[<-] ($(sys1.west)+(0,0)$) -- node[above, near end] {$\hat u_{1}$} ++(-1.5,0);

    % external singals sys2
    \draw[<-] ($(sys2.west)+(0,0)$) -- node[below, near end] {$\hat u_{2}$} ++(-1.5,0);

    % internal S3 -> S2
    \draw[->] ($(sys1.east)$)              -- node[above] {$\hat y_{14}$} 
              ($(sys1.east)+(0.5,0.00)$)  --
              ($(sys4.west)+(-.5,0)$) -- (sys4.west);

    % internal S4 -> S1
    \draw[->] ($(sys2.east)$)              -- node[below] {$\hat y_{23}$} 
              ($(sys2.east)+(0.5,0.00)$)  --
              ($(sys3.west)+(-.5,0)$) -- (sys3.west);

  \end{tikzpicture}
  \caption{The abstract interconnected system
  $\hat\Sigma=\mathcal{I}(\hat\Sigma_1,\hat\Sigma_2,\hat\Sigma_3,\hat\Sigma_4)$.}\label{f:ex2}
\end{figure}
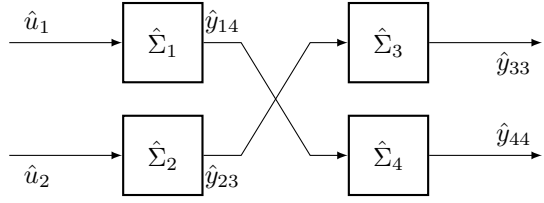
In the
second step, we determinize the stochastic systems
$\hat\Sigma_{14}$ and $\hat
\Sigma_{23}$ by neglecting the diffusion and reset terms. We obtain
two identical deterministic control systems $\tilde\Sigma_{14}$ and $\tilde
\Sigma_{23}$. We show that $\tilde
\Sigma_{i}$ is an abstraction of $\hat \Sigma_i$, $i\in\{14,23\}$ by computing an SSF-M$_2$
function from $\tilde \Sigma_{i}$ to $\hat \Sigma_i$.
In the third step, we fix a sampling time \mbox{$\tau>0$}
and use the MATLAB Toolbox MPT~\cite{MPT3} to
synthesize a safety controller that enforces the safety constraints on
$\tilde \Sigma=\mathcal{I}(\tilde \Sigma_{14},\tilde\Sigma_{23})$ at all sampling times $k\tau$, $k\in\N$.
In the final step, we refine the computed controller for $\tilde \Sigma$ to a
controller for $\Sigma$. We use Theorem~\ref{theorem1} to establish a bound on the distance in the $2$nd moment metric between
the output trajectories of $\Sigma$ and~the safe set $S$.

\subsection{The interconnected system}
Let us consider the system
illustrated in
Figure~\ref{f:ex1}. The subsystems $\Sigma_1$ and $\Sigma_2$ are
double integrators and $\Sigma_3$ and $\Sigma_4$ are autonomous triple
integrators. All systems are affected by a scalar Brownian motion and a Poisson process. For $j\in\{1,2\}$ the system matrices are given by
\begin{IEEEeqnarray*}{l}
A_j
=
\begin{bmatrix}
 0 &  1\\
 2 &  0 
\end{bmatrix},~
B_j=
\begin{bmatrix}
 0\\
 1
\end{bmatrix},~
C^T_{j}=
\begin{bmatrix}
 1\\
 0
\end{bmatrix},~
E_j=0.4I_2,~R_j=0.1I_2,
\end{IEEEeqnarray*}
and for $i\in\{3,4\}$ by 
\begin{IEEEeqnarray*}{l}
A_i
=
\begin{bmatrix}
 \phantom{-}0 & \phantom{-}  1 &\phantom{-} 0\\
 \phantom{-}0 &\phantom{-}  0 &\phantom{-} 1\\
 -24 & -26 & -9
\end{bmatrix},~
B_i=0,~
C^T_{i}=
\begin{bmatrix}
 1\\
 0\\
 0
\end{bmatrix},~
E_i=0.4I_3,~R_i=0.1I_3.
\end{IEEEeqnarray*}%
The rate of the Poisson
process $P_t$ is $\lambda=4.2$. The output of $\Sigma_1$ (resp. $\Sigma_2$) is connected to the internal input of
$\Sigma_{4}$ (resp. $\Sigma_3$) and the output of $\Sigma_{3}$ (resp. $\Sigma_4$) connects to the internal input of
$\Sigma_{1}$ (resp. $\Sigma_2$). The output functions
$h_{ij}(x_i)=C_{ij}x_i$ are determined by
\begin{IEEEeqnarraybox}{l}
C_{ii}=C_{i(i-2)}=
\begin{bmatrix} 1& 0& 0 \end{bmatrix}
\end{IEEEeqnarraybox} for $i\in\{3,4\}$,  
$C_{23}=C_{14}=
\begin{bmatrix} 1& 0 \end{bmatrix}$
and  $h_{ij}\equiv0$ for the remaining $i,j\in\intcc{1;4}$. Correspondingly, the internal input matrices
are given by 
\begin{IEEEeqnarray*}{c}
D_{41}= D_{32}=
\begin{bmatrix}
 0\\
 -d\\
 5d
\end{bmatrix}
,\;
D_{j(j+2)}=
\begin{bmatrix}
 0\\
 d
\end{bmatrix},\;d\neq 0,\; j\in\{1,2\}.
\end{IEEEeqnarray*}
Subsequently, we use $C_1=C_{14}$, $C_2=C_{23}$, \mbox{$C_i=C_{ii}$}, \mbox{$i\in\{3,4\}$}, 
$D_1=D_{13}$, $D_2=D_{24}$, $D_3=D_{32}$, $D_4=D_{41}$, and denote the JLSS by \mbox{$\Sigma_i=(A_i,B_i,C_i,D_i,E_i,R_i)$}.

\subsection{The abstract subsystems}
In order to construct an abstraction for
$\mathcal{I}(\Sigma_1,\Sigma_2,\Sigma_3,\Sigma_4)$ we construct an
 abstraction $\hat \Sigma_i$ of each individual
subsystem $\Sigma_i$, $i\in\{1,2,3,4\}$. We begin with  $i\in\{1,2\}$ and follow the steps outlined
in Table~\ref{tb:1}. First, we fix $\widehat\kappa=3$ and solve an appropriate
LMI (see Lemma~\ref{l:lmi}) to determine the matrices $M_i$ and $K_i$ so
that~\eqref{e:lin:con1} and ~\eqref{e:lin:con11} hold. We obtain
\begin{IEEEeqnarray*}{c}
M_{i}
=
\begin{bmatrix}
1.68&    0.4\\
0.4&    0.23
\end{bmatrix}
,\;
K_i^T=
\begin{bmatrix}
 -9\\
 -4
\end{bmatrix}.
\end{IEEEeqnarray*}
We continue with step 2. and determine 
\begin{IEEEeqnarray*}{c}
P_i^T=\begin{bmatrix} 1& -2\end{bmatrix},
\end{IEEEeqnarray*}
so that \eqref{e:lin:con:AQ}-\eqref{e:lin:con:BB} hold. The matrices $\hat P_i$,
$F_i$, and $G_i$ that~\eqref{e:lin:con3b}-\eqref{e:lin:con3e} hold, follow by
$\hat P_i=\begin{bmatrix} 1& 0\end{bmatrix}$,
$G_i^T=\begin{bmatrix}0 & 2\end{bmatrix}$, and $F_i=\begin{bmatrix}1 &
0\end{bmatrix}$.
We continue with steps 3.-8.~and get the scalar abstract JLSS subsystems 
$\hat \Sigma_i$, $i\in\{1,2\}$ with
\begin{IEEEeqnarray*}{c}
\hat A_i=-2,\; \hat B_i=1,\; \hat D_i=0,\;  \hat C_i=1,\;  \hat
E_i=0.4,\;  \hat R_i=0.1.
\end{IEEEeqnarray*}
Simultaneously, we compute $Q_i=2$
and $S_i=-d$. As already discussed in Remark~\ref{r:cutConnection},
$D_i\in \im B_i$ and we can choose $\hat D_i=0$. It follows that the subsystems $\hat \Sigma_i$,
$i\in\{1,2\}$, are not affected by internal inputs, which implies that
the interconnection between $\Sigma_3$ (resp. $\Sigma_4$) and $\Sigma_1$ (resp. $\Sigma_2$) is
absent on the abstract interconnected system $\hat \Sigma$; compare also
Figure~\ref{f:ex1} and Figure~\ref{f:ex2}.

We continue with the construction of $\hat \Sigma_i$ for $i\in\{3,4\}$. 
We repeat the procedure and obtain
\begin{IEEEeqnarray*}{c'c}
M_{i}
=
\begin{bmatrix}
  6.924&    3.871&    0.468\\
  3.871&    2.534&    0.315\\
  0.468&    0.315&    0.054
\end{bmatrix},&K_i=0.
\end{IEEEeqnarray*}
In step 2., we compute 
\begin{IEEEeqnarray*}{c't'c}
P_i^T=\begin{bmatrix} 1& -2 & 4 \\ 1 & -3 & 9\end{bmatrix},
\end{IEEEeqnarray*}
so that \eqref{e:lin:con:AQ}-\eqref{e:lin:con:BB} hold. The equations~\eqref{e:lin:con3b}-\eqref{e:lin:con3e} are satisfied by
\begin{IEEEeqnarray*}{c't'c}
\hat P_i=\tfrac{1}{6}\begin{bmatrix} 0& -9 & -3 \\ 0 & \phantom{-}4 & \phantom{-}2\end{bmatrix},
\end{IEEEeqnarray*}
$G_i^T=\begin{bmatrix}1 & 0&0\end{bmatrix}$, and
$F_i=\tfrac{1}{6}\begin{bmatrix}6 & 5&1\end{bmatrix}$.
We follow steps 3.-8.~and get the 2D abstract JLSS subsystems
$\hat \Sigma_i$, $i\in\{3,4\}$, where
\begin{IEEEeqnarray*}{c}
\hat A_i=\begin{bmatrix}-2& 0\\ 0 & -3\end{bmatrix},
\hat B_i=\begin{bmatrix}12\\ -8\end{bmatrix}, 
\hat D_i=d\begin{bmatrix}-1 \\ \phantom{-}1 \end{bmatrix},
\hat C_i=\begin{bmatrix}1 & 1 \end{bmatrix},
\end{IEEEeqnarray*}
with the diffusion and reset terms again given by $\hat E_i=0.4I_2$
and $\hat R_i=0.1I_2$.
Moreover, $Q_i=0$ and $S_i=0$. 

For all $i\in \{1,2,3,4\}$, equations~\eqref{e:lin:con1},~\eqref{e:lin:con11}, and \eqref{e:lin:con2} hold. Hence, Theorem~\ref{t:lin:suf} applies and we see that~$V_i(x_i,\hat x_i)=(x_i-P_i\hat x_i)^T M_i(x_i-P_i\hat x_i)$ is
an SSF-M$_2$ function from~$\hat\Sigma_i$ to~$\Sigma_i$ for all
$i\in\intcc{1;4}$. Moreover, \eqref{e:lin:con3} holds and Theorem~\ref{t:B} implies that all the behaviors of $\Sigma_i$ are preserved on $\hat \Sigma_i$. Following the proof of Theorem~\ref{t:lin:suf}, we see that
the interface function for $i\in\{1,2\}$ follows by~\eqref{e:lin:int} as
\begin{align}\label{e:lin:ex:interf}
\small
\nu_{i\hat\nu_i}(x_i,\hat x_i,\hat u_i,\hat w_i)
=
K_i(x_i-P_i\hat x_i)
-2\hat x_i
-2.5\hat u_i
-d\hat w_i,
\end{align}
and $\nu_{i\hat \nu_i}\equiv 0$ for $i\in\{3,4\}$. Here we
used~\eqref{e:lin:con:tildeR} to compute $\widetilde R_i=-2.5$ for $i\in\{1,2\}$.
Although the internal input matrices for $\Sigma_1$ and $\Sigma_2$ are zero, the
internal inputs $\hat w_1= \hat y_3$ and $\hat w_2=\hat y_4$ still appear in the interface function.
As provided in the proof of Theorem \ref{t:lin:suf} and by fixing $\pi=1$, the $\mathcal{K}_\infty$ functions for
$i\in\{1,2\}$ and $j\in\{3,4\}$ are given by
\begin{IEEEeqnarray*}{l;l;l;l}
\alpha_i(s)=s,&\eta_i(s)=2s,& \rho_{i\mathrm{ext}}(s)=0.16s, &
\rho_{i\mathrm{int}}(s)=1.3d^2s, \\
\alpha_j(s)=s,&\eta_j(s)=2s,& \rho_{j\mathrm{ext}}(s)=150s, &
\rho_{j\mathrm{int}}(s)=7.9d^2s,
\end{IEEEeqnarray*}
for any $s\in\R_{\ge0}$.

\subsection{The interconnected abstraction}

We now proceed with Theorem~\ref{t:ic} to construct a stochastic simulation function form
$\hat \Sigma$ to $\Sigma$. We start by 
checking the Assumption~\ref{assumption1}. Note that $\rho_{i\mathrm{int}}$
satisfies the triangle inequality and we use Remark~\ref{r:triangle} to see that Assumption~\ref{assumption1}
holds for $\gamma_i(s)=s$, $\widetilde\lambda_i=2$, and $\delta_{ij}$ are given by
\begin{IEEEeqnarray*}{c'c}
\Delta=d^2 \begin{bmatrix}
0 & 0 & 1.3 & 0\\
0 & 0 & 0 & 1.3\\
0 & 7.9 & 0 & 0\\
7.9 & 0 & 0 & 0
\end{bmatrix}.
\end{IEEEeqnarray*}
Additionally, we require the existence of a vector $\mu\in\R_{>0}^4$ satisfying~\eqref{e:SGcondition}, which is the case if and only if the
spectral radius of $\Delta$ is strictly less than one, i.e.,
$1/2\sqrt{1.3\times 7.9}d^2<1$, which holds for example for $d=1/2$. One can choose the vector $\mu$
as $\mu=[2\;2\;1\;1]$ and, hence, it follows that 
\begin{IEEEeqnarray*}{c}
V(x,\hat x)=\sum_{i=1}^2 2V_i(x_i,\hat x_i) +\sum_{i=3}^4 V_i(x_i,\hat x_i),
\end{IEEEeqnarray*}
is an SSF-M$_2$ from $\mathcal{I}(\hat \Sigma_1,
\hat \Sigma_2,\hat \Sigma_3,\hat \Sigma_4)$ to
$\mathcal{I}(\Sigma_1,\Sigma_2,\Sigma_3,\Sigma_4)$ where the interface function
follows from~\eqref{e:lin:ex:interf}. Following the proof of
Theorem \ref{t:ic}, we see that $V$ satisfies~\eqref{e:sf:1}
with $\alpha(s)=s$ and~\eqref{inequality1} with $\eta(s)=1.35s$,
$\rho_{\mathrm{ext}}(s)=150s$, and $\rho_{\mathrm{int}}\equiv0$. Here, we
computed $\eta$ and $\rho_{\mathrm{ext}}$ according to~\eqref{lambda}
and~\eqref{rho}. 
%From
%Theorem~\ref{theorem1} we obtain
%\begin{align*}
%\EE[\Vert\zeta_{a\nu\omega}(t)-\hat \zeta_{\hat a\hat \nu\hat\omega}(t)\Vert]
%\le
%\mathsf{e}^{-t}\EE[V(a,\hat a)]
%+33.5\EE[\Vert\hat \nu\Vert_\infty].
%\end{align*}
Subsequently, we design a controller for $\Sigma$ via the abstraction $\hat
\Sigma$. We restrict external inputs for $\hat \Sigma_3$ and $\hat \Sigma_4$ to
zero, so that we can set $\rho_{j\mathrm{ext}}\equiv 0$, $j\in\{3,4\}$. As a
result $\rho_{\mathrm{ext}}$ reduces to $\rho_{\mathrm{ext}}(s)=0.16s$, $\forall s\in\R_{\geq0}$, and we
use Theorem~\ref{theorem1} in combination with Remark~\ref{bound} to derive the inequality 
\begin{align}\label{e:ex:bound}
\EE[\Vert\zeta_{a\nu}(t)-\hat \zeta_{\hat a\hat \nu}(t)\Vert^2]
\le
\mathsf{e}^{-1.35t}\EE[V(a,\hat a)]
+0.12\EE[\Vert\hat \nu\Vert_\infty^2].
\end{align}

\subsection{The deterministic system and the controller}
The synthesis of the safety controller is based on a deterministic system $\tilde\Sigma$
which results from $\hat\Sigma$ by omitting the diffusion and reset terms. In
particular, we determinize the identical systems $\hat
\Sigma_{14}=\mathcal{I}(\hat\Sigma_1,\hat\Sigma_4)$ and  $\hat
\Sigma_{23}=\mathcal{I}(\hat\Sigma_2,\hat\Sigma_3)$  and obtain for $i\in\{14,23\}$ the
systems
\begin{IEEEeqnarray*}{c}
\tilde\Sigma_{i}: \left\{
  \begin{IEEEeqnarraybox}[][c]{rCl}
  \dot{\tilde \xi}_i(t)&=&
    \begin{bmatrix}
    -2 & \phantom{-}0 & -d \\
     0 & -3 & \phantom{-}d \\
     0 & \phantom{-}0 & -2 
    \end{bmatrix}
  \tilde \xi_i(t)+
    \begin{bmatrix}
     0 \\
     0 \\
     1 
    \end{bmatrix}
  \tilde \nu_i(t),\\
    \tilde\zeta_i(t)&=&\begin{bmatrix} 1& 1& 0\end{bmatrix}\tilde\xi_i(t).
  \end{IEEEeqnarraybox}
  \right.
\end{IEEEeqnarray*}
We compute an SSF-M$_2$ function $\hat V(\hat x,\tilde x)=[\hat x; \tilde x]^T \hat M[\hat x; \tilde x]$ from $\tilde
\Sigma=\mathcal{I}(\tilde\Sigma_{14},\tilde \Sigma_{23})$ to $\hat \Sigma$,
by solving an appropriate LMI. The matrix $\hat M$ results in
\begin{IEEEeqnarray*}{c}
\hat M
=
\left[
\begin{IEEEeqnarraybox}[][c]{r'r'r'r}
m_1& 0 & -m_2& 0\\
0 & m_1 & 0 &-m_2\\
-m_2^T & 0 & m_3& 0\\
0 &-m_2^T & 0& m_3
\end{IEEEeqnarraybox}
\right]
\end{IEEEeqnarray*}
with 
\begin{IEEEeqnarray*}{l}
m_1=
\begin{bmatrix}
  1.1400 &   1.3072 &   0.0052\\
  1.3072 &   1.6968 &   0.0228\\
  0.0052 &   0.0228 &   0.0104
\end{bmatrix},~
m_2=
\begin{bmatrix}
 1.1437&   1.3112&   0.0060\\
 1.3365&   1.7181&   0.0218\\
 0.0089&   0.0230&   0.0085
\end{bmatrix},~
m_3=
\begin{bmatrix}
   1.1793 &   1.3649 &   0.0081\\
   1.3649 &   1.7631 &   0.0224\\
   0.0081 &   0.0224 &   0.0079
\end{bmatrix}.
\end{IEEEeqnarray*}
The associated $\mathcal{K}_\infty$ functions for $\hat V$ are
given by \mbox{$\alpha(r)=r$}, \mbox{$\eta(r)=0.82r$}, $\rho_{\mathrm{ext}}(r)=0.32r$, and
$\rho_{\mathrm{int}}\equiv 0$. Again we use Theorem~\ref{theorem1} and Remark~\ref{bound} to establish
\begin{IEEEeqnarray}{c}\label{e:ex:bound1}
\EE[\Vert\hat\zeta_{\hat a\tilde \nu}(t)-\tilde \zeta_{\tilde a\tilde \nu}(t)\Vert^2]
\le
\mathsf{e}^{-0.82t}\EE[\hat V(\hat a,\tilde a)]
+0.4\Vert\tilde \nu\Vert_\infty^2.\IEEEeqnarraynumspace
\end{IEEEeqnarray}

Next we design a safety controller to restrict the
output $\tilde y\in\R$ of $\tilde \Sigma_i$, $i\in\{14,23\}$ to $\intcc{0~5}$.
Additionally, to control the mismatch between the trajectories of $\Sigma$ and
$\tilde \Sigma$, we limit the inputs to $\tilde u\in\intcc{-1~1}^2$.
We fix the
sampling time to $\tau=0.1\;$ secs and use the
MATLAB Toolbox MPT~\cite{MPT3} to compute a safety controller
$K:\R^6\to 2^{\intcc{-1~1}^2}$, which when applied in a sample-and-hold
manner to $\tilde\Sigma$ enforces the constraints at the sampling instances
$t=k\tau$, $k\in\N$. A part of the domain of the
controller, which restricts the initial states of $\tilde \Sigma$ is
illustrated in Figure~\ref{f:cdom}. Note that $K$ is a set-valued map that
provides, for each state $\tilde x$ in the domain of $K$, possibly a set of admissible inputs
$K(x)\subseteq \intcc{-1~1}^2$.
\begin{figure}
\centering
\includegraphics[width=9cm]{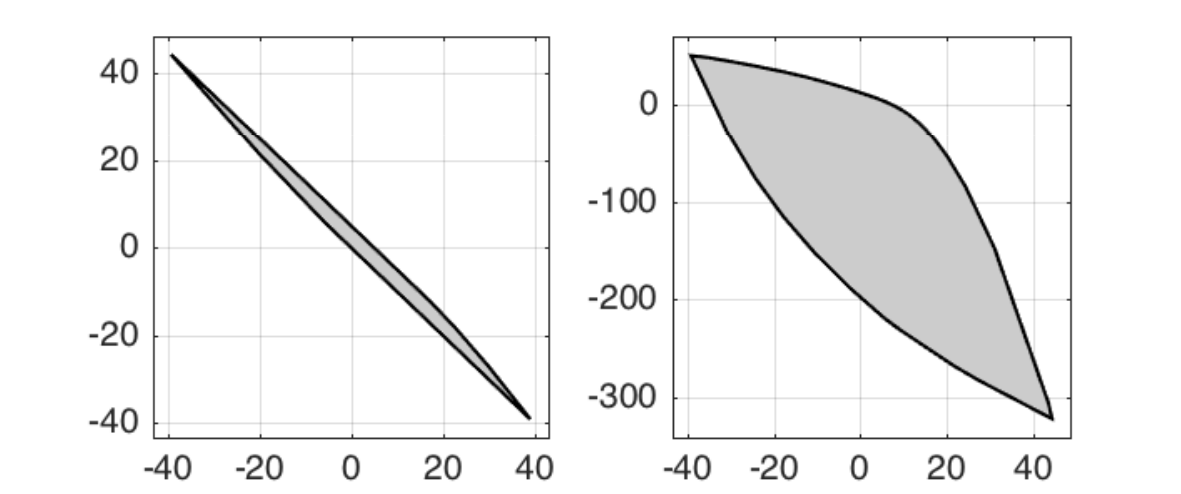}
\caption{Part of the domain of the safety controller. The left figure shows the projection
on $\tilde x_1$ and $\tilde x_2$. The right figure shows the projection on
$\tilde x_2$ and~$\tilde x_3$.}\label{f:cdom}
\end{figure}
%Recall that we would like to find a controller so that the outputs
%$\hat y_{14},\hat y_{23}\in\R$ are constrain (in the $1$st moment
%metric) to $\intcc{0,5}$.
%

\subsection{Input trajectory generation and performance guarantees}

We use the closed-loop system consisting of $\tilde \Sigma$ and $K$ to generate
input trajectories for $\Sigma$. Let $(\tilde\xi,\tilde\zeta,\tilde \nu)$ be a
trajectory of $\tilde \Sigma$ that satisfies $K$, i.e.,
$\tilde\nu$ is constant on the intervals $\tau\intco{k,(k+1)}$, $k\in\N$, and satisfies
$\tilde\nu(k\tau)\in K(\tilde\xi(k\tau))$ for all $k\in\N$. We use the interface~\eqref{e:lin:ex:interf} to compute the input
trajectory $\nu$ for $\Sigma$. Using the bounds in \eqref{e:ex:bound} and \eqref{e:ex:bound1}, the
overall estimate between output trajectories of $\tilde \Sigma$ and $\Sigma$ follows to
\begin{align}\notag
\left(\EE[\Vert\zeta_{a\nu}(t)-\tilde \zeta_{\tilde a\tilde \nu}(t)\Vert^2]\right)^{\frac{1}{2}}
&\le
\left(\EE[\Vert\zeta_{a\nu}(t)-\hat \zeta_{\hat a\tilde \nu}(t)\Vert^2]\right)^{\frac{1}{2}}+\left(\EE[\Vert\hat\zeta_{\hat a\tilde\nu}(t)-\tilde \zeta_{\tilde a\tilde \nu}(t)\Vert^2]\right)^{\frac{1}{2}}\\\label{final_bound}&\le
\mathsf{e}^{-0.67t}\EE[V(a,\hat a)]^{\frac{1}{2}}+\mathsf{e}^{-0.41t}\EE[\hat V(\hat a,\tilde a)]^{\frac{1}{2}}+\Vert\tilde \nu\Vert_\infty.
\end{align}

We show some
simulation results of the controlled system in
Figure~\ref{f:simres}. The initial state of $\Sigma$ is fixed as $a=\intcc{1;-1;-5;1;-1;-5;1;-2;1;-2}$. We determine
the initial state for $\hat \Sigma$ as well as $\tilde
\Sigma$ as the vector $\tilde a\in \R^6$ lying in the domain of the
controller and minimizing $V(a,\tilde a)$ which is $\tilde a=\intcc{1.44;-0.69;1.44;-0.69;1;1}$.
We randomly pick the input $\tilde
\nu(k\tau)$ in $K(\tilde \xi(k\tau))$. In the top two plots of the figure, we see a realization of the observed process $\zeta_1$ (resp. $\zeta_2$) and $\hat\zeta_1$ (resp. $\hat\zeta_2$) of $\Sigma$ and $\hat\Sigma$, respectively. On the middle plot, we show the corresponding evolutions of the refined input signals $\nu_1$ and $\nu_2$ for $\Sigma$. On the 2nd plot from bottom, we show the square root of the average value (over 1000 experiments) of the squared distance in time of the output trajectory of $\Sigma$ to the one of $\hat\Sigma$, namely, $\Vert\zeta_{a\nu}-\hat\zeta_{\tilde a\tilde\nu}\Vert^2$. The solid black curve denotes
the error bound given by the right-hand-side of \eqref{e:ex:bound}. On the bottom part, we show the square root of the average value (over 1000 experiments) of the squared distance in time of the output trajectory of $\Sigma$ to the set $S$, namely, $\Vert \zeta_{a\nu}(t)\Vert_S$.
Notice that the square root of this empirical
(averaged) squared distances is significantly lower than the computed bound given by the right-hand-side of~\eqref{final_bound}, as expected since the stochastic simulation functions can lead to conservative bounds. (One can improve the bounds by seeking
optimized stochastic simulation functions.)

\begin{figure}
\begin{center}
%\hspace{-0.2cm}
\begin{tikzpicture}
\node[inner sep=0] at (0,0) {\includegraphics[width=8cm]{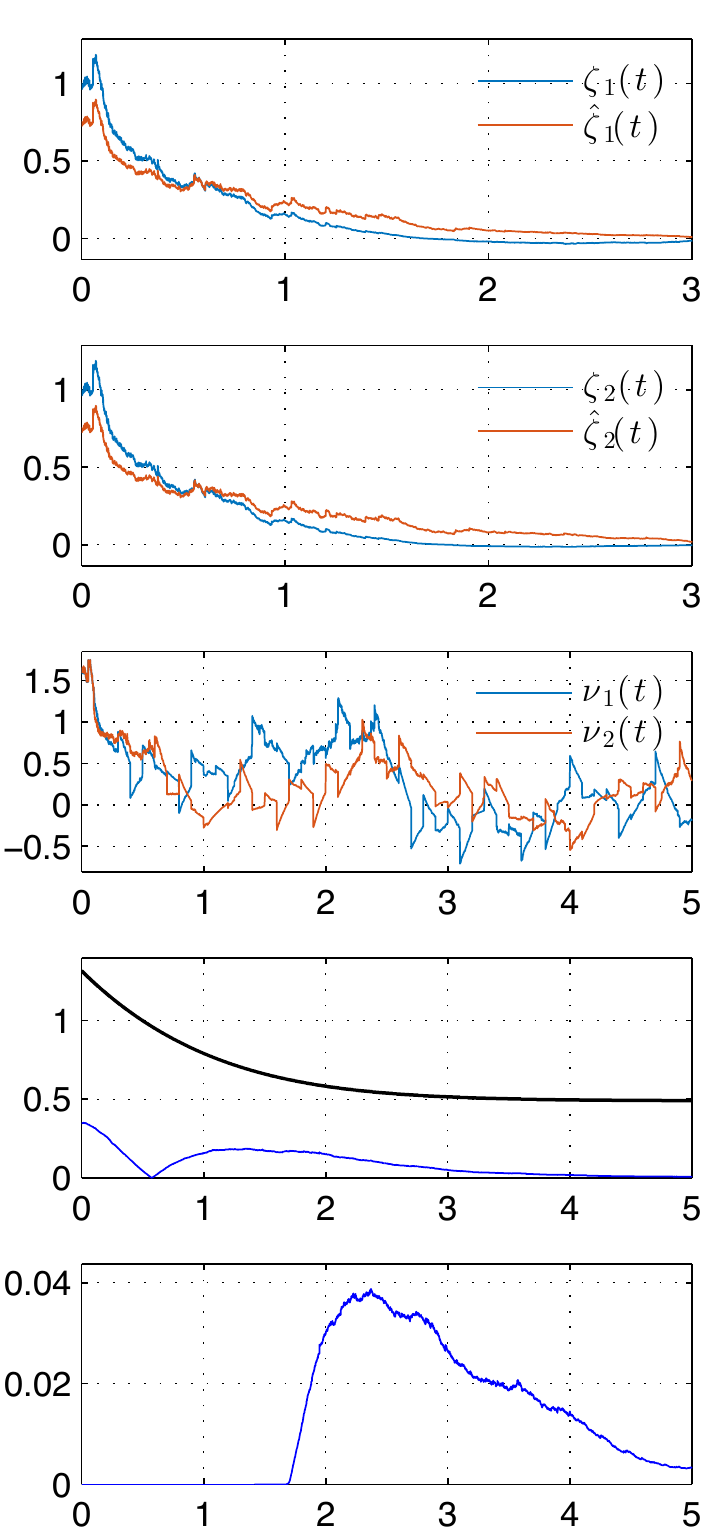}};
\node[rotate=90] at (-4.2,-3.5) {\small $\sqrt{\EE[\Vert\zeta(t)-\hat\zeta(t)\Vert^2]}$};
\node[rotate=90] at (-4.2,-7) {\small $\sqrt{\EE[\Vert\zeta(t)\Vert_S^2]}$};
\node[inner sep=0] at (.2,-8.9) {\small $t\;[\mathrm{sec}]$};
\end{tikzpicture}
\caption{Top two plots: One realization of $\zeta_1$ (resp. $\zeta_2$)
(\protect\tikz[baseline=-.5ex]{\protect\draw[very thick,MATLABblue] (0,0) -- (0.2,0);})
and $\hat \zeta_1$ (resp. $\hat \zeta_2$) (\protect\tikz[baseline=-.5ex]{\protect\draw[very thick,MATLABred] (0,0) -- (0.2,0);}). The middle plot: the corresponding realization of external inputs $\nu_1$
(\protect\tikz[baseline=-.5ex]{\protect\draw[very thick,MATLABblue] (0,0) -- (0.2,0);})
and $\nu_2$ (\protect\tikz[baseline=-.5ex]{\protect\draw[very thick,MATLABred] (0,0) -- (0.2,0);}) of $\Sigma$. The 2nd plot from bottom: Square root of the average values (over 1000 experiments)
of the squared distance of the output trajectory of $\Sigma$ to the one of $\hat\Sigma$. The solid black line indicates the error bound given by the
%computed stochastic simulation function $V$ as in \eqref{e:ex:bound}.}  \label{f:simres}
right-hand-side of~\eqref{e:ex:bound}. Bottom plot: Square root of the average values (over 1000 experiments)
of the squared distance of the output trajectory of $\Sigma$ to the safe set $S$.}  \label{f:simres}\end{center}
\end{figure}

\section{Summary}
In this paper we proposed a compositional framework for the construction of infinite approximations of interconnected stochastic hybrid systems by leveraging some small-gain type conditions. We introduced a new notion of stochastic simulation functions to quantify the error between the stochastic hybrid systems and their approximations. In comparison with the similar notion in \cite{julius1}, our proposed notion of stochastic simulation functions is computationally more tractable for stochastic hybrid systems with inputs. Moreover, we provided a constructive approach on the construction of those infinite approximations for a class of stochastic hybrid systems, namely, jump linear stochastic systems. Finally, we illustrated the effectiveness of the results by constructing an infinite approximation of an interconnection of four jump linear stochastic systems in a compositional manner. We employed the constructed
approximation as a substitute in the controller synthesis scheme to enforce a safety constraint on the concrete interconnected system, would not have been possible to enforce without the use of the approximation.

\section*{Appendix}
\begin{proof}[Proof of Lemma \ref{lem:KL}]
Lemma \ref{lem:KL} is an extension of Lemma 4.4 in \cite{LSW96} and the proof follows similar ideas. The proof includes two steps. We first show that the set $[0,s_0]$, $s_0 \Let \eta^{-1}(2g)$, is forward invariant, i.e., if $y(t_0) \in [0,s_0]$, then $y(t) \in [0,s_0]$ for all $t \ge t_0$. For the sake of contradiction, suppose the trajectory $y$ visits $[0,s_0]$ and then later leaves it. Due to the continuity of $y$, this implies that there exist a time instance $t> t_0$ and positive value $\eps >0$ such that $y(t_0) = s_0$ and $y(t) = s_0 + \eps$, and $y(\tau) \geq s_0$ for all $\tau \in [t_0,t]$. In view of the lemma hypothesis, we then have 
\begin{align*}
	0 < \eps = y(t) - y(t_0) \le \int_{t_0}^t [- \eta(y(\tau)) + g] \le 0, 
\end{align*}
which concludes the first step. In the second step, we assume that $y(0) > s_0$. Consider the function $\kappa: \R_{>0} \ra \R$ defined as
%\begin{align*}
%	\kappa(s) \Let \left\{\begin{array}{cc} 
%	\int^{s}_{1} {-\diff r \over \min\{\eta(r),r\}}, & s > 0 \\ 0 & s = 0.
%	\end{array}\right. %\min\{\eta(r) - g, r - s_0\}}  \diff r, 
%\end{align*}
\begin{align*}
	\kappa(s) \Let\int^{s}_{1} {-\diff r \over \min\{\eta(r),r\}}. %\min\{\eta(r) - g, r - s_0\}}  \diff r, 
\end{align*}
%where $\ind(s)$ is the indicator function. % and $s_0 \Let \eta^{-1}(g).$
Let $t_s$ be the first time that the process $y$ reaches $s_0$, i.e., $t_s \Let \inf\{t \ge 0 : y(t) \le s_0\}$.\footnote{By convention, $\inf \emptyset = \infty$.} In the following we show that the function 
\begin{align}\label{KLfunct}
	\vartheta(r,t) \Let \left\{\begin{array}{cc} 
	\kappa^{-1}\big( \kappa(r) + t/2\big), & r > 0 \\ 0 & r = 0,
	\end{array}\right. %\min\{\eta(r) - g, r - s_0\}}  \diff r, 
\end{align}
is indeed the desired $\KL$ function for the lemma assertion. Note that for all $t \in [0, t_s]$, we have $\eta\big(y(t)\big) \ge 2g$, and that we have
\begin{align*}
	\kappa\big(y(t)\big) -\kappa\big(y(0)\big) & = \int\limits_{y(0)}^{y(t)} {-\diff(y(\tau)) \over \min\{\eta\big(y(\tau)\big),y(\tau)\}} 
	 \ge \int\limits_{0}^{t} { \eta\big(y(\tau)\big) - g \over \min\{\eta\big(y(\tau)\big),y(\tau)\}}\diff \tau  
	 \ge \int\limits_{0}^{t} {{1\over 2} \eta\big(y(\tau)\big) \over\eta\big(y(\tau)\big)}\diff\tau \ge {t\over 2}.
	%& \ge \int\limits_{0}^{t} {[\eta\big(y(\tau)\big) - g ]\diff\tau \over \min\big\{\eta\big(y(\tau)\big) - g, y(\tau) - s_0\big\}} \ge t.
\end{align*}
The above observation together with the fact that the function $\kappa$ is strictly decreasing on $(0, \infty)$ imply that 
\begin{align*}
	y(t) \le \kappa^{-1}\Big(\kappa\big(y(0)\big) + t/2\Big), \qquad \forall t \in [0,t_s].
\end{align*}
%where we read the inverse function of $\kappa$ as 
%	$$\kappa^{-1}(x) \Let \sup \big\{s \ge s_0 : \kappa(s) \ge r \big\}.$$
%By definition of the inverse function as above, it is straightforward to inspect that the function $\kappa^{-1}\big( \kappa(r) + t\big)$ is non-decreasing in $r$ for each $t$, and decreasing in $t$ for each $r$. Thus, for any $\KL$ function $\eta$, the function $\vartheta(r,t)\Let\kappa^{-1}\big( \kappa(r) + t\big) + \eta(r,t)$ is a $\KL$ function. 
Note that $\lim_{s \da 0}\kappa(s) = \infty$, and since $\kappa$ is strictly decreasing on $(0,\infty)$, the function $\vartheta(r,t)$ defined in \eqref{KLfunct} is a $\Kinf$ function in the first argument for each $t$, and decreasing with respect to the second argument for each nonzero $r$. As such, the function $\vartheta(r,t)$ is a $\KL$ function. Combining the results of the two steps concerning the intervals $[0,t_s]$ and $(t_s,\infty)$ concludes the desired assertion. 
\end{proof}

\begin{proof}[Proof of Theorem \ref{theorem1}]
For any time instances $t \ge t_0 \ge 0$, any $\hat\nu(t)\in\R^{\hat m}$, any $\hat\omega(t)\in\R^p$, and any random variable $a$ and $\hat{a}$ that are $\sigalg_0$-measurable, there exists $\nu(t)\in\R^m$ such that for all $\omega(t)\in\R^p$, one obtains
%	\begin{small}
	\begin{align}
		\notag \EE &\left[ V(\xi_{a\nu\omega}(t),\hat\xi_{\hat a\hat\nu\hat\omega}(t)) \right]
		 = \EE \left[V\big(\xi_{a\nu\omega}(t_0),\hat\xi_{\hat a\hat\nu\hat\omega}(t_0)\big) + \int_{t_0}^{t} \mathcal{L} V(\xi_{a\nu\omega}(s),\hat\xi_{\hat a\hat\nu\hat\omega}(s))\diff s\right]
		\\ \notag & \le \EE \left[ V\big(\xi_{a\nu\omega}(t_0),\hat\xi_{\hat a\hat\nu\hat\omega}(t_0)\big)\right] 
		 +\EE\bigg[\int_{t_0}^{t} -\eta\Big( V\big(\xi_{a\nu\omega}(s),\hat\xi_{\hat a\hat\nu\hat\omega}(s)\big)\Big)
		+ \rho_{\mathrm{ext}}(\Vert \hat\nu(s)\Vert^k)+\rho_{\mathrm{int}}(\Vert \omega(s)-\hat \omega(s)\Vert^k) \diff s \bigg]
		\\ \notag & \le \EE \left[ V\big(\xi_{a\nu\omega}(t_0),\hat\xi_{\hat a\hat\nu\hat\omega}(t_0)\big)\right] 
		+\int_{t_0}^{t} -\eta\Big(\EE\left[ V\big(\xi_{a\nu\omega}(s),\hat\xi_{\hat a\hat\nu\hat\omega}(s)\big)\right] \Big)
		+\EE\Big[\rho_{\mathrm{ext}}(\Vert \hat\nu\Vert^k_\infty)+\rho_{\mathrm{int}}(\Vert \omega-\hat \omega\Vert^k_\infty)\Big]\diff s,
	\end{align}
%	\end{small}
	where the first equality is an application of the It\^{o}'s formula for jump diffusions thanks to the polynomial rate of the function $V$ \cite[Theorem 1.24]{ref:Oks-jump}, and the last inequality follows from Jensen's inequality due to the convexity assumption on the function $\eta$ \cite[p. 310]{oksendal}. Let us define the process $y(t) \Let  \EE\big[V(\xi_{a\nu\omega}(t),\hat\xi_{\hat a\hat\nu\hat\omega}(t))\big]$. Note that in view of the It\^{o}'s formula, the process $y(\cdot)$ is continuous provided that the solution processes $\xi_{a\nu\omega}$ and $\hat\xi_{\hat a\hat\nu\hat\omega}$ have finite moments. This is indeed the case under our model setting in Definition \ref{Def_control_sys}, in particular due to the Lipschitz continuity of functions $f, \sigma, r,\hat f, \hat\sigma,\hat r$ \cite[1.19]{ref:Oks-jump}.
%due to our model setting in Definition \ref{Def_control_sys}, in particular Lipschitz continuity of functions $f, \sigma, r$, all the moments of the solution process $\xi_{a\nu\omega}(t)$ are bounded \cite[]{continuity}. As such, inequality \eqref{ineq} indicates that the process $y(t)$ as defined above is also continuous. 
Therefore, the process $y(t)$ meets all the required assumptions of Lemma~\ref{lem:KL}, implying that there exists a $\KL$ function $\vartheta$ such that 
%\begin{small}
\begin{align}
	\label{ineq2} \EE[ V(\xi_{a\nu\omega}(t),&\hat\xi_{\hat a\hat\nu\hat\omega}(t)) ]\le \vartheta\big(\EE[V(a,\hat a)], t\big) 
	+ \eta^{-1}\Big(2\EE\big[\rho_{\mathrm{ext}}(\Vert \hat\nu\Vert^k_\infty)+\rho_{\mathrm{int}}(\Vert \omega-\hat \omega\Vert^k_\infty)\big]\Big).
\end{align}
%\end{small}
	In view of Jensen's inequity and using equation \eqref{e:sf:1}, the convexity of $\alpha$ and the concavity of $\rho_{\mathrm{ext}},\rho_{\mathrm{int}}$, we have	
%	\begin{small}
	\begin{align}\notag
		{\alpha}\Big(\EE \big[ \|\zeta_{a\nu\omega}(t)-\hat\zeta_{\hat a\hat\nu\hat\omega}(t)\|^k\big]\Big)  &\le \EE \left[ {\alpha}\left(\| \zeta_{a\nu\omega}(t)-\hat\zeta_{\hat a\hat\nu\hat\omega}(t)\|^k\right) \right] 
		 \le\EE \left[ V(\xi_{a\nu\omega}(t),\hat\xi_{\hat a\hat\nu\hat\omega}(t))\right]\\\notag &\le \vartheta\big(\EE[V(a,\hat a)], t\big)+
		 \eta^{-1}\Big(2\rho_{\mathrm{ext}}\big( \EE[\Vert \hat\nu\Vert^k_\infty]\big) + 2\rho_{\mathrm{int}}\big(\EE[\Vert \omega-\hat \omega\Vert^k_\infty]\big)\Big),
	\end{align}
%	\end{small}
	which in conjunction with the fact that $\alpha \in \Kinf$ leads to
%	\begin{small}
	\begin{align*}
	\EE & \left[ \|\zeta_{a\nu\omega}(t)-\hat\zeta_{\hat a\hat\nu\hat\omega}(t)\|^k\right]
	\le{\alpha}^{-1} \bigg(\vartheta\big(\EE[V(a,\hat a)], t\big) + 
	\eta^{-1}\Big(2\rho_{\mathrm{ext}}\big( \EE[\Vert \hat\nu\Vert^k_\infty]\big) + 2\rho_{\mathrm{int}}\big(\EE[\Vert \omega-\hat \omega\Vert^k_\infty]\big)\Big)\bigg) 
	\\ & \le \alpha^{-1} \Big(2\vartheta\big(\EE[V(a,\hat a)], t\big) \Big) + \alpha^{-1}\Big(2\eta^{-1}\big(4\rho_{\mathrm{ext}}\big( \EE[\Vert \hat\nu\Vert^k_\infty]\big)\Big) 
	 + \alpha^{-1}\Big(2\eta^{-1}\big(4\rho_{\mathrm{int}}\big(\EE[\Vert \omega-\hat \omega\Vert^k_\infty]\big)\big)\Big).
	\end{align*}
%	\end{small}
	Therefore, by introducing functions $\beta$, $\gamma_{\mathrm{ext}}$, and $\gamma_{\mathrm{int}}$ as
%	\begin{small}
	\begin{align}\notag
		\beta(r,t) &\Let {\alpha}^{-1}\big(2\vartheta(r,t)\big), 
		\\ \label{com-fn} \gamma_{\mathrm{ext}}(r) &\Let {\alpha}^{-1}\Big(2\eta^{-1}\big(4\rho_{\mathrm{ext}}(r)\big)\Big),
		\\ \notag \gamma_{\mathrm{int}}(r) &\Let {\alpha}^{-1}\Big(2\eta^{-1}\big(4\rho_{\mathrm{int}}(r)\big)\Big),
	\end{align}
%	\end{small}
	inequality (\ref{inequality}) is satisfied. Note that if $\alpha^{-1}$ and $\eta^{-1}$ satisfies the triangle inequality (i.e., $\alpha^{-1}(a+b)\leq\alpha^{-1}(a)+\alpha^{-1}(b)$ and $\eta^{-1}(a+b)\leq\eta^{-1}(a)+\eta^{-1}(b)$ for all $ a,b \in \R_{\geq0}$), one can divide all the coefficients by factor $2$ in the expressions of $\beta$, $\gamma_{\mathrm{ext}}$, and $\gamma_{\mathrm{int}}$ in \eqref{com-fn} to get a less conservative upper bound in \eqref{inequality}.
\end{proof}

\begin{proof}[Proof of Proposition \ref{lemma6}]
Since $V$ is an SSF-M$_k$ function from $\hat\Sigma$ to $\Sigma$ and $\eta(r)\geq\theta r$ for some $\theta\in\R_{>0}$ and any $r\in\R_{\geq0}$, for any
$\hat\nu\in\hat{\mathcal{U}}$, any $\hat \omega \in\mathcal{W}$, and any random variable $a$ and $\hat{a}$ that are $\sigalg_0$-measurable, there exists $\nu\in{\mathcal{U}}$ such that for all $\omega\in\mathcal{W}$ one obtains:
%\begin{small}
\begin{align*}
	\mathcal{L} V\left(\xi_{a\nu\omega}(t), \hat \xi_{\hat a\hat \nu\hat\omega}(t)\right) \leq&-\theta V\left(\xi_{a\nu\omega}(t), \hat \xi_{\hat a\hat \nu\hat\omega}(t)\right)+\rho_{\mathrm{ext}}(\Vert\hat \nu\Vert^k_\infty)+\rho_{\mathrm{int}}(\Vert \omega-\hat \omega\Vert^k_\infty).
\end{align*}
%\end{small} 
Since there exists a constant $\epsilon\geq0$ such that $\epsilon\geq\rho_{\mathrm{ext}}(\Vert\hat \nu\Vert^k_\infty)+\rho_{\mathrm{int}}(\Vert \omega-\hat \omega\Vert^k_\infty)$, one obtains:
%\begin{small}
\begin{align}\label{inequality10} 
	\mathcal{L} V\left(\xi_{a\nu\omega}(t), \hat \xi_{\hat a\hat \nu\hat\omega}(t)\right) \leq&-\theta V\left(\xi_{a\nu\omega}(t), \hat \xi_{\hat a\hat \nu\hat\omega}(t)\right)+\epsilon,
\end{align}
%\end{small} 
and the following chain of inequalities hold:
%\begin{small}
\begin{align}\nonumber
\PP\left\{\sup_{0\leq t\leq T}\left\Vert\zeta_{a\nu\omega}(t)-\hat \zeta_{\hat a\hat \nu\hat\omega}(t)\right\Vert\geq\varepsilon\,\,|\,\,[a;\hat a]\right\}=&\PP\left\{\sup_{0\leq t\leq T}\alpha\left(\left\Vert\zeta_{a\nu\omega}(t)-\hat \zeta_{\hat a\hat \nu\hat\omega}(t)\right\Vert^k\right)\geq\alpha(\varepsilon^k)\,\,|\,\,[a;\hat a]\right\}\\\label{final_probability}
\leq&\PP\left\{\sup_{0\leq t\leq T}V\left(\xi_{a\nu\omega}(t),\hat \xi_{\hat a\hat \nu\hat\omega}(t)\right)\geq\alpha(\varepsilon^k)\,\,|\,\,[a;\hat a]\right\}.
\end{align}
%\end{small}
Using inequalities \eqref{inequality10}, \eqref{final_probability}, and Theorem 1 in \cite[Chapter III]{kushner}, one obtains the inequalities (\ref{simultaneous1}) and (\ref{simultaneous2}).
\end{proof}

\begin{proof}[Proof of Proposition \ref{lemma7}]
The proof is a simple consequence of Theorem \ref{theorem1} and Markov inequality \cite{oksendal}, used as the following:
\begin{align}
\PP\left\{\Vert\zeta_{a\nu\omega}(t)-\hat \zeta_{\hat a\hat \nu\hat\omega}(t)\Vert\geq\varepsilon\right\}&\leq\frac{\EE[\Vert\zeta_{a\nu\omega}(t)-\hat \zeta_{\hat a\hat \nu\hat\omega}(t)\Vert]}{\varepsilon}\leq\frac{\left(\EE\left[\Vert\zeta_{a\nu\omega}(t)-\hat \zeta_{\hat a\hat \nu\hat\omega}(t)\Vert^k\right]\right)^{\frac{1}{k}}}{\varepsilon}\\\notag\leq&\frac{\left(\beta\left(\EE[V(a,\hat a)],t\right )+ \gamma_{\mathrm{ext}}(\EE[\Vert\hat \nu\Vert^k_\infty]) + \gamma_{\mathrm{int}}(\EE[\Vert\omega-\hat \omega\Vert^k_\infty])\right)^{\frac{1}{k}}}{\varepsilon}.
\end{align}
\end{proof}

\begin{proof}[Proof of Corollary \ref{lemma8}]
Since $V$ is an SSF-M$_k$ function from $\hat\Sigma$ to $\Sigma$, for
$\hat\nu\equiv0$ and any random variable $a$ and $\hat{a}$ that are $\sigalg_0$-measurable, there exists $\nu\in{\mathcal{U}}$ such that one obtains:
%\begin{small}
\begin{align}\notag
	\mathcal{L} V\left(\xi_{a\nu}(t), \hat \xi_{\hat a0}(t)\right) \leq&-\eta \left(V\left(\xi_{a\nu}(t), \hat \xi_{\hat a0}(t)\right)\right),
\end{align}
%\end{small}
implying that $V\left(\xi_{a\nu}(t), \hat \xi_{\hat a0 }(t)\right)$ is a nonnegative supermartingale \cite[Appendix C]{oksendal}. As a result, we have the
following chain of inequalities:
%\begin{small}
\begin{align}\nonumber
\PP\left\{\sup_{0\leq t< \infty}\left\Vert\zeta_{a\nu}(t)-\hat \zeta_{\hat a0 }(t)\right\Vert>\varepsilon\,\,|\,\,[a;\hat a]\right\}=&\PP\left\{\sup_{0\leq t< \infty}\alpha\left(\left\Vert\zeta_{a\nu}(t)-\hat \zeta_{\hat a0 }(t)\right\Vert^k\right)>\alpha(\varepsilon^k)\,\,|\,\,[a;\hat a]\right\}\\\notag\leq
&\PP\left\{\sup_{0\leq t< \infty}V\left(\xi_{a\nu}(t),\hat \xi_{\hat a0 }(t)\right)>\alpha(\varepsilon^k)\,\,|\,\,[a;\hat a]\right\}\leq\frac{V(a,\hat a)}{\alpha(\varepsilon^k)},
\end{align}
%\end{small}
where the last inequality is implied from $V(\xi_{a\nu}(t),\hat \xi_{\hat a0 }(t))$ being a nonnegative supermartingale and \cite[Lemma1]{kushner}.
\end{proof}

\begin{proof}[Proof of Lemma \ref{lemma111}]
Consider the jump linear stochastic system $\Sigma$ with a linear feedback control law $u=Kx$, where $K\in\R^{m\times n}$, satisfying
\begin{equation}\notag
	\diff \xi(t)=(A+BK)\xi(t)\diff t+E\xi(t)\diff W_t + \sum_{i=1}^{\widetilde{q}}R_i\xi(t)\diff P^i_t. 	
\end{equation}
Define the matrix-valued deterministic process $\Phi(t) \Let \EE[\xi(t)\xi^T(t)]$. Applying the It\^{o}'s formula for jump diffusions \cite{ref:Oks-jump} leads to the following differential equations describing the time-evolution of the deterministic process $\Phi(t)$:
%\begin{small}
\begin{align}
\label{M}
&\dot{\Phi}(t) = \big(A+BK + \sum_{i=1}^{\widetilde{q}}\lambda_i R_i\big) \Phi(t) +  \Phi(t)\big(A +BK+ \sum_{i=1}^{\widetilde{q}}\lambda_i R_i \big)^T  + E\Phi(t)E^T + \sum_{i=1}^{\widetilde{q}}\lambda_i R_i \Phi(t)R_i^T.
\end{align}
%\end{small}
To see further details on how the above ODE is derived, one can view each element of the matrix $\Phi(t)$ as an $\R$-valued mapping and treat it in the same way as we considered the Lyapunov function in the proof of Theorem~\ref{t:lin:suf}, and consequently arrives at \eqref{M}. From linear system theory, one can readily check that the ODE in \eqref{M} is asymptotically stable (implying $\Sigma$ is mean square asymptotically stable) if and only if $V(\Phi(t))=\text{Tr}(M\Phi(t))=\EE[\xi(t)^TM\xi(t)]$ is a Lyapunov function for \eqref{M} for a positive definite matrix $M$ satisfying condition \eqref{e:lin:con11}, which completes the proof.
\end{proof}

\begin{proof}[Proof of Lemma \ref{l:lin:con:DS}]
Suppose that $\im D\not\subseteq \im P+\im B$, then there exists $w\in
\R^p$ so that $Dw\neq P\hat x-B u$ holds for all $\hat x\in\R^{\hat
n}$, $u\in \R^m$. Hence~\eqref{e:lin:con2b} cannot hold for any
matrix $\hat D$ and $S$. Now suppose $\im D\subseteq \im P+\im B$. Let
$e_i$ denote the columns of $I_{p}$. Then there exist $\hat d_i\in\R^{\hat
n}$ and $s_i\in \R^m$ so that $De_i=P\hat d_i-Bs_i$ holds for all
$i\in\{1,\ldots,p\}$ and the matrices $\hat D=[\hat d_1\;\ldots\;\hat
d_p]$ and $S=[s_1\;\ldots\; s_p]$ satisfy~\eqref{e:lin:con2b}.
\end{proof}

\bibliographystyle{alpha}
\bibliography{refs,reference,ref_pey}

\end{document}